\newcolumntype{L}[1]{>{\raggedright\let\newline\\\arraybackslash\hspace{0pt}}m{#1}}
\newcolumntype{C}[1]{>{\centering\let\newline\\\arraybackslash\hspace{0pt}}m{#1}}
\newcolumntype{R}[1]{>{\raggedleft\let\newline\\\arraybackslash\hspace{0pt}}m{#1}}
\newtheorem{Theorem}{Theorem}[section]
\newtheorem{Proposition}[Theorem]{Proposition}
\newtheorem{Remark}[Theorem]{Remark}
\newtheorem{Lemma}[Theorem]{Lemma}
\newtheorem{Corollary}[Theorem]{Corollary}
\newtheorem{Definition}[Theorem]{Definition}
\newtheorem{Example}[Theorem]{Example}
\let\expandafter\oldproof\csname\string\proof\endcsname
\let\oldendproof\endproof
\renewenvironment{proof}[1][\proofname]{
\oldproof[\ttfamily\scshape \bf #1.]
}{\oldendproof}
\def\ve{\varepsilon}
\def\tilde{\widetilde}
\def\emp{\emptyset}
\def\dist{{\rm dist}}
\def\dom{{\rm dom}\,}
\def\min{\mbox{\rm minimize}}
\def\B{\mathbb B}
\def\ox{\overline{x}}
\def\oy{\overline{y}}
\def\oz{\overline{z}}
\def\disp{\displaystyle}
\def\tto{\rightrightarrows}
\def\Hat{\widehat}
\def\Bar{\overline}
\def\ra{\rangle}
\def\la{\langle}
\def\ve{\varepsilon}
\def\epsilon{\varepsilon}
\def\ox{\bar{x}}
\def\oy{\bar{y}}
\def\oz{\bar{z}}
\def\ov{\bar{v}}
\def\gph{\mbox{\rm gph}\,}
\def\dom{\mbox{\rm dom}\,}
\def\ker{\mbox{\rm ker}\,}
\def\dn{\downarrow}
\def\O{\Omega}
\def\ph{\varphi}
\def\emp{\emptyset}
\def\oR{\Bar{\R}}
\def\lm{\lambda}
\def\al{\alpha}
\def\Th{\Theta}
\def \N{{\rm I\!N}}
\def \R{{\rm I\!R}}
\def\Limsup{\mathop{{\rm Lim}\,{\rm sup}}}
\def\Limsup{\mathop{{\rm Lim}\,{\rm sup}}}
\numberwithin{equation}{section}
\begin{document}

\title{\bf Second-Order Subdifferential Optimality Conditions\\ in  Nonsmooth Optimization}
\author{Pham Duy Khanh\footnote{Department of Mathematics, Ho Chi Minh City University of Education, Ho Chi Minh City, Vietnam. E-mail: khanhpd@hcmue.edu.vn.} \quad Vu Vinh Huy Khoa\footnote{Department of Mathematics, Wayne State University, Detroit, Michigan, USA. E-mail: khoavu@wayne.edu. Research of this author was partly supported by the US National Science Foundation under grant DMS-2204519.}\quad Boris S. Mordukhovich\footnote{Department of Mathematics, Wayne State University, Detroit, Michigan, USA. E-mail: aa1086@wayne.edu. Research of this author was partly supported by the US National Science Foundation under grant DMS-2204519, by the Australian Research Council under Discovery Project DP-190100555, and by Project~111 of China under grant D21024.}\quad Vo Thanh Phat\footnote{Department of Mathematics and Statistics,  University of North Dakota, Grand Forks, North Dakota, USA. E-mail: thanh.vo.1@und.edu. Research of this author was partly supported by the US National Science Foundation under grant DMS-2204519.}}
\maketitle
\vspace*{-0.2in}
\begin{quote}
{\small\noindent {\bf Abstract.} The paper is devoted to deriving novel second-order necessary and sufficient optimality conditions for local minimizers in rather general classes of nonsmooth unconstrained and constrained optimization problems in finite-dimensional spaces. The established  conditions are expressed in terms of second-order subdifferentials of lower semicontinuous functions and mainly concern prox-regular objectives that cover a large territory in nonsmooth optimization and its applications. Our tools are based on the machinery of variational analysis and second-order generalized differentiation. The obtained general results are applied to problems of nonlinear programming, where the derived second-order optimality conditions are new even for problems with twice continuously differential data, being expressed there in terms of the classical Hessian matrices.\\\vspace*{0.03in}\noindent
{\bf Keywords.} variational analysis and nonsmooth optimization, second-order optimality conditions, second-order subdifferentials, variational convexity, nonlinear programming\\\vspace*{0.03in}\noindent
{\bf Mathematics Subject Classification (2010).}\ 49J52, 49J53, 90C30}
\end{quote}

\section{Introduction}\label{intro}\vspace*{-0.05in}

It has been well recognized in optimization and variational analysis that second-order optimality conditions (both necessary and sufficient) play a highly important role in qualitative aspects of optimization as well as in the design and justification of numerical algorithms. The derivation of such conditions in problems of {\em second-order nonsmooth optimization} (by which we understand those where at least one of the involved functions is not twice continuously differentiable) naturally requires appropriate notions of second-order generalized differentiation and the machinery of  second-order variational analysis.

In this paper, we explore for these purposes {\em second-order subdifferentials} (or {\em generalized Hessians}) of lower semicontinuous (l.s.c.) functions initiated by Mordukhovich \cite{Mor92} in the general extended-real-valued framework. These constructions have been largely studied and utilized in numerous aspects of variational analysis, variational stability, optimization, control, and various applications; see, e.g., the books \cite{Mordukhovich06,Mor18,Mor24} and the references therein, where the reader can find, in particular, comprehensive calculus rules and explicit computations of second-order subdifferentials and related constructions for broad classes of extended-real-valued functions that overwhelmingly arise in optimization.

A prominent role of second-order subdifferentials was revealed in the study of {\em tilt-stable local minimizers}, the notion introduced by Poliquin and Rockafellar \cite{Poli} in the framework of unconstrained extended-real-valued optimization, where the basic second-order subdifferential of \cite{Mor92} was employed to establish {\em pointbased} (i.e., formulated {\em at} the point in question) {\em characterizations} of tilt-stable minimizers for the large class of prox-regular and subdifferentially continuous functions. Using second-order subdifferential calculus rules and other techniques of second-order variational analysis, both pointbased and {\em neighborhood} (i.e., involving points {\em nearby} the reference one)  characterizations of tilt-stable local minimizers were established for various classes of problems in constrained optimization; see \cite{bgm,dima,gm15,lewis,Mor24,mnghia-tilt13,MorduNghia,MorOu13,MorduHector,mor-roc,jafar,zheng} among many publications in this direction. Furthermore, the obtained characterizations of tilt stability laid the foundation for the design and justification of generalized Newtonian methods to solve nonsmooth optimization and related problems by using second-order subdifferentials; see the recent developments in  \cite{BorisKhanhPhat,kmptjogo,kmptmp,Mor24,BorisEbrahim}.

Besides applications to tilt-stable minimizers, second-order subdifferentials have been instrumental to study arbitrary local minimizers in some special classes of nonsmooth optimization problems. Using a symmetric version of the basic second-order subdifferential, Huy and Tuyen \cite{huytuyen16} derived pointbased no-gap necessary and sufficient conditions for local solutions to unconstrained problems of minimizing objectives of class ${\cal C}^{1,1}$, i.e., continuously differentiable functions with locally Lipschitzian gradients. Neighborhood conditions in terms of some modifications of the basic second-order subdifferential were obtained by Chieu et al.\ \cite{ChieuLeeYen17} for unconstrained problems with ${\cal C}^{1,1}$ objectives and also for merely continuously differentiable (${\cal C}^1$-smooth) functions under additional assumptions. Some extensions of these results to optimization problems with specific types of constraints were given in \cite{anyenxu23,anyen21}.
\vspace*{0.03in}

The aim of this paper is to derive new second-order subdifferential necessary and sufficient conditions for local minimizers and its strong counterparts in various formats of nonsmooth unconstrained and constrained optimization. First we consider minimization problems written in the {\em unconstrained extended-real-valued} form. In contrast to the previous developments in \cite{ChieuLeeYen17,huytuyen16} for intrinsically unconstrained problems with smooth data, we now address problems with extended-real-valued {\em prox-regular} objectives, which cover a much broader class of optimization problems and allow us to implicitly incorporate {\em geometric constraints} as in \cite{anyenxu23,anyen21} via the domain sets of the corresponding objectives. For the general class of prox-regular functions without any additional assumptions, we establish a {\em second-order necessary optimality condition} expressed in the {\em pointbased} form via the so-called combined version of the basic second-order subdifferential; see below. To derive next its {\em no-gap neighborhood} counterpart as a {\em second-order sufficient optimality condition}, we first obtain a new {\em characterization} of {\em variational convexity} of extended-real-valued functions, the property of its own great importance as documented in \cite{r19} and \cite{KKMP23-2,kmp22convex} which yields the sufficiency for local minimizers. Then we consider nonsmooth optimization problems with {\em explicit functional-geometric constraints} including, in particular, problems of {\em conic programming}. The usage of second-order subdifferential calculus rules allows us to establish no-gap second-order necessary and sufficient optimality conditions under fairly mild constraint qualifications. Finally, the paper develops constructive applications of the obtained second-order subdifferential conditions to problems of {\em nonlinear programming} (NLP), where we arrive at new results even in the classical NLP framework with twice continuously differentiable (${\cal C}^2$-smooth) data.\vspace*{0.03in}
 
The remainder of the paper is structured as follows. Section~\ref{sec:prelim} provides an overview of some key concepts in variational analysis and generalized differentiation, which are widely employed in the formulations and proofs of our major results. In Section~\ref{sec:moreau}, we establish the equivalence between local minimizers of a given l.s.c.\ function and local minimizers of its Moreau envelope. The main result of Section~\ref{sec:optiforprox} presents a novel necessary second-order subdifferential condition for local minimizers of extended-real-valued prox-regular functions. Section~\ref{sec:vc} provides a coderivative characterization of variational convexity for l.s.c.\ functions and applies it to deriving a second-order subdifferential sufficient optimality condition. In  Section~\ref{sec:strong}, we obtain counterparts of the above second-order optimality conditions for the case of strong minimizers. Sections~\ref{sec:optiforconstrained} and \ref{sec:2ndsuf} are devoted, respectively, to establishing second-order subdifferential optimality conditions for local and strong local minimizers of problems with functional-geometric constraints. Their further specifications and applications to nonlinear programming are given in Section~\ref{sec:appnonl}. Finally, Section~\ref{sec:conclusion} serves as a conclusion with summarizing the major findings of the paper and listing significant topics of our future research.\vspace*{-0.1in}

\section{Preliminaries from Variational Analysis}\label{sec:prelim}\vspace*{-0.05in}

In this section, we review some background information from variational analysis and first-order generalized differentiation in finite-dimensional spaces used in what follows. Proofs and related material can be found in the books \cite{Mor18,Rockafellar98} and the references therein. Recall that $B_\ve(\ox)$  and $\B_\ve(\ox)$ stand for the open and closed balls around $\ox$ with radius $\ve>0$, respectively, and that $\N:=\{1,2,\ldots\}$.

Let $F\colon\R^n\tto\R^m$ be a set-valued mapping/multifunction between finite-dimensional Euclidean spaces with its {\em graph} and {\em kernel} defined, respectively, by
$$
\text{gph}\,F:=\big\{(x,y)\in\R^n\times\R^m\;\big|\;y\in F(x)\big\}\;\mbox{ and }\;\ker F:=\big\{x\in\R^n\;\big|\;0\in F(x)\big\}.
$$
The {\em inverse} mapping $F^{-1}\colon\R^m\tto\R^n$ of $F$ is  $F^{-1}(y):=\{x\in\R^n\;|\;y\in F(x)\big\}$, and the
(Painlev\'e-Kuratowski) \textit{outer limit} of $F$ as $x\rightarrow\bar{x}$ is given by
\begin{equation}\label{outer}
\underset{x\rightarrow\bar{x}}{\Limsup}\;F(x):
=\big\{y\in \R ^n\;\big|\,\exists \mbox{ sequences } x_k\to \bar x,\ y_k\rightarrow y\;\mbox {with }\;y_k\in F(x_k),\;k\in\N\big
\}.
\end{equation}

Let $\varphi:\R ^n\rightarrow\overline{\R }:=(-\infty,\infty]$ be a proper extended-real-valued function with the domain $\dom\ph:=\{x\in\R^n\;|\;\ph(x)<\infty\}\ne\emp$. For $x\in\dom\ph$, define the (Fr\'echet) {\em regular subdifferential} of $\varphi$ at $x$ by
\begin{equation}\label{FrechetSubdifferential}
\Hat\partial\ph(x):=\Big\{v\in\R^n\;\Big|\liminf_{u\to x}\;\frac{\ph(u)-\ph(x)-\la v,u-x\ra}{\|u-x\|}\ge 0\Big\}
\end{equation}
with $\Hat\partial\ph(x):=\emp$ at $x\notin\dom\ph$, and the (Mordukhovich) {\em limiting subdifferential} of $\varphi$ at $\bar{x}\in\dom\ph$ by
\begin{equation}\label{MordukhovichSubdifferential}
\partial \varphi(\bar{x}):= \underset{x \overset{\varphi}{\to} \bar{x}} {\Limsup} \; \widehat{\partial}\varphi(x),
\end{equation}
where $x \overset{\varphi}{\to} \bar{x}$ means that $x\rightarrow\bar{x}$ with $\varphi(x)\rightarrow \varphi(\bar{x})$. Both regular and limiting subdifferentials of $\ph$ reduce to the classical gradient $\nabla\ph(\ox)$ of ${\cal C}^1$-smooth functions and to the classical subdifferential of convex analysis if $\ph$ is convex.

Given further a set $\Omega\subset\R ^n$ with its indicator function $\delta_\Omega(x)$ equal to $0$ for $x\in\Omega$ and to $\infty$ otherwise, the regular and the limiting \textit{normal cones} to $\Omega$ at $\bar{x}\in\Omega$ are defined via the corresponding subdifferentials \eqref{FrechetSubdifferential} and \eqref{MordukhovichSubdifferential} by, respectively,
\begin{equation}\label{NormalCones}
\widehat{N}_\Omega(\bar{x}):=\widehat{\partial}\delta_\Omega(\bar{x})
\quad\text{and}\quad
N_\Omega(\bar{x}):=\partial\delta_\Omega(\bar{x}).
\end{equation}
The	(Bouligand-Severi) {\em tangent/contingent cone} to $\Omega$ at $\ox$ is
\begin{equation}\label{tan}
T_\Omega(\ox):=\big\{w\in\R^n\;\big|\;\exists\,t_k\dn 0,\;w_k\to w\;\mbox{ as }\;k\to\infty\;\mbox{ with }\;\ox+t_k w_k\in\Omega\big\}.
\end{equation}
Note the duality correspondence $\Hat N_\O(\ox)=T_\O(\ox)^*$ between the regular normal and tangent cones. Observe to this end that the limiting normal cone in \eqref{NormalCones} cannot be tangentially generated due to its intrinsic nonconvexity.
The \textit{critical cone} to $\Omega$ at $(\ox,\ov)\in \gph {N}_\Omega$ is defined by
\begin{equation}\label{criticalconeK}
K_\Omega(\ox,\ov):=T_\Omega(\ox)\cap \{\ov\}^\perp.
\end{equation}

For a set-valued mapping $F\colon\R^n\tto\R^m$, the regular and limiting {\em coderivatives} of $F$ at $(\bar{x},\bar{y})\in\text{gph}\,F$ are defined via corresponding normal cones \eqref{NormalCones} by, respectively,
\begin{equation}\label{rcod}
\widehat D^*F(\bar{x},\bar{y})(u): =\big\{ v\in {\R ^n}\;\big|\;({v}, - {u}) \in \widehat{N}_{\text{gph}F}(\bar{x},\bar{y})\big\},
\end{equation}
\begin{equation}\label{lcod}
D^*F(\bar{x},\bar{y})(u) : =\big\{ v \in {\R ^n}\;\big|\;({v}, - {u}) \in {N}_{\text{gph}F}\,(\bar{x},\bar{y})\big\}
\end{equation}
for all $u\in\R^m$. We omit $\bar{y}$ in \eqref{rcod} and \eqref{lcod} if $F$ is single-valued. Let us mention that, despite their nonconvexity, the limiting constructions in \eqref{MordukhovichSubdifferential}, \eqref{NormalCones}, and \eqref{lcod} enjoy comprehensive {\em calculus rules} based on {\em variational/extremal principles} of variational analysis. On the other hand, the regular constructions in \eqref{FrechetSubdifferential}, \eqref{NormalCones}, and \eqref{rcod}, which are usually easier to compute, can be viewed by \eqref{outer} as efficient approximation tools to deal with the limiting ones.\vspace*{0.03in}

Next we formulate the {\em metric regularity} and {\em subregularity} properties of multifunctions that are highly recognized in variational
analysis and optimization. These properties are	frequently used in what follows. To proceed, recall that the distance function associated with a set $\Omega\subset\R^n$ is 
\begin{equation*} 
{\rm dist}(x;\Omega):=\inf\big\{\|w-x\|\;\big|\;w\in\Omega\big\},\quad
x\in\R^n. 
\end{equation*} 
 
\begin{Definition}\label{met-reg} {\rm Let $F\colon\R^n\tto\R^m$ be a set-valued mapping, and let $(\bar{x},\bar{y})\in\gph F$. We say that: 

{\bf(i)} $F$ is {\em metrically regular} around $(\bar{x},\bar{y})$ with modulus $\mu>0$ if there exist neighborhoods $U$ of $\bar{x}$
and $V$ of $\bar{y}$ ensuring that
\begin{equation*} 
{\rm dist}\big(x;F^{-1}(y)\big)\le\mu\,{\rm dist}\big(y;F(x)\big)\;\text{ for all }\;(x,y)\in U\times V. 
\end{equation*}
	 
{\bf(ii)} $F$ is {\em metrically subregular} at $(\bar{x},\bar{y})$
with modulus $\mu>0$ if there exists a neighborhood $U$ of $\bar{x}$ such that we have the estimate
\begin{equation*} 
{\rm dist}\big(x;F^{-1}(\bar{y})\big)\le\mu\,{\rm	dist}\big(\bar{y};F(x)\big)\;\text{ for all }\;x\in U.
\end{equation*}} 
\end{Definition}

A serious advantage of the limiting coderivative \eqref{lcod} is that it provides the following complete pointbased characterization of the metric regularity of any closed-graph multifunction $F\colon\R^n\tto\R^m$ around $(\ox,\oy)$ known as the {\em Mordukhovich coderivative criterion} \cite{Mordu93,Rockafellar98}:
\begin{equation}\label{cor-cr}
\ker D^*F(\ox,\oy)=\{0\}.
\end{equation}

The following notions were introduced by Poliquin and Rockafellar \cite{Poliquin}.

\begin{Definition}\label{prox-regfunction} \rm  A function $\varphi: \R^n \to \overline{\R}$ is said to be {\em prox-regular} at $\bar{x}$ for $\bar{v}\in\partial\ph(\ox)$ if $\varphi$ is finite and locally l.s.c.\ around $\bar{x}$, and if there exist numbers $\epsilon > 0$ and $r \geq  0$ such that 
\begin{equation}\label{prox}
\varphi(x) \geq \varphi(u) +\langle v, x- u \rangle  - \frac{r}{2}\|x-u\|^2 \quad  \text{for all} \quad x \in B_\epsilon(\bar{x})\;\mbox{ and }\;v\in \partial\varphi(x)
\end{equation} 
with $\|v-\bar{v}\| <\epsilon$, $\|u-\bar{x}\|< \epsilon$, and $\varphi(u)<\varphi(\bar{x})+\epsilon$. 
The function $\varphi$ is said to be {\em subdifferentially continuous} at $\bar{x}$ for $\bar{v}$ if $\bar{v} \in \partial\varphi(\bar{x})$ and, whenever $(x_k,v_k)\to(\bar{x},\ov)$ as $k\to\infty$ and $v_k \in \partial \varphi(x_k)$ for all $k\in\N$, we have $\varphi(x_k) \to \varphi(\bar{x})$ as $k\to\infty$.
\end{Definition}

If $\varphi$ is subdifferentially continuous at $\bar{x}$ for $\bar{v}$, the inequality  ``$\varphi(u)<\varphi(\bar{x})+\epsilon$" in the definition of prox-regularity above can be omitted. Functions that are both prox-regular and subdifferentially continuous are called \textit{continuously prox-regular}. As discussed in \cite{Rockafellar98}, the class of continuously prox-regular functions is fairly broad containing, besides $\mathcal{C}^2$-smooth and l.s.c.\ convex functions, also functions of class $\mathcal{C}^{1,1}$, lower-$\mathcal{C}^2$ functions, strongly amenable functions, etc. It has been recognized that continuously prox-regular functions play a central role in second-order variational analysis and its	applications; see the books \cite{Rockafellar98} and \cite{Mor24} with the commentaries and references therein. Nevertheless, our major results obtained in this paper show that the subdifferential continuity assumption can be generally dropped in the second-order necessary and sufficient conditions for local and strong local minimizers.\vspace*{-0.1in}

\section{Local Minimizers via Moreau Envelopes}\label{sec:moreau}\vspace*{-0.05in}

In this section, we verify the preservation of local minimizers of l.s.c.\ functions under taking the Moreau envelope associated with the function in question. Besides being important for their own sake, such a preservation occurs to be useful in our subsequent second-order derivations. 

For a proper l.s.c.\ function $\varphi: \R^n\to\overline{\R}$ and a parameter value $\lambda > 0$, the \textit{Moreau envelope} $e_\lambda\varphi$ and \textit{proximal mapping} $\text{\rm Prox}_{\lambda\varphi}$ are defined for all $x\in\R^n$ as follows:
\begin{equation}\label{Moreau} 
e_\lambda\varphi(x):= \inf\Big\{\varphi(y)+ \frac{1}{2\lambda}\|y-x\|^2\;\Big|\; y \in \R^n\Big\},
\end{equation}	
\begin{equation}\label{ProxMapping} 
\operatorname{Prox}_{\lambda\varphi}(x):= \operatorname{argmin} \Big\{\varphi(y)+ \frac{1}{2\lambda}\|y-x\|^2\;\Big|\; y \in \R^n\Big\}.
\end{equation}	
The function $\varphi$ is called \textit{prox-bounded} if there exists $\lambda>0$ such that $e_\lambda\varphi(x)>-\infty$ for some $x\in\R^n$. Equivalent descriptions of prox-boundedness are given in \cite[Exercise~1.24]{Rockafellar98}. To present now the important properties of Moreau envelopes and proximal mappings, which are taken from \cite[Proposition~13.37]{Rockafellar98}, we first recall that the {\em $\varphi$-attentive $\epsilon$-localization} of the subgradient mapping $\partial\varphi$ around $(\ox,\ov)\in\gph\partial\ph$ is the set-valued mapping $T\colon\R^n\tto\R^n$ defined by
\begin{equation}\label{attentive}
T(x):=\begin{cases}
\big\{v\in\partial\varphi(x)\;\big|\;\|v-\ov\|<\epsilon\big\} & \text{if}\quad \|x-\ox\|<\epsilon\; \text{ and }\; |\varphi(x)-\varphi(\ox)|<\epsilon,\\
\emp &\text{otherwise},
\end{cases}
\end{equation} 
where we skip indicating the subscript ``$\ve$" for notation simplicity. If $\varphi$ is an l.s.c.\ function, the above localization can be taken with just $\varphi(x)<\varphi(\ox)+\epsilon$ in \eqref{attentive}. Here is the aforementioned result. 

\begin{Proposition}\label{C11} Let $\varphi\colon\R^n\to\oR$ be an l.s.c.\ and prox-bounded function that is prox-regular at $\ox$ for $\ov\in\partial\varphi(\ox)$ with respect to a radius $\ve>0$. Then for all $\lambda>0$ sufficiently small, we find a convex neighborhood $U_\lambda$ of $\ox+\lambda\ov$ on which the following properties hold:

{\bf(i)} The Moreau envelope $e_\lambda\varphi$ from \eqref{Moreau} is of class ${\cal C}^{1,1}$ on the set $U_\lambda $.

{\bf(ii)} The proximal mapping $\text{\rm Prox}_{\lm\ph}$ from \eqref{ProxMapping}  is single-valued, monotone, and Lipschitz continuous on $U_\lm$ satisfying there the condition ${\rm Prox}_{\lm\ph} (x) = (I+\lambda T)^{-1}(x)$, where $T$ is the $\varphi$-attentive $\ve$-localization of $\partial \varphi$ from \eqref{attentive}. In particular, ${\rm Prox}_{\lm\ph} (\ox+\lambda \ov) = \{\ox\}$.

{\bf(iii)} The gradient of $e_\lm\ph$ is calculated by
\begin{equation}\label{GradEnvelope} 
\nabla e_\lambda\varphi(x)=\frac{1}{\lambda}\big(x-\text{\rm Prox}_{\lambda\varphi}(x)\big)=\big(\lambda I+T^{-1}\big)^{-1}(x)\;\mbox{ for all }\;x\in U_\lambda.
\end{equation}
If $\varphi$ is subdifferentially continuous at $\ox$ for $\ov$, then $T$ in \eqref{GradEnvelope} can be replaced by $\partial\varphi$. 
 \end{Proposition}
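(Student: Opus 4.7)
The plan is to follow the classical line of argument from \cite[Proposition~13.37]{Rockafellar98}, whose core mechanism is the hypomonotonicity characterization of prox-regularity. First I would recall (or re-derive from \eqref{prox}) that prox-regularity of $\varphi$ at $\ox$ for $\ov$ is equivalent to the existence of $\epsilon>0$ and $r\ge 0$ such that the $\varphi$-attentive $\epsilon$-localization $T$ of $\partial\varphi$ defined in \eqref{attentive} satisfies the \emph{hypomonotonicity} estimate
\begin{equation*}
\la v_1-v_2,y_1-y_2\ra\ge -r\|y_1-y_2\|^2\quad\text{whenever }(y_i,v_i)\in\gph T,\ i=1,2.
\end{equation*}
Equivalently, $T+rI$ is a monotone set-valued mapping on its effective domain.

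Next, I would fix $\lambda\in(0,1/r)$ (with the convention $1/r=\infty$ when $r=0$) and analyze the mapping $I+\lambda T$. From the hypomonotonicity estimate one obtains, for any two pairs $(y_i,v_i)\in\gph T$ and $x_i:=y_i+\lambda v_i$, the expansion
\begin{equation*}
\|x_1-x_2\|^2=\|y_1-y_2\|^2+2\lambda\la v_1-v_2,y_1-y_2\ra+\lambda^2\|v_1-v_2\|^2\ge(1-2\lambda r)\|y_1-y_2\|^2,
\end{equation*}
which shows $(I+\lambda T)^{-1}$ is single-valued and locally Lipschitz (with constant $(1-2\lambda r)^{-1/2}$) on its domain, and also monotone by direct verification. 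Applying this at $(\ox,\ov)$ yields a convex neighborhood $U_\lambda$ of $\ox+\lambda\ov$ on which $(I+\lambda T)^{-1}$ is well defined, single-valued, monotone, and Lipschitz, with $(I+\lambda T)^{-1}(\ox+\lambda\ov)=\ox$.

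The central obstacle — which I expect to be the most delicate step — is identifying this resolvent with the proximal mapping $\text{Prox}_{\lambda\varphi}$ of \eqref{ProxMapping}, because the latter is defined via a \emph{global} infimum over $\R^n$, whereas $T$ only captures local subgradient information. To overcome this I would combine prox-boundedness (guaranteeing that for $\lambda$ small enough the infimum in \eqref{Moreau} is finite and attained) with the prox-regularity inequality \eqref{prox}: any candidate minimizer $y$ satisfies $(x-y)/\lambda\in\partial\varphi(y)$, and the quadratic lower bound in \eqref{prox} together with shrinking $U_\lambda$ and $\lambda$ forces $y$ to lie in the domain of the localization $T$ and to be the unique global minimizer. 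This gives $\text{Prox}_{\lambda\varphi}(x)=(I+\lambda T)^{-1}(x)$ on $U_\lambda$, and in particular the uniqueness of the prox at $\ox+\lambda\ov$ with value $\ox$, yielding (ii).

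Finally, to obtain (i) and (iii), I would compute the gradient of $e_\lambda\varphi$ directly from \eqref{Moreau}. Using that $\text{Prox}_{\lambda\varphi}$ is single-valued and Lipschitz on $U_\lambda$, the first-order optimality condition $(x-y)/\lambda\in\partial\varphi(y)$ at $y=\text{Prox}_{\lambda\varphi}(x)$, and a standard squeeze between the upper and lower Dini directional estimates of $e_\lambda\varphi$, one derives
\begin{equation*}
\nabla e_\lambda\varphi(x)=\tfrac{1}{\lambda}\bigl(x-\text{Prox}_{\lambda\varphi}(x)\bigr)\quad\text{for all }x\in U_\lambda,
\end{equation*}
and the equivalent resolvent form $(\lambda I+T^{-1})^{-1}(x)$ follows from (ii) by the identity $(I+\lambda T)^{-1}(x)=x-\lambda(\lambda I+T^{-1})^{-1}(x)$. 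Lipschitz continuity of $\nabla e_\lambda\varphi$ is inherited from that of $\text{Prox}_{\lambda\varphi}$, proving the ${\cal C}^{1,1}$ assertion. The concluding statement under subdifferential continuity is immediate, since in that case the $\varphi$-attentive condition $\varphi(x)<\varphi(\ox)+\epsilon$ in \eqref{attentive} is automatic near $\ox$, and $T$ may be replaced by $\partial\varphi$ after possibly shrinking $\epsilon$.
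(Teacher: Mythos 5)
Your proposal is correct and follows essentially the same route as the actual source of this statement: the paper does not prove Proposition~\ref{C11} but imports it verbatim from \cite[Proposition~13.37]{Rockafellar98}, whose proof runs exactly as you describe --- hypomonotonicity of the $\varphi$-attentive localization $T$, single-valuedness, monotonicity and Lipschitz continuity of the resolvent $(I+\lambda T)^{-1}$, identification of that resolvent with the globally defined ${\rm Prox}_{\lambda\varphi}$ via prox-boundedness and the prox-regularity inequality, and then differentiation of the envelope together with the resolvent identity. The only nit is quantitative: your squared expansion produces the factor $1-2\lambda r$, so it requires $\lambda<1/(2r)$ rather than $\lambda<1/r$ (alternatively use $\langle x_1-x_2,y_1-y_2\rangle\ge(1-\lambda r)\|y_1-y_2\|^2$ and Cauchy--Schwarz to get the constant $(1-\lambda r)^{-1}$ on the full range), which is harmless since the conclusion is asserted only for all $\lambda>0$ sufficiently small.
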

It follows from Proposition \ref{C11} that $\ox$ is a stationary point of the Moreau envelope $e_\lambda\varphi$ if and only if $\ox$ is a stationary point of $\varphi$ in the sense that $0\in\partial \varphi(\ox)$, which is known as the {\em limiting/M$($ordukhovich$)$-stationarity}. Recall that $\ox\in \dom \varphi$ is called a {\em local minimizer} to $\varphi:\R^n \rightarrow \overline{\R}$ if there is a neighborhood $U$ of $\ox$ such that 
\begin{equation*}
    \varphi (x) \ge \varphi (\ox) \ \text{ for all }\ x\in U.
\end{equation*}
The natural question arises on whether we have the {\em local minima equivalence}
\begin{equation}\label{equivalencelocalmin}
\ox \; \text{is a local minimizer of }\;\varphi \iff \; \ox \; \text{is a local minimizer of } e_\lambda\varphi.
\end{equation}
The next theorem shows that the above equivalence holds. Recall that $\ov \in \R^n$ is a {\em proximal subgradient} of $\varphi:\R^n\to\overline{\R}$ at $\ox\in\dom\ph$, denoted as $\ov\in \partial_P \varphi (\ox)$, if there are positive numbers $r$ and $\varepsilon$ for which
\begin{equation*}
\varphi(x)\ge \varphi(\ox) +\langle \ov, x-\ox\rangle  -\frac{r}{2} \|x - \ox\|^2\;\mbox{ whenever }\;x \in B_{\varepsilon}(\ox). 
\end{equation*}
It follows from Definition~\ref{prox-regfunction} that if $\varphi$ is prox-regular at $\ox$ for $\ov \in \partial\varphi(\ox)$, then $\ov$ is a proximal subgradient of $\varphi$. Note that the inverse statement is not true. Indeed, the function 
$$
\varphi(x):=\begin{cases}
|x|\left( 1+ \sin \left(\frac{1}{x} \right)\right) & \text{if }\; x \ne 0,\\
0 &\text{otherwise}
\end{cases}
$$
taken from \cite[p.~618]{Rockafellar98} is not prox-regular at $\ox$ for $\ov:=0$, but $\ov$ is a proximal subgradient of $\varphi$ at $0$. Also, if $\ox$ is a local minimizer to $\varphi$, then $0$ is a proximal subgradient of $\varphi$ at $\ox$.
  
\begin{Theorem}\label{localMoreau}  Let $\varphi: \R^n \to \overline{\R}$ be prox-bounded and l.s.c. around $\ox \in \dom \varphi$. Then for all $\lambda >0$ sufficiently small, the following are equivalent:

{\bf(i)} $\bar{x}$ is a local minimizer of $\varphi$.

{\bf(ii)} $\bar{x}$ is a local minimizer of $e_\lambda\varphi$.
\end{Theorem}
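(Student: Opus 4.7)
The argument rests on two elementary facts about the Moreau envelope \eqref{Moreau}: the pointwise upper bound $e_\lambda\varphi(x)\le\varphi(x)$ obtained by taking $y=x$ in the infimum, and the localization afforded by prox-boundedness, meaning that for $\lambda$ sufficiently small the infimum defining $e_\lambda\varphi(x)$ concentrates on $y$ lying in a fixed bounded neighborhood of $\ox$ whenever $x$ is close to $\ox$. With these in hand, the two implications follow from short but opposite quadratic comparisons.

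For (i)$\Rightarrow$(ii), choose $\delta>0$ with $\varphi(y)\ge\varphi(\ox)$ on $\B_\delta(\ox)$. Writing the prox-boundedness bound as $\varphi(y)\ge c-r\|y\|^2$ for suitable constants $c,r$ and using $\|y\|^2\le 2\|y-x\|^2+2\|x\|^2$, I obtain
\[
\varphi(y)+\tfrac{1}{2\lambda}\|y-x\|^2\ge c-2r\|x\|^2+\bigl(\tfrac{1}{2\lambda}-2r\bigr)\|y-x\|^2.
\]
For $x\in\B_{\delta/2}(\ox)$ and $\|y-\ox\|\ge\delta$ we have $\|y-x\|\ge\delta/2$, so the right-hand side exceeds $\varphi(\ox)$ once $\lambda$ is small enough. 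On the complementary region $y\in\B_\delta(\ox)$, local minimality gives $\varphi(y)+\frac{1}{2\lambda}\|y-x\|^2\ge\varphi(\ox)$ directly. Combining both regions yields $e_\lambda\varphi(x)\ge\varphi(\ox)$ for all $x$ near $\ox$; coupled with $e_\lambda\varphi(\ox)\le\varphi(\ox)$, this forces $e_\lambda\varphi(\ox)=\varphi(\ox)$ and hence (ii).

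For (ii)$\Rightarrow$(i), the crux is to verify $e_\lambda\varphi(\ox)=\varphi(\ox)$. By prox-boundedness and lower semicontinuity, $\text{\rm Prox}_{\lambda\varphi}(\ox)$ is nonempty for $\lambda$ small; select $y_0$ in it. Suppose for contradiction $e_\lambda\varphi(\ox)<\varphi(\ox)$, so necessarily $y_0\ne\ox$. The smooth quadratic $h(x):=\varphi(y_0)+\frac{1}{2\lambda}\|y_0-x\|^2$ is a majorant of $e_\lambda\varphi$ with $h(\ox)=e_\lambda\varphi(\ox)$ and unique global minimum at $y_0$. Along the segment $x_t:=\ox+t(y_0-\ox)$ one has $h(x_t)<h(\ox)=e_\lambda\varphi(\ox)$ for every $t\in(0,1]$, so $e_\lambda\varphi(x_t)<e_\lambda\varphi(\ox)$ for arbitrarily small $t>0$, contradicting (ii). Once $e_\lambda\varphi(\ox)=\varphi(\ox)$ is secured, the chain $\varphi(x)\ge e_\lambda\varphi(x)\ge e_\lambda\varphi(\ox)=\varphi(\ox)$ valid for $x$ near $\ox$ delivers (i).

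The main technical content sits in the forward direction, where $\lambda$ must be chosen uniformly small so that the Moreau infimum localizes inside $\B_\delta(\ox)$; prox-boundedness is precisely what enables this. The converse is purely geometric and imposes no smallness on $\lambda$ beyond what prox-boundedness already demands for $e_\lambda\varphi$ to be finite and $\text{\rm Prox}_{\lambda\varphi}(\ox)$ to be nonempty.
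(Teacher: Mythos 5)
Your proof is correct, but it follows a genuinely different route from the paper in both directions. For (i)$\Rightarrow$(ii), the paper observes that local minimality makes $0$ a proximal subgradient, invokes an external result (Proposition~6.2 of the cited variational-convexity paper) to get $\mathrm{Prox}_{\lambda\varphi}(\ox)=\{\ox\}$, and then runs a sequential contradiction argument using the convergence properties of proximal points from \cite[Theorem~1.25]{Rockafellar98}; you instead split the Moreau infimum into the regions $\|y-\ox\|<\delta$ and $\|y-\ox\|\ge\delta$ and kill the far region uniformly for $x$ near $\ox$ via the quadratic minorant coming from prox-boundedness, which directly yields $e_\lambda\varphi\ge\varphi(\ox)=e_\lambda\varphi(\ox)$ near $\ox$. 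For (ii)$\Rightarrow$(i), the paper derives $\ox\in\mathrm{Prox}_{\lambda\varphi}(\ox)$ from the generalized Fermat rule combined with the subgradient inclusion $\partial e_\lambda\varphi(\ox)\subset\frac{1}{\lambda}[\ox-\mathrm{Prox}_{\lambda\varphi}(\ox)]$ of \cite[Example~10.32]{Rockafellar98}; you reach the same key identity $e_\lambda\varphi(\ox)=\varphi(\ox)$ by a purely geometric argument, using the quadratic majorant $h(x)=\varphi(y_0)+\frac{1}{2\lambda}\|y_0-x\|^2$ touching $e_\lambda\varphi$ at $\ox$ and decreasing strictly along the segment toward $y_0$, which contradicts local minimality of $e_\lambda\varphi$ if $y_0\ne\ox$. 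What each approach buys: the paper's proof is shorter given the cited machinery and produces the extra information $\mathrm{Prox}_{\lambda\varphi}(\ox)=\{\ox\}$, while yours is essentially self-contained (the only external ingredient is nonemptiness of $\mathrm{Prox}_{\lambda\varphi}(\ox)$ for small $\lambda$, which both proofs take from \cite[Theorem~1.25]{Rockafellar98}), avoids all subdifferential calculus, and makes explicit how the admissible threshold for $\lambda$ depends on the prox-bound constants and the radius of local minimality.
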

\begin{proof} 
To verify  (i)$\Longrightarrow$(ii), assume that $\ox$ is a local minimizer of $\varphi$, i.e., there exists $\varepsilon>0$ such that
\begin{equation}\label{local}
\varphi(x)\geq \varphi(\bar{x}) \quad \text{for all} \quad x \in \mathbb{B}_{\varepsilon}(\ox),
\end{equation}
which implies that $\ov=0$ is a proximal subgradient of $\varphi$ at $\ox$.  {We claim that $\text{Prox}_{\lambda \varphi}(\bar{x})=\{\bar{x}\}$ for all $\lambda>0$  sufficiently small. Indeed, the prox-boundedness of $\ph$ yields $e_{\lambda'}\varphi(\ox)>-\infty$ for some $\lambda'>0$. Then we choose $\bar{\lambda}>0$ to be sufficiently small to see that the inequalities
\begin{equation}\label{K3}
\varphi(\ox) -(2\bar{\lambda})^{-1}\|x -\ox\|^2 <  e_{\lambda'} \varphi(\ox) -(2\lambda')^{-1}\|x -\ox\|^2 \le \varphi (x)
\end{equation}
hold for all $x$ with $\|x -\ox\|>\varepsilon$. For any $0<\lambda<\bar{\lambda}$, it follows from \eqref{local} and \eqref{K3} that
\begin{equation*}
\varphi(x)+(2\lambda)^{-1}\|x-\ox\|^2 > \varphi(\ox) \;\mbox{ whenever }\;x  \neq \ox,
\end{equation*}
which is equivalent to the condition $\mathrm{Prox}_{\lambda\varphi}(\ox)=\{\ox\}$. We therefore have that $\text{Prox}_{\lambda \varphi}(\bar{x})=\{\bar{x}\}$ for all $\lambda>0$  sufficiently small.} Fixing such a small $\lambda$ and supposing that $\ox$ is not a local minimizer of $e_{\lambda}\varphi$ give us a sequence $\{x_k\}$ converging to $\ox$ for which 
\begin{equation}\label{contra-1}
e_{\lambda}\varphi (x_k) < e_{\lambda}\varphi (\ox) \text{ whenever }\;k\in \N.
\end{equation}
It follows from \cite[Theorem~1.25]{Rockafellar98} that $\text{\rm Prox}_{\lambda \varphi}(x_k)\ne\emp$, which allows us to pick $w_k\in \text{\rm Prox}_{\lambda \varphi}(x_k)$ for all $k\in \N$.  {Since $x_k\rightarrow \ox$ as $k\to\infty$ and since $\text{Prox}_{\lambda \varphi}(\bar{x})=\{\bar{x}\}$, we readily deduce from \cite[Theorem~1.25]{Rockafellar98} that $w_{k}\rightarrow \ox$ as $k\rightarrow \infty$.} 
Therefore, $w_{k}\in \mathbb{B}_{\varepsilon}(\ox)$ for sufficiently large $k\in \N$, which leads us by \eqref{local} to the relationships
\begin{eqnarray*}
e_\lambda\varphi(\bar{x})\leq\varphi(\bar{x})\le\varphi(w_{k})
=e_\lambda\varphi(x_{k})-\frac{1}{2\lambda}\|x_{k}-w_{k}\|^2,
\end{eqnarray*}
which clearly contradict \eqref{contra-1}. Thus $\bar{x}$ is a local minimizer of $e_\lambda\varphi$ for all $\lambda>0$ sufficiently small.\vspace*{0.03in}

To verify the opposite implication, assume that (ii) holds and get $\text{\rm Prox}_{\lambda\varphi}(\ox)\ne\emp$ by \cite[Theorem~1.25]{Rockafellar98}. Moreover, the l.s.c.\ and prox-boundedness of $\varphi$ imply due to \cite[Example~10.32]{Rockafellar98} that
\begin{equation}\label{sub-e}
\partial e_{\lambda}\varphi (\ox) \subset \dfrac{1}{\lambda}\big[\ox-\text{\rm Prox}_{\lambda\varphi}(\ox)\big].
\end{equation}
By the generalized Fermat rule for limiting subgradients in \cite[Proposition~1.114]{Mordukhovich06}, we have $0\in \partial e_{\lambda}\varphi (\ox)$. Combining the latter with \eqref{sub-e} ensures that 
\begin{equation*}\label{eq:Fermat-e}
0\in \partial e_{\lambda}\varphi (\ox)\subset \dfrac{1}{\lambda}\big[\ox-\text{\rm Prox}_{\lambda\varphi}(\ox)\big],
\end{equation*}
and so $\ox \in \text{\rm Prox}_{\lambda\varphi}(\ox)$. The minimization in (ii) yields the existence of a neighborhood $U$ of $\ox$ with
$$
\varphi(x) \geq e_\lambda\varphi(x) \geq e_\lambda\varphi(\ox)= \varphi(\ox) + \frac{1}{2\lambda}\|\ox-\ox\|^2 = \varphi(\ox) \quad \text{whenever} \; x \in U,
$$
which therefore completes the proof of the theorem.
\end{proof}\vspace*{-0.25in}

\section{Second-Order Necessary Conditions under Prox-Regularity}\label{sec:optiforprox}\vspace*{-0.05in}

This section is devoted to deriving second-order necessary optimality conditions in the unconstrained extended-real-valued format of minimization with  general prox-regular objectives. The tools of variational analysis used for these purposes are {\em second-order subdifferentials}, which are defined below in the scheme of \cite{Mor92} as coderivatives of subgradient mappings. The following second-order constructions of this type are described via combinations of the subgradients and coderivatives discussed in Section~\ref{sec:prelim}.

\begin{Definition}\label{2ndsub}\rm 
Let $\varphi: \R ^n \to \overline{\R }$  be a function finite at some point $\bar{x}$. 

{\bf(i)} For any $\bar{v} \in \partial \varphi(\bar{x})$, the multifunction $\partial^2 \varphi(\bar{x},\bar{v}): \R ^n \rightrightarrows \R ^n$ with the values
$$
\partial^2 \varphi(\bar{x},\bar{v})(u):= (D^*\partial\varphi)(\bar{x},\bar{v})(u), \quad u \in \R ^n,
$$
is the {\em limiting/basic second-order subdifferential}  of $\varphi$ at $\bar{x}$ relative to $\bar{v}$. 

{\bf(ii)} For any $\bar{v} \in {\partial} \varphi(\bar{x})$, the multifunction $\breve{\partial}^2 \varphi(\bar{x},\bar{v}): \R ^n \rightrightarrows \R ^n$ with the values
$$
\breve{\partial}^2 \varphi(\bar{x},\bar{v})(u):= (\widehat{D}^* {\partial}\varphi)(\bar{x},\bar{v})(u), \quad u \in \R ^n,
$$
is the {\em combined second-order subdifferential} of $\varphi$ at $\bar{x}$ relative to $\bar{v}$. 

{\bf(iii)} For any $\bar{v} \in\Hat\partial\varphi(\bar{x})$, the multifunction $\widehat{\partial}^2 \varphi(\bar{x},\bar{v}): \R ^n \rightrightarrows \R ^n$ with the values
$$
\widehat{\partial}^2 \varphi(\bar{x},\bar{v})(u):= (\widehat{D}^* {\widehat{\partial}}\varphi)(\bar{x},\bar{v})(u),\quad u \in \R^n,
$$
is the {\em regular second-order subdifferential}  of $\varphi$ at $\bar{x}$ relative to $\bar{v}$. 
\end{Definition}
We skip $\bar{v} = \nabla \varphi(\bar{x})$ in the above second-order subdifferentials if $\varphi$ is $\mathcal{C}^1$-smooth around $\bar{x}$. The inclusions given below are obvious:
\begin{equation*}
\breve{\partial}^2\varphi(x,v)(u)\subset\partial^2\varphi(x,v)(u)\;\text{ for all }\;(x,v)\in\text{\rm gph}\,\partial\varphi,\;u\in\R^n.
\end{equation*}
\begin{equation*}
\breve{\partial}^2\varphi(x,v)(u)\subset\widehat{\partial}^2\varphi(x,v)(u)\;\text{ for all }\; (x,v)\in\text{\rm gph}\,\partial\varphi,\;u\in\R^n.
\end{equation*}
In general, the regular second-order subdifferential and the limiting one are incomparable. However, if $\varphi\in\mathcal{C}^1$ around $\bar{x}$, then we have the relationships
\begin{equation}\label{Frechet_Mordukhovich}
\breve{\partial}^2 \varphi(\bar{x})(u)= \widehat{\partial}^2 \varphi(\bar{x})(u)\subset\partial^2 \varphi(\bar{x})(u)\;\mbox{for all }\;u \in \R^n.
\end{equation}
If $\varphi\in\mathcal{C}^{1,1}$ around $\bar{x}$, then the second-order subdifferential calculation can be simplified by the scalarization formulas below taken from \cite[Proposition~3.5]{bny07} and \cite[Proposition~1.120]{Mordukhovich06}, respectively:
\begin{equation*}
\widehat{\partial}^2 \varphi(\bar{x})(u)=\breve{\partial}^2 \varphi(\bar{x})(u)=\widehat\partial\langle u,\nabla\varphi\rangle(\bar{x}), \quad
{\partial}^2 \varphi(\bar{x})(u)=\partial\langle u,\nabla\varphi\rangle(\bar{x}),\quad u\in\R^n.
\end{equation*}
If $\varphi$ is $\mathcal{C}^2$-smooth around $\bar{x}$, then the above second-order subdifferentials reduce to the classical Hessian
\begin{equation*}
\partial^2 \varphi(\bar{x})(u)=\breve{\partial}^2 \varphi(\bar{x})(u)=\widehat{\partial}^2 \varphi(\bar{x})(u)= \{\nabla^2 \varphi(\bar{x})u \} \quad\mbox{whenever }\;u \in \R ^n.
\end{equation*} 

To derive second-order necessary optimality conditions for local minimizers of prox-regular functions, we begin with the following useful remark.

\begin{Remark}\label{prox-bounded} Let $\varphi:\R^n\to\overline{\R}$ be prox-regular at $\bar{x}\in \dom \varphi$ for $\ov \in\partial\varphi(\bar{x})$ with respect to $\ve>0$ and $r\ge 0$. Then the function $\psi:\R^n\to \overline{\R}$ given by
\begin{equation}\label{proxboundedness}
\psi(x):= \varphi (x)+\delta_{B_\varepsilon(\ox)}(x)\;\text{ whenever }\;x\in\R^n
\end{equation} 
is prox-bounded and prox-regular at $\ox$ for $\ov \in \partial\psi(\ox)$ with respect to the same $\ve>0$ and $r\ge 0$. Furthermore, we have 
\begin{equation}\label{chain}
\breve{\partial}^2 \psi(x,y) (u) = \breve{\partial}^2\varphi(x,y)(u)\;\mbox{ for all }\;(x,y) \in\gph\partial \psi \cap (B_\varepsilon(\ox)\times \R^n)\;\mbox{ and }\;u \in \R^n. 
\end{equation}
\end{Remark}

The next theorem provides the desired {\em second-order subdifferential necessary optimality condition} for local minimizers of the general class of prox-regular functions.
 
\begin{Theorem}\label{NecessI} If $\ox\in\dom\ph$ is a local minimizer of an extended-real-valued function $\ph\colon\R^n\to\oR$, then $0\in\partial\ph(\ox)$. Assuming in addition that $\ph$ is prox-regular at $\ox$ for $0$, we have the pointbased second-order necessary optimality condition for local minimizers
\begin{equation}\label{PSDnecessary}
\langle z,w\rangle \geq 0\;\text{ whenever }\;z \in \breve{\partial}^2\varphi(\bar{x},0)(w)\;\mbox{ and }\;w\in \R^n
\end{equation} 	
expressed via the combined second-order subdifferential of $\ph$ at $\ox$ relative to $0$ from Definition~{\rm\ref{2ndsub}(ii)}.
\end{Theorem}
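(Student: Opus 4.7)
The first-order claim $0\in\partial\ph(\ox)$ is the generalized Fermat rule, since a local minimizer automatically satisfies $0\in\widehat\partial\ph(\ox)\subset\partial\ph(\ox)$. For the second-order assertion, the plan is to reduce to a $\mathcal{C}^{1,1}$ situation via the Moreau envelope, invoke a direct second-order necessary condition there, and pull the result back through a normal-cone computation on the graph of $\nabla e_\lambda\psi$. Using Lemma~\ref{prox-bounded}, I would first replace $\ph$ by the localization $\psi(x):=\ph(x)+\delta_{B_\vep(\ox)}(x)$: this $\psi$ is prox-bounded, prox-regular at $\ox$ for $0$, coincides with $\ph$ on $B_\vep(\ox)$, and satisfies $\breve\partial^2\psi(\ox,0)=\breve\partial^2\ph(\ox,0)$. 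Hence $\ox$ remains a local minimizer of $\psi$, and Theorem~\ref{localMoreau} transfers this to $e_\lambda\psi$ for all small $\lambda>0$. Proposition~\ref{C11}(i) then guarantees that $e_\lambda\psi$ is $\mathcal{C}^{1,1}$ on a neighborhood of $\ox+\lambda\cdot 0=\ox$, and parts (ii)--(iii) supply the identity $\nabla e_\lambda\psi=(\lambda I+T^{-1})^{-1}$ on that neighborhood, where $T$ is the $\psi$-attentive $\vep$-localization of $\partial\psi$.

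Next I would establish the following $\mathcal{C}^{1,1}$ lemma: if $\ox$ is a local minimizer of $g\in\mathcal{C}^{1,1}$, then $\la z,u\ra\ge 0$ for every $u\in\R^n$ and every $z\in\breve\partial^2 g(\ox)(u)$. Using the scalarization $\breve\partial^2 g(\ox)(u)=\widehat\partial\la u,\nabla g\ra(\ox)$ together with $\nabla g(\ox)=0$, the defining liminf gives, for each $\eta>0$, some $\delta>0$ with $\la u,\nabla g(x)\ra\ge\la z,x-\ox\ra-\eta\|x-\ox\|$ for all $x\in B_\delta(\ox)$. Specializing to $x=\ox+tu$ with $t<0$ and letting $F(t):=g(\ox+tu)$, this reads $F'(t)\ge t\bigl(\la z,u\ra+\eta\|u\|\bigr)$. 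Integrating over $[t,0]$ produces $F(t)-F(0)\le\tfrac{t^2}{2}\bigl(\la z,u\ra+\eta\|u\|\bigr)$, and combining with the local minimum inequality $F(t)\ge F(0)$ forces $\la z,u\ra+\eta\|u\|\ge 0$; since $\eta>0$ is arbitrary, $\la z,u\ra\ge 0$.

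Finally I would apply this lemma to $g=e_\lambda\psi$ and translate back. From $\nabla e_\lambda\psi=(\lambda I+T^{-1})^{-1}$, the graph of $\nabla e_\lambda\psi$ near $(\ox,0)$ is the image of $\gph T$ under the linear isomorphism $L(a,b)=(a+\lambda b,b)$; transforming regular normal cones via $L^*(p,q)=(p,\lambda p+q)$ gives the coderivative equivalence $z\in\breve\partial^2 e_\lambda\psi(\ox)(u)\iff z\in\widehat D^*T(\ox,0)(u-\lambda z)$. Since $\gph T\subset\gph\partial\psi$, the usual contravariance of regular normals under set inclusion yields $\breve\partial^2\psi(\ox,0)(\cdot)\subset\widehat D^*T(\ox,0)(\cdot)$, so any $z\in\breve\partial^2\ph(\ox,0)(w)=\breve\partial^2\psi(\ox,0)(w)$ also lies in $\breve\partial^2 e_\lambda\psi(\ox)(w+\lambda z)$. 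The $\mathcal{C}^{1,1}$ lemma then gives $\la z,w+\lambda z\ra\ge 0$, i.e., $\la z,w\ra\ge-\lambda\|z\|^2$, and letting $\lambda\dn 0$ yields $\la z,w\ra\ge 0$. The main obstacle is this last transformation: with subdifferential continuity one would have $\gph T=\gph\partial\psi$ locally and a direct equality of coderivatives, whereas in general $T$ is only a subset of $\partial\psi$; fortunately the containment $\breve\partial^2\psi\subset\widehat D^*T$ points precisely in the direction needed to transport elements of $\breve\partial^2\ph(\ox,0)$ into $\breve\partial^2 e_\lambda\psi(\ox)$, so no subdifferential-continuity hypothesis is required.
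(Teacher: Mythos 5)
Your proposal is correct and follows essentially the same route as the paper: reduce to a prox-bounded function via Lemma~\ref{prox-bounded}, pass to the Moreau envelope using Theorem~\ref{localMoreau} and Proposition~\ref{C11}, transport $z\in\breve{\partial}^2\varphi(\ox,0)(w)\subset\widehat D^*T(\ox,0)(w)$ into $\breve{\partial}^2 e_\lambda\varphi(\ox)(w+\lambda z)$ through the identity $\nabla e_\lambda\varphi=(\lambda I+T^{-1})^{-1}$, and let $\lambda\downarrow 0$. The only cosmetic differences are that you prove the ${\cal C}^{1,1}$ second-order necessary condition directly (via scalarization and a one-dimensional integration), where the paper cites \cite[Theorem~3.3]{ChieuLeeYen17}, and you perform the graph transformation by the explicit linear isomorphism instead of invoking the regular coderivative sum rule; both substitutions are valid.
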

\begin{proof} The stationary condition $0 \in \partial\varphi(\ox)$ for a local minimizer $\ox$ follows immediately from \cite[Proposition~1.30]{Mor18}. By Remark~\ref{prox-bounded}, one finds an $\varepsilon>0$ such that $\psi:=\varphi+\delta_{B_{\ve}(\ox)}$ from \eqref{proxboundedness} is prox-bounded and prox-regular at $\ox$ for $0\in \partial \psi (\ox)$ with respect to such radius $\ve$. Obviously, $\psi$ is l.s.c. around $\ox$ as well. It follows from Proposition~\ref{C11} that there exists a neighborhood $U_\lambda$ of $\bar{x}$ for all $\lm>0$ sufficiently small such that the Moreau envelope $e_\lambda \psi$ is $\mathcal{C}^{1,1}$ on $U_\lambda$ and that representation \eqref{GradEnvelope} is satisfied in terms of $\psi$. Theorem~\ref{localMoreau} tells us that $\bar{x}$ is also a local minimizer of $e_\lambda\psi$ for all small $\lm>0$. Then it follows from \cite[Theorem~3.3]{ChieuLeeYen17} that the multifunction $\breve{\partial}^2e_\lambda\psi(\bar{x})$ is positive-semidefinite, i.e., 
\begin{equation}\label{PSDMoreaunecess}
\langle z,u\rangle \geq 0\;\text{ whenever }\;z \in\breve{\partial}^2e_\lambda\psi(\bar{x})(u),\quad u\in \R^n.
\end{equation}
Pick any $w\in \R^n$ and $z \in\breve{\partial}^2\varphi(\bar{x},0)(w)$. It follows from \eqref{chain} that $z\in \breve{\partial}^2 \psi (\ox,0)(w)$. Since $\gph T \subset \gph \partial \psi$, we have the inclusion
$$
\widehat{N}_{{\rm\small gph}\,\partial\psi} (\ox,0) \subset \widehat{N}_{{\rm\small gph}\,T}(\ox,0),
$$
which tells us by the definition of the regular coderivative that
\begin{equation}\label{subsetbreve}
\breve{\partial}^2\psi(\bar{x},0)(w)=(\widehat{D}^*\partial \psi)(\bar{x},0)(w) \subset \widehat{D}^*T(\bar{x},0)(w).
\end{equation}
It follows from \eqref{subsetbreve} and $z\in \breve{\partial}^2\psi(\bar{x},0)(w)$ that $-w\in\widehat{D}^*(T^{-1})(0,\bar{x})(-z)$. Using further \eqref{GradEnvelope} and then applying the coderivative sum rule from \cite[Theorem 1.62]{Mordukhovich06} lead us to the equalities
$$
\widehat{D}^*\big(\nabla e_\lambda \psi\big)^{-1}(0,\bar{x})(-z) =\widehat{D}^*(\lambda I + T^{-1})(0,\bar{x}\big)(-z) = -\lambda z + \widehat{D}^*(T^{-1})(0,\bar{x})(-z),$$
which imply in turn the inclusion
$$
-\lambda z  - w \in\widehat{D}^*(\nabla e_\lambda \psi)^{-1}(0,\bar{x})(-z).
$$
This tells us that $z\in\widehat{D}^*(\nabla e_\lambda \psi)(\bar{x})(\lambda z + w)$ meaning that $z \in \breve{\partial}^2e_\lambda\psi(\bar{x})(\lambda z + w)$. By \eqref{PSDMoreaunecess}, we get therefore that $\langle z, \lambda z + w\rangle \geq 0$. Letting $\lambda \dn 0$ in the latter inequality, we arrive at $\langle z, w\rangle \geq 0$ and thus justify the claimed condition \eqref{PSDnecessary}, which completes the proof of the theorem.
\end{proof}

\begin{Remark} \rm 
Note that we {\em cannot replace} the combined second-order subdifferential in \eqref{PSDnecessary} by the limiting one, i.e., the pointbased  relationship
\begin{equation}\label{PSDof2ndlimiting}
\langle z, w\rangle \geq 0\;\text{ for all }\;z \in {\partial}^2\varphi(\bar{x},0)(w)\;\mbox{ and }\;w\in \R^n
\end{equation}
fails to be a necessary condition for the local optimality of $\ox$ in minimizing a prox-regular function $\ph$. Indeed, it is discussed in \cite[Example~4.14]{dima} that there is an example in \cite{huytuyen16} showing that \eqref{PSDof2ndlimiting} fails for a local minimizer $\ox$ of a function $\ph\colon\R^2\to\oR$ that is {\em fully amenable} at $\ox$ being surely continuously prox-regular at $\ox$ for $0\in\partial\ph(\ox)$; see \cite{Rockafellar98}. Furthermore, Example~3.2 in \cite{ChieuLeeYen17} illustrates the violation of \eqref{PSDof2ndlimiting} for a local minimizer $\ox$ of a function $\ph\colon\R\to\R$ from class ${\cal C}^{1,1}$ around 
$\ox$, which is also continuously prox-regular at $\ox$ for $0=\nabla\ph(\ox)$.
\end{Remark}\vspace*{-0.2in}

\section{Variational Convexity and Second-Order Sufficient Conditions}\label{sec:vc}\vspace*{-0.05in}

Our goal here is to derive a {\em neighborhood} counterpart of the second-order subdifferential necessary condition \eqref{PSDnecessary} that are {\em sufficient} for local optimality in the extended-real-valued framework of prox-regular functions. In fact, we will do more. Namely, the main result of this section provides a new {\em coderivative characterization} of the {\em variational convexity} property for general l.s.c.\ functions $\ph\colon\R^n\to\oR$, which yields the sufficiency of local minimizers. 

We start with the definition of variational convexity (and its strong version used in the next section). These properties have been recently introduced by Rockafellar \cite{r19} and then have been further investigated and applied in \cite{KKMP23-2,kmp22convex,roc,ding}.

\begin{Definition}\label{vr} \rm An l.s.c.\  function $\varphi:\R^n\to\overline{\R}$ is {\em variationally convex} at $\ox$ for $\ov\in\partial\varphi(\ox)$ if there exist a convex neighborhood $U\times V$ of $(\ox,\ov)$ and an l.s.c.\ convex function $\psi\le\varphi$ on $U$ such that 
\begin{equation}\label{varconvex}
(U_\epsilon\times V)\cap\gph\partial\varphi=(U\times V)\cap\gph\partial\psi\;\text{and}\;\varphi(x)=\psi(x)\;\text{at the common elements}\;(x,v)
\end{equation}
for some $\ve>0$, where $U_\epsilon:=\{x\in U\;|\;\varphi(x)<\varphi(\ox)+\epsilon\}$. We say that $\ph$ is {\em strongly variationally convex} at $\ox$ for $\ov$ with modulus $\sigma >0$ if \eqref{varconvex} holds with $\psi$ being strongly convex on $U$ with this modulus.
\end{Definition}

First we provide a coderivative characterization of the {\em cocoercivity property} of single-valued continuous mappings. This result is of its own importance while being useful to derive the aforementioned characterizations of variational convexity.

\begin{Proposition}\label{prop:coco}
Let $f: U \rightarrow \R^n$ be a continuous mapping defined on an open set $U\subset \R^n$, and let $\sigma>0$. Then the following assertions are equivalent:

{\bf(i)} $f$ is cocoercive on $U$ with modulus $\sigma$, i.e., for arbitrary $x_1,x_2\in U$ it holds that
\begin{equation}\label{cocoercive}
\la f (x_1)-f (x_2),x_1-x_2\ra \ge \sigma \|f (x_1)-f (x_2)\|^2.
\end{equation}

{\bf(ii)} We have the coderivative  estimate
\begin{equation*}
\la w,z\ra \ge \sigma \|w\|^2 \ \text{ whenever }\ w\in D^* f (x)(z),\ z\in \R^n,\; \text{ and }\ x\in U.
\end{equation*}

{\bf(iii)} We have the coderivative estimate
\begin{equation}\label{2nd-order-posi}
\la w,z\ra \ge \sigma \|w\|^2 \ \text{ whenever }\ w\in \widehat{D}^* f (x)(z),\ z\in \R^n,\;\text{ and }\ x\in U.
\end{equation}
\end{Proposition}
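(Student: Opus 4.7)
The plan is to establish the cycle (i)$\Rightarrow$(iii)$\Rightarrow$(ii)$\Rightarrow$(i). The easiest link is (iii)$\Rightarrow$(ii): by the limiting construction of $D^{*}f$ via \eqref{lcod}--\eqref{outer}, every $w\in D^{*}f(x)(z)$ with $x\in U$ is a limit of some $w_{k}\in\widehat{D}^{*}f(x_{k})(z_{k})$ with $(x_{k},z_{k})\to(x,z)$, and applying (iii) to each triple and passing to the limit preserves $\langle w_{k},z_{k}\rangle\ge\sigma\|w_{k}\|^{2}$.

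For (i)$\Rightarrow$(iii), I would first deduce from (i) and Cauchy--Schwarz that $f$ is $(1/\sigma)$-Lipschitz on $U$. Fixing $w\in\widehat{D}^{*}f(x)(z)$ means $(w,-z)\in\widehat{N}_{\gph f}(x,f(x))$, so by unfolding the regular normal cone and using the Lipschitz estimate, for every $\varepsilon>0$ there is $\delta>0$ with
\[
\langle w,u-x\rangle-\langle z,f(u)-f(x)\rangle\le\varepsilon\sqrt{1+1/\sigma^{2}}\,\|u-x\|\quad\text{whenever }u\in U,\ \|u-x\|<\delta.
\]
Testing at $u=x+tv$ for $t\downarrow 0$ and arbitrary $v\in\R^{n}$, dividing by $t$, and introducing $\eta_{t}:=(f(x+tv)-f(x))/t$ (uniformly bounded by $\|v\|/\sigma$), I would extract a cluster point $\eta$ and let $\varepsilon\downarrow 0$ to arrive at the pair
\[
\langle w,v\rangle\le\langle z,\eta\rangle,\qquad \sigma\|\eta\|^{2}\le\langle v,\eta\rangle,
\]
the second coming from cocoercivity between $x$ and $x+tv$ after passing to the limit. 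The second inequality is equivalent to $\eta$ lying in the closed ball of radius $\|v\|/(2\sigma)$ centered at $v/(2\sigma)$, on which $\eta\mapsto\langle z,\eta\rangle$ is bounded above by $\bigl(\langle z,v\rangle+\|z\|\|v\|\bigr)/(2\sigma)$. Combining yields $\langle 2\sigma w-z,v\rangle\le\|z\|\|v\|$ for every $v\in\R^{n}$, hence $\|2\sigma w-z\|\le\|z\|$; squaring and cancelling $\|z\|^{2}$ produces the target $\langle w,z\rangle\ge\sigma\|w\|^{2}$.

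For (ii)$\Rightarrow$(i), applying Cauchy--Schwarz to (ii) gives $\|w\|\le\|z\|/\sigma$ for every $w\in D^{*}f(x)(z)$; together with the Mordukhovich scalarization formula for locally Lipschitz mappings this forces $f$ to be $(1/\sigma)$-Lipschitz on $U$, hence differentiable almost everywhere by Rademacher. At any differentiability point $x$, (ii) reduces to $\langle\nabla f(x)^{\top}z,z\rangle\ge\sigma\|\nabla f(x)^{\top}z\|^{2}$ for all $z\in\R^{n}$, which a brief singular-value-decomposition computation shows is equivalent to the matrix cocoercivity $\langle\nabla f(x)d,d\rangle\ge\sigma\|\nabla f(x)d\|^{2}$ for all $d\in\R^{n}$. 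This pointwise matrix property is preserved under limits and, by convexity of $\|\cdot\|^{2}$, under convex combinations, and therefore extends to every element of Clarke's generalized Jacobian $\partial^{C}f(x)$. For arbitrary $x_{1},x_{2}\in U$ whose joining segment lies in $U$, setting $\phi(t):=f((1-t)x_{1}+tx_{2})$ yields a Lipschitz curve on $[0,1]$; the vector chain rule supplies $\phi'(t)\in\partial^{C}f(x_{t})(x_{2}-x_{1})$ at a.e.\ $t$, so matrix cocoercivity with $d=x_{2}-x_{1}$ gives $\langle\phi'(t),x_{2}-x_{1}\rangle\ge\sigma\|\phi'(t)\|^{2}$ a.e. Integration over $[0,1]$ combined with Jensen's inequality for the convex function $\|\cdot\|^{2}$ then converts this into the cocoercivity estimate (i).

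The main obstacle I expect lies in (i)$\Rightarrow$(iii): the crucial observation is that cocoercivity confines the cluster point $\eta$ to a Euclidean \emph{ball} rather than merely a half-space, and it is precisely this quadratic localization that upgrades the naive Cauchy--Schwarz bound $\|w\|\le\|z\|/\sigma$ into the stronger estimate $\langle w,z\rangle\ge\sigma\|w\|^{2}$. A further subtle point in (ii)$\Rightarrow$(i) is the SVD-based pointwise equivalence between the coderivative condition (involving $\nabla f^{\top}$) and matrix cocoercivity (involving $\nabla f$); without this symmetry the scalar chain-rule information could not be matched to the vector-norm target $\|\phi'(t)\|^{2}$.
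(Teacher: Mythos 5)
Your argument is correct, but it travels a genuinely different road from the paper's. The paper proves (i)$\Rightarrow$(ii) by a dual/scalarization device: it introduces the auxiliary scalar function $g_z(y)=\la z,2\sigma f(y)-y\ra$, shows from \eqref{cocoercive} that $g_z$ is Lipschitz with modulus $\|z\|$, and then the Lipschitz subgradient bound plus the coderivative scalarization formula yield $\|2\sigma w-z\|\le\|z\|$ directly for limiting coderivative elements; (ii)$\Rightarrow$(iii) is then trivial by inclusion, and (iii)$\Rightarrow$(i) is obtained by showing $f-\frac{1}{2\sigma}I$ is $\frac{1}{2\sigma}$-Lipschitz via the regular coderivative criterion and outsourcing the final step to Zhu--Marcotte. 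You instead run the cycle the other way: a primal difference-quotient computation for (i)$\Rightarrow$(iii), whose key point --- that cocoercivity pins the cluster direction $\eta$ inside the ball $\B_{\|v\|/(2\sigma)}(v/(2\sigma))$ rather than a half-space --- is exactly what recovers the same inequality $\|2\sigma w-z\|\le\|z\|$ without any auxiliary function; then (iii)$\Rightarrow$(ii) by the outer-limit construction of $D^*f$ (legitimate here since graph points of $f$ near $(x,f(x))$ all have base points in the open set $U$); and a self-contained (ii)$\Rightarrow$(i) via Rademacher, the transpose-invariance of matrix cocoercivity (cleanest through $\|2\sigma A-I\|=\|2\sigma A^{\top}-I\|$), stability of that property under limits and convex combinations (hence on all of $\partial^{C}f$), and integration along segments with Jensen. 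What your route buys is independence from the external cocoercivity lemma of Zhu--Marcotte and from the scalarization formula; what it costs is reliance on Rademacher and the Clarke chain rule for affine inner maps, which should be cited explicitly.

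Two small points to tighten. First, in (ii)$\Rightarrow$(i) the tool that converts the bound $\|w\|\le\|z\|/\sigma$ into $(1/\sigma)$-Lipschitz continuity is the coderivative criterion for Lipschitzian behavior (e.g., \cite[Theorem~4.7]{Mordukhovich06}, the same result the paper invokes), not the scalarization formula, which presupposes Lipschitz continuity. Second, your integration argument only reaches pairs $x_1,x_2$ whose joining segment lies in $U$; as stated for an arbitrary open $U$ the global estimate \eqref{cocoercive} does not follow (nor does it from the paper's own proof, which likewise globalizes a pointwise Lipschitz bound), so the convexity of $U$ is implicitly needed --- consistent with how the proposition is applied later to convex neighborhoods $U_\lambda$.
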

\begin{proof}
To verify implication (i)$\Longrightarrow$(ii), pick any $x\in U$, $z\in \R^n$, and $w\in D^*f(x)(z)$. By applying the Cauchy-Schwarz inequality to (i), we clearly get the Lipschitz continuity of $f$ on $U$ with modulus $\frac{1}{\sigma}$. Define the mapping $g_z\colon U\to\R$ by 
\begin{equation*}
g_z(y):=\big\la z,2\sigma f(y)-y\big\ra\;\text{ for all }\; y\in U.
\end{equation*}
We aim to show that $g_z$ is Lipschitz continuous on $U$ with modulus $\|z\|$ by taking arbitrary $x_1,x_2\in U$ and employing
the cocoercive condition \eqref{cocoercive} for them. This yields
\begin{equation*}
\left\|2 \sigma\left[f\left(x_1\right)-f\left(x_2\right)\right]-(x_1-x_2)\right\| \leq\left\|x_1-x_2\right\|,
\end{equation*}
and thus $\big|\left\langle z, 2 \sigma\left[f\left(x_1\right)-f\left(x_2\right)\right]-(x_1-x_2)\right\rangle\big| \leq \|z\|\cdot \left\|x_1-x_2\right\|$. By the definition of $g_z$, we get
\begin{equation*}
\left|g_z (x_1)-g_z (x_2)\right|\le \|z\|\cdot \|x_1-x_2\|,
\end{equation*}
which verifies the Lipschitz continuity of $g_z$ on $U$ with modulus $\|z\|$. Then the subdifferential estimate for Lipschitz continuous functions taken from \cite[Corollary~1.81]{Mordukhovich06} ensures that
\begin{equation}\label{Lip-sub}
\|x^*\|\le \|z\| \ \text{ for all }\ x^* \in \partial g_z (x).  
\end{equation}
Taking into account the structure of the function $g_z$ and the Lipschitz continuity of $f$ allows us to employ the limiting subdifferential sum rule from \cite[Theorem~1.107(ii)]{Mordukhovich06} and the scalarization formula for the limiting coderivative from \cite[Theorem~1.90]{Mordukhovich06} to obtain the relationships
\begin{eqnarray*}
2\sigma w-z &\in&  2\sigma D^* f(x)(z) +\nabla \la -z,\cdot\ra (x) = D^* (2\sigma f)(x)(z)+\nabla\la -z,\cdot\ra (x) = \partial\la z,2\sigma f\ra (x)+\nabla\la -z,\cdot\ra (x) \\
&=& \partial\la z,2\sigma f-\cdot\ra (x) = \partial g_z (x),
\end{eqnarray*}
bringing us to $2\sigma w-z \in \partial g_z (x)$. It follows from the basic subgradient estimate \eqref{Lip-sub} that $\|2\sigma w-z\|\le \|z\|$, which amounts to $4\sigma^2 \|w\|^2 -4\sigma \la w,z\ra \le 0$, i.e., $\la w,z\ra \ge \sigma \|w\|^2$, and thus (ii) holds. 

Implication (ii)$\Longrightarrow$(iii) is trivial since $\widehat{D}^*f(x)(z)\subset D^* f(x)(z)$ for all $x\in U$ and $z\in \R^n$.

Let us proceed next with verifying (iii)$\Longrightarrow$(i). Assuming the coderivative estimate \eqref{2nd-order-posi}, we intend to show that $f-\frac{1}{2\sigma}I$ is Lipschitz continuous on $U$ with modulus $\frac{1}{2\sigma}$, where $I$ is the identity operator on $\R^n$. To this end, pick any $x\in U$, $z\in \R^n$, and $w\in \widehat{D}^* \left(f-\frac{1}{2\sigma}I\right)(x)(z)$. Employing the aforementioned sum rule for the regular coderivative tells us that
\begin{equation*}
w\in \widehat{D}^* \left(f-\frac{1}{2\sigma}I\right)(x)(z) = \widehat{D}^* f(x)(z)-\frac{1}{2\sigma}z, 
\end{equation*}
which yields $w+\frac{1}{2\sigma}z\in \widehat{D}^* f(x)(z)$. Then we deduce from \eqref{2nd-order-posi} that
\begin{equation*}
\Big\la w+\frac{1}{2\sigma}z,z\Big\ra \ge \sigma \left\|w+\dfrac{1}{2\sigma}z\right\|^2,\text{ i.e., } \dfrac{\|z\|^2}{4\sigma}\ge \sigma \|w\|^2,
\end{equation*}
which is equivalent to $\|w\|\le \dfrac{1}{2\sigma}\|z\|$. It  follows from the regular coderivative characterization of the Lipschitz continuity in \cite[Theorem~4.7]{Mordukhovich06} that the mapping $f-\frac{1}{2\sigma}I$ is Lipschitz continuous on $U$ with modulus $\frac{1}{2\sigma}$. Using this and applying  \cite[Proposition~2.3]{Zhu96} verify the cocoercivity property  of $f$ in \eqref{cocoercive}, which gives us (i) and thus completes the proof of the proposition.
\end{proof}

\begin{Remark}\rm 
 {It is pointed out to us by one of the referees that Proposition \ref{cocoercive} can be deduced from \cite[Lemma~3.3~and~Theorem~3.4]{Nghia16} via the fact that $f$ is cocoercive if and only if $f^{-1}$ is strongly monotone. Still, we present a direct proof of this proposition for the sake of self-containment.}
\end{Remark}

Next we present a sufficient condition for the cocoercivity of the proximal mappings.

\begin{Proposition}\label{coro:cocoprox}
Let $\varphi:\R^n \rightarrow \overline{\R}$ be prox-bounded and prox-regular at $\ox\in \dom \varphi$ for $\ov\in \partial \varphi (\ox)$ with respect to a radius $\ve>0$. Suppose that there exists a neighborhood $W$ of $(\ox,\ov)$ for which  
\begin{eqnarray}\label{eq:2nd-D^}
\la z,w\ra \ge 0\ \text{ whenever }\ z\in \Hat{D}^* T(u,v)(w),\ w\in \R^n\ \text{ and } \ (u,v)\in \gph T \cap W,
\end{eqnarray}
where $T$ is the $\varphi$-attentive $\ve$-localization of $\partial\varphi$ around $(\ox,\ov)$ taken from Proposition \ref{C11}. Then for any sufficiently small $\lambda>0$ the proximal mapping $\text{\rm Prox}_{\lambda\varphi}$ is cocoercive with modulus $1$ on a neighborhood of $\ox+\lambda \ov$.
\end{Proposition}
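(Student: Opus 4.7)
The plan is to derive the cocoercivity assertion by applying Proposition~\ref{prop:coco} in its implication (iii)$\Rightarrow$(i) to the mapping $f:=\text{\rm Prox}_{\lambda\varphi}$ with modulus $\sigma=1$. Proposition~\ref{C11} supplies, for each sufficiently small $\lambda>0$, a convex neighborhood $U_\lambda$ of $\ox+\lambda\ov$ on which $\text{\rm Prox}_{\lambda\varphi}$ is single-valued, Lipschitz continuous, and admits the representation $\text{\rm Prox}_{\lambda\varphi}=(I+\lambda T)^{-1}$. So the goal reduces, possibly after shrinking $U_\lambda$, to establishing the regular coderivative estimate $\langle z,w\rangle\ge\|w\|^2$ for all $x\in U_\lambda$, $z\in\R^n$, and $w\in\widehat D^*\text{\rm Prox}_{\lambda\varphi}(x)(z)$.

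The key computation is to translate this estimate on $\text{\rm Prox}_{\lambda\varphi}$ into one on $T$, where hypothesis \eqref{eq:2nd-D^} can be used. Fix $x\in U_\lambda$, set $p:=\text{\rm Prox}_{\lambda\varphi}(x)$, and note that $v:=\lambda^{-1}(x-p)\in T(p)$. The inverse rule for regular coderivatives yields $w\in\widehat D^*\text{\rm Prox}_{\lambda\varphi}(x,p)(z)$ if and only if $-z\in\widehat D^*(I+\lambda T)(p,x)(-w)$. Combining the sum rule for $I+\lambda T$ (exact because $I$ is smooth single-valued) with the scaling identity $\widehat D^*(\lambda T)(p,\lambda v)(u)=\widehat D^*T(p,v)(\lambda u)$ gives
$$
w-z\in\widehat D^*T(p,v)(-\lambda w).
$$
Provided that $(p,v)\in\gph T\cap W$, the standing assumption \eqref{eq:2nd-D^} then forces $\langle w-z,-\lambda w\rangle\ge 0$, which rearranges into $\langle z,w\rangle\ge\|w\|^2$, precisely the desired coderivative estimate with $\sigma=1$.

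The main delicate point, and where I expect the bookkeeping to be most careful, is to secure the inclusion $(p,v)\in\gph T\cap W$ uniformly for $x$ in some neighborhood of $\ox+\lambda\ov$. Here I would invoke Proposition~\ref{C11}(ii), which gives $\text{\rm Prox}_{\lambda\varphi}(\ox+\lambda\ov)=\ox$, together with the Lipschitz continuity of $\text{\rm Prox}_{\lambda\varphi}$ on $U_\lambda$. This ensures that $p\to\ox$ and hence $v=\lambda^{-1}(x-p)\to\ov$ as $x\to\ox+\lambda\ov$, so that shrinking $U_\lambda$ further places $(p,v)$ inside $W$; the $\varphi$-attentive structure of $T$ guarantees $v\in T(p)$ by construction, via the identity $\text{\rm Prox}_{\lambda\varphi}=(I+\lambda T)^{-1}$. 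Applying the implication (iii)$\Rightarrow$(i) of Proposition~\ref{prop:coco} on the shrunk neighborhood then delivers cocoercivity of $\text{\rm Prox}_{\lambda\varphi}$ with modulus $1$, completing the argument. Apart from this neighborhood control, the only other thing to check is that the sum, inverse, and scaling rules for the regular coderivative apply with equality in this setting, which is a routine consequence of the smoothness of the identity mapping $I$ and of the scaling definition of $\gph(\lambda T)$.
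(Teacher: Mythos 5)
Your argument is correct and follows essentially the same route as the paper's proof: apply Proposition~\ref{prop:coco}, implication (iii)$\Rightarrow$(i), to $\text{\rm Prox}_{\lambda\varphi}$ with $\sigma=1$, use the representation $\text{\rm Prox}_{\lambda\varphi}=(I+\lambda T)^{-1}$ from Proposition~\ref{C11} together with the inverse, sum, and scaling rules for the regular coderivative to transfer the estimate to $T$, and shrink the neighborhood of $\ox+\lambda\ov$ so that the continuous pair $\big(\text{\rm Prox}_{\lambda\varphi}(x),\lambda^{-1}(x-\text{\rm Prox}_{\lambda\varphi}(x))\big)$ stays in $\gph T\cap W$, exactly as the paper does with its mapping $\Phi$. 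The only step you omit, which the paper settles in one line via Lemma~\ref{prox-bounded}, is that Proposition~\ref{C11} requires prox-boundedness of $\varphi$ (not assumed in the statement), so one should first reduce without loss of generality to the prox-bounded function $\varphi+\delta_{B_\varepsilon(\ox)}$.
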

\begin{proof}
By Proposition~\ref{C11}, for any $\lambda>0$ sufficiently small, there exists a neighborhood $U_{\lambda}$ of $\ox+\lambda \ov$ on which assertions (i)--(iii) of Proposition~~\ref{C11} hold in terms of $T$. Fixing such a small number $\lambda$, we claim that $\text{\rm Prox}_{\lambda\varphi}$ is cocoercive with modulus $\sigma=1$ on $U_{\lambda}$ by employing Proposition \ref{prop:coco}. Note first the continuity of ${\rm Prox}_{\lm\ph}$ on $U_{\lambda}$ yields this property for the mapping $\Phi: U_{\lambda}\rightarrow\R^n\times\R^n$ defined by
\begin{eqnarray*}
y\mapsto\left( {\rm Prox}_{\lm\ph}(y),\dfrac{y-{\rm Prox}_{\lm\ph}(y)}{\lambda}\right).
\end{eqnarray*}
Observe that $\Phi (\ox+\lambda \ov) = (\ox,\ov)\in W$ and then shrink if necessary the neighborhood $U_{\lambda}$ of $\ox +\lambda \ov$ so that $\Phi (U_{\lambda})\subset W$. Picking any $y\in U_{\lambda}$, let $x:= {\rm Prox}_{\lm\ph}(y)$ and $z\in \widehat{D}^*{\rm Prox}_{\lm\ph}(y)(w)$ for $w\in \R^n$. This implies by Proposition~\ref{C11}(ii) and the coderivative sum rule from \cite[Theorem~1.62(i)]{Mordukhovich06} that
\begin{eqnarray*}
-w \in \widehat{D}^* ({\rm Prox}_{\lm\ph})^{-1}(x,y)(-z) &=& \widehat{D}^* (I+\lambda T)(x,y)(-z) = -z + \widehat{D}^*(\lambda T)(x,y-x)(-z) \\
&=& -z + \lambda \widehat{D}^* T\left(x,\dfrac{y-x}{\lambda}\right)(-z). 
\end{eqnarray*}
Therefore, we arrive at the coderivative inclusion
\begin{equation}\label{use-2nd}
\dfrac{z-w}{\lambda}\in \widehat{D}^* T\left(x,\dfrac{y-x}{\lambda}\right)(-z). 
\end{equation}
Since $\left(x,\dfrac{y-x}{\lambda}\right) = \Phi (y) \in W$, it follows from \eqref{2nd-D^} and \eqref{use-2nd} that $\big\la \frac{z-w}{\lambda},-z\big\ra \ge 0$, i.e., 
\begin{equation}\label{eq:2nd-growth}
\la w,z\ra \ge \|z\|^2\;\text{ whenever }\;z\in \widehat{D}^* \text{\rm Prox}_{\lambda\varphi}(y)(w).
\end{equation}
Applying now Proposition~\ref{prop:coco} to the continuous mapping $\text{\rm Prox}_{\lambda\varphi}$ on $U_{\lambda}\ni \ox + \lambda \ov$ satisfying the coderivative estimate \eqref{eq:2nd-growth} tells us that $\text{\rm Prox}_{\lambda\varphi}$ is cocoercive with modulus $\sigma=1$ on $U_{\lambda}$. 
\end{proof}

Now we are ready to establish the new coderivative-based  {\em characterizations of variational convexity}. Note that characterizations of this type first appeared in \cite{kmp22convex} while validating subdifferential continuity.

\begin{Theorem}\label{theo:codforvc}
Let $\varphi:\R^n \rightarrow \overline{\R}$ be an l.s.c. function with $\ox\in \dom \varphi$ and $\ov\in \partial \varphi (\ox)$. Then the following assertions are equivalent:

{\bf(i)} $\varphi$ is variationally convex at $\ox$ for $\ov$.

{\bf(ii)} $\varphi$ is prox-regular at $\ox$ for $\ov$ with respect to a radius $\ve>0$, and there is a neighborhood $W$ of $(\ox,\ov)$ such that 
\begin{eqnarray}\label{2nd-D*}
\la z,w\ra \ge 0\ \text{ whenever }\ z\in D^*T^{\varphi}(u,v)(w)\ \text{ and } \ (u,v)\in \gph T^{\varphi} \cap W.
\end{eqnarray}

{\bf(iii)} $\varphi$ is prox-regular at $\ox$ for $\ov$ with respect to a radius $\ve>0$, and there is a neighborhood $W$ of $(\ox,\ov)$ such that 
\begin{eqnarray}\label{2nd-D^}
\la z,w\ra \ge 0\ \text{ whenever }\ z\in \Hat{D}^* T^{\varphi}(u,v)(w)\ \text{ and } \ (u,v)\in \gph T^{\varphi} \cap W,
\end{eqnarray}
where $T^{\varphi}$ is the $\varphi$-attentive $\varepsilon$-localization of $\partial\varphi$ around $(\ox,\ov)$. If in addition we suppose that $\varphi$ is subdifferentially continuous at $\ox$ for $\ov$, then the coderivatives $D^* T^{\varphi}(u,v)$ and $\Hat{D}^* T^{\varphi}(u,v)$ in \eqref{2nd-D*} and \eqref{2nd-D^} can be replaced by the second-order subdifferentials $\partial^2 \varphi(u,v)$ and $\breve{\partial}^2\varphi(u,v)$, respectively. 
\end{Theorem}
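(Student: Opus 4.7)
My plan is to establish the cyclic chain (i)~$\Ra$~(ii)~$\Ra$~(iii)~$\Ra$~(i), with the implication (iii)~$\Ra$~(i) being the main substance of the argument and relying on Proposition~\ref{coro:cocoprox}.

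For (i)~$\Ra$~(ii), I would unpack variational convexity to obtain the l.s.c.\ convex minorant $\psi$ whose subgradient graph locally matches that of $\partial\varphi$ on $U_\varepsilon\times V$. This identification yields two things simultaneously: first, $\varphi$ is prox-regular at $\ox$ for $\ov$, because $\partial\psi$ trivially satisfies the prox-regularity inequality (with $r=0$) by convexity of $\psi$; second, on a small neighborhood $W$ of $(\ox,\ov)$ the $\varphi$-attentive $\varepsilon$-localization $T$ coincides with a localization of the monotone operator $\partial\psi$. The classical positive-semidefiniteness of the limiting coderivative of the subdifferential of a convex function then produces~\eqref{2nd-D*} for $T$. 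Implication (ii)~$\Ra$~(iii) is immediate from $\widehat{N}\subset N$, which gives $\widehat{D}^*T(u,v)(w)\subset D^*T(u,v)(w)$ everywhere on $\gph T$.

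The crux is (iii)~$\Ra$~(i). I would apply Proposition~\ref{coro:cocoprox}, whose hypothesis is exactly assertion (iii), to obtain for each sufficiently small $\lm>0$ that $\text{\rm Prox}_{\lm\varphi}$ is cocoercive with modulus $1$ on some neighborhood $U_\lm$ of $\ox+\lm\ov$. Using the identity $\text{\rm Prox}_{\lm\varphi}=(I+\lm T)^{-1}$ from Proposition~\ref{C11}(ii), I would unwind this cocoercivity into local monotonicity of $T$: for any $y_1,y_2\in U_\lm$ with $x_i:=\text{\rm Prox}_{\lm\varphi}(y_i)$ and $v_i:=(y_i-x_i)/\lm\in T(x_i)$, the cocoercivity inequality
\begin{equation*}
\la y_1-y_2,\,x_1-x_2\ra\ge\|x_1-x_2\|^2
\end{equation*}
rearranges, since $y_i-x_i=\lm v_i$, to $\lm\la v_1-v_2,\,x_1-x_2\ra\ge 0$. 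Thus $T$ is monotone on a neighborhood of $(\ox,\ov)$ in its graph. Combining this with the prox-regularity supplied by (iii) and invoking the known characterization of variational convexity by prox-regularity together with local monotonicity of the attentive localization, as developed in~\cite{KKMP23-2,kmp22convex}, yields the variational convexity of $\varphi$ at $\ox$ for~$\ov$.

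For the additional statement under subdifferential continuity, any sequence $(x_k,v_k)\to(\ox,\ov)$ with $v_k\in\partial\varphi(x_k)$ automatically satisfies $\varphi(x_k)\to\varphi(\ox)$, so the attentive restriction $\varphi(x)<\varphi(\ox)+\ve$ becomes redundant for points of $\gph\partial\varphi$ sufficiently close to $(\ox,\ov)$; hence $T$ coincides locally with $\partial\varphi$ itself, and the coderivatives $D^*T(u,v)$ and $\widehat{D}^*T(u,v)$ become $\partial^2\varphi(u,v)$ and $\breve{\partial}^2\varphi(u,v)$ respectively. I expect the hardest technical point to be ensuring that the successive neighborhoods produced by Proposition~\ref{coro:cocoprox} and by the variational-convexity characterization line up consistently around $(\ox,\ov)$; the algebraic passage from cocoercivity of the proximal mapping to monotonicity of $T$ is straightforward once the correct substitution is made.
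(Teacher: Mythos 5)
Your proposal is correct and follows essentially the same route as the paper: (i)$\Rightarrow$(ii) by locally identifying the $\varphi$-attentive localization $T$ with $\partial\psi$ for the convex minorant $\psi$ and invoking positive semidefiniteness of second-order constructions of convex functions, (ii)$\Rightarrow$(iii) trivially, and (iii)$\Rightarrow$(i) via Proposition~\ref{coro:cocoprox}, unwinding cocoercivity of $\text{\rm Prox}_{\lambda\varphi}=(I+\lambda T)^{-1}$ into local $\varphi$-monotonicity of $\partial\varphi$ and then applying the monotonicity characterization of variational convexity from \cite{KKMP23-2}, exactly as in the paper's proof. The neighborhood bookkeeping you flag (passing from $y_i\in U_\lambda$ to all graph points $(u,v)$ of $T$ near $(\ox,\ov)$ via $y=u+\lambda v$) is precisely what the paper handles with the homeomorphism $\Theta_\lambda$, and your treatment of the subdifferentially continuous case matches the paper's.
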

\begin{proof}
To verify implication (i)$\Longrightarrow$(ii), we get by Definition~\ref{vr} of variational convexity a number $\varepsilon>0$, a neighborhood $U\times V$ of $(\ox,\ov)$, and a convex l.s.c.\ function $\psi:\R^n \rightarrow \overline{\R}$ such that
\begin{equation}\label{gph-vc}
(U_{\varepsilon}\times V)\cap \gph \partial \varphi = (U\times V)\cap \gph \partial \psi
\end{equation}
with $U_{\varepsilon}$ defined therein. We shrink $U,V$, if necessary, so that $U\subset B_\ve(\ox)$ and $V\subset B_\ve(\ov)$. Let $T^{\varphi}$ be the $\varphi$-attentive $\varepsilon$-localization of $\partial \varphi$ around $(\ox,\ov)$ taken from \eqref{attentive}. Then we have the inclusion $(U_{\varepsilon}\times V) \cap \gph \partial \varphi \subset \gph T^{\varphi}$. Pick further $(u,v)\in (U\times V)\cap \gph T^{\varphi}$, $w\in \R^n$ and $z\in D^*T^{\varphi}(u,v)(w)$. Then it follows from definition \eqref{lcod} of the limiting coderivative that there are sequences $z_k\rightarrow z$, $w_k\rightarrow w$, $(u_k,v_k)\xrightarrow{\mathrm{gph} T^{\varphi}\cap (U\times V)}(u,v)$ for which $z_k \in \widehat{D}^*T^{\varphi} (u_k,v_k)(w_k)$ for all $k\in \N$ large enough. Using the definitions of $T^{\varphi}$, $U$, and $V$ implies that $(u_k,v_k)\in (U_{\varepsilon}\times V)\cap \gph \partial \varphi = (U\times V)\cap \gph \partial \psi$. Consequently, the choice of $z_k\in \widehat{D}^*T^{\varphi}(u_k,v_k)(w_k)$ ensures that 
\begin{equation*}
(z_k,-w_k)\in \widehat{N}_{\mathrm{gph}\,T^{\varphi}}(u_k,v_k) \subset \widehat{N}_{(U_{\varepsilon}\times V)\cap \mathrm{gph}\, \partial \varphi}(u_k,v_k) = \widehat{N}_{(U\times V)\cap \mathrm{gph}\,\partial \psi}(u_k,v_k) = \widehat{N}_{\mathrm{gph}\, \partial \psi}(u_k,v_k),
\end{equation*}
and hence $z_k\in \widehat{D}^* (\partial\psi)(u_k,v_k)(w_k) = \breve{\partial}^2 \psi (u_k,v_k)(w_k)$. Then we deduce from \cite[Theorem~3.2]{CCYY} that $\la z_k,w_k\ra \ge 0$ for all $k\in\N$ sufficiently large, which thus completes the proof of implication (i)$\Longrightarrow$(ii) by passing to the limit as $k\rightarrow \infty$. The next implication (ii)$\Longrightarrow$(iii) is trivial since the regular coderivative is always contained in the corresponding limiting one.

Finally, we check implication (iii)$\Longrightarrow$(i). Considering the localization function $\psi:=\varphi+\delta_{B_{\ve}(\ox)}$, we get by Remark \ref{prox-bounded} that $\psi$ is prox-bounded and prox-regular at $\ox$ for $\ov\in \partial\psi (\ox)$ with respect to the radius $\ve>0$. Further, one may easily check that $\gph T^{\varphi} = \gph T^{\psi}$, where 
\begin{equation*}
    \gph T^{\psi}:= \{(x,v)\in \gph \partial \psi \mid \|x-\ox\|<\ve,\|v-\ov\|<\ve \text{ and } |\varphi (x) - \varphi (\ox)|<\ve\}.
\end{equation*}
This implies via \eqref{2nd-D^} that 
\begin{equation}\label{2nd-D}
    \la z,w\ra \ge 0\ \text{ whenever }\ z\in \Hat{D}^* T^{\psi}(u,v)(w)\ \text{ and } \ (u,v)\in \gph T^{\psi} \cap W.
\end{equation}
Then Proposition \ref{C11} and Proposition \ref{coro:cocoprox} together ensure that for any small $\lambda>0$, there is a neighborhood $U_{\lambda}$ of $\ox+\lambda \ov$ on which assertions (i)--(iii) of Proposition~\ref{C11} hold in terms of $\psi$, and moreover, the proximal mapping $\text{\rm Prox}_{\lambda\psi}$ is cocoercive with modulus $1$ on $U_{\lambda}$.

Next we claim that the subgradient mapping $\partial\varphi$ is locally $\varphi$-monotone around $(\ox,\ov)$ in the sense that there exists a neighborhood $U\times V$ of $(\ox,\ov)$ such that $\partial\ph$ is monotone on $U_\ve\times V$, where $U_\ve$ is 
taken from Definition~\ref{vr}. To proceed, define the homeomorphism $\Theta_{\lambda}: \R^n \times \R^n \rightarrow \R^n \times \R^n$ by $(x,v)\mapsto (x+\lambda v,x)$ and observe that
\begin{equation*}
\Theta_{\lambda}(\gph T^{\psi}) =\gph (I+\lambda T^{\psi})^{-1}\;\mbox{ and }\;\Theta_{\lambda}^{-1}(\ox+\lambda \ov,\ox) = (\ox,\ov)
\end{equation*} 
with $\Theta_{\lambda}^{-1}(U_{\lambda}\times \R^n)\cap \big(B_{\varepsilon}(\ox)\times B_{\varepsilon}(\ov)\big)$ being
a neighborhood of $(\ox,\ov)$. Select two pairs $(u_i,v_i)\in \gph \partial \varphi\cap \Theta_{\lambda}^{-1}(U_{\lambda}\times \R^n)\cap \big(B_{\varepsilon}(\ox)\times B_{\varepsilon}(\ov)\big)$ such that $\varphi(u_i)<\varphi (\ox) + \varepsilon$, which ensures that $(u_i,v_i)\in \gph T^{\psi}$ for $i=1,2$. By $\text{\rm Prox}_{\lambda\psi} = (I+\lambda T^{\psi})^{-1}$ on $U_{\lambda}$, we have the equalities
\begin{eqnarray*}
(u_i + \lambda v_i,u_i) &=& \Theta_{\lambda}(u_i,v_i)\in \Theta_{\lambda} \big( \gph T^{\psi} \cap \Theta_{\lambda}^{-1}(U_{\lambda}\times \R^n)\big) =\Theta_{\lambda} (\gph T^{\psi}) \cap \Theta_{\lambda}\big(\Theta_{\lambda}^{-1}(U_{\lambda}\times \R^n)\big) \\
&=& \gph (I+\lambda T^{\psi})^{-1} \cap \big(U_{\lambda}\times \R^n\big)  =\gph \text{\rm Prox}_{\lambda\psi} \cap (U_{\lambda}\times \R^n),\quad i=1,2.
\end{eqnarray*}
Since the mapping $\text{\rm Prox}_{\lambda\psi}$ is cocoercive with modulus $1$ on $U_{\lambda}$, it follows that
\begin{equation*}
\la u_1-u_2,u_1 - u_2 +\lambda (v_1-v_2)\ra \ge \|u_1-u_2\|^2.
\end{equation*}
This yields $\la u_1-u_2,v_1-v_2\ra \ge 0$ and thus justifies the local $\varphi$-monotonicity of $\partial \varphi$ around $(\ox,\ov)$. Applying now \cite[Theorem~6.6]{KKMP23-2} tells us that $\ph$ is variationally convex at $\ox$ for $\ov$.

It remains to show that $D^*T(u,v)$ and ${\Hat D}^*T(u,v)$ can be replaced by $\partial^2 \varphi(u,v)$ and $\breve\partial^2 \varphi(u,v)$, respectively, provided that $\ph$ is subdifferentially continuous at $\ox$ for $\ov$. Indeed, the latter property allows us to shrink $W$ if necessary so that $\gph T = \gph \partial\varphi\cap W$. This tells us that 
$$\partial^2 \varphi (u,v) = D^* T(u,v) \ \text{ and }\ 
\breve{\partial}^2\varphi(u,v) = \widehat{D}^*T(u,v)\;\text{for all }\; (u,v) \in W
$$
and therefore completes the proof of the theorem. 
\end{proof}

\begin{Remark}\rm
As mentioned prior to Theorem \ref{theo:codforvc}, second-order subdifferential characterizations of variational convexity are obtained in \cite[Theorem~5.2]{kmp22convex} under the subdifferential continuity assumption. Observe that the approach employed in deriving the result of \cite[Theorem~5.2]{kmp22convex} can be adapted to provide an alternative proof of Theorem~\ref{theo:codforvc}. The key tools in both approaches leverage the prox-regularity of the given function revolve around establishing well-posed properties of the corresponding Moreau envelope and proximal mapping. In both proofs, the ultimate goal is to verify the local convexity of the Moreau envelope, which yields the variational convexity of the function in question via \cite[Theorem~3.2]{kmp22convex}. Nevertheless, the local convexity of $e_{\lambda}\varphi$ is achieved in these devices differently. Indeed, the proof above employs Proposition~\ref{prop:coco} to justify the cocoercivity of $\text{\rm Prox}_{\lambda}\varphi$, which leads us to the $\varphi$-local monotonicity of $\partial\varphi$ and then to the local convexity of $e_{\lambda}\varphi$ by \cite[Theorem~6.6]{KKMP23-2} under imposing prox-regularity. On the other hand, the proof in \cite[Theorem~5.2]{kmp22convex} verifies the local convexity of $e_{\lambda}\varphi$ via the second-order regular subdifferential condition from \cite[Lemma~5.1]{kmp22convex}.
\end{Remark}\vspace*{-0.05in}

As a consequence of Theorem~\ref{theo:codforvc}, we arrive at the following new {\em second-order subdifferential sufficient conditions} for local minimizers of extended-real-valued prox-regular functions.

\begin{Corollary}\label{cor:2nd-suff} Let $\varphi:\R^n\to\overline{\R}$ be prox-regular at $\bar{x}\in\dom\varphi$ for $0\in\partial\varphi(\bar{x})$ with respect to a radius $\ve>0$. Then $\bar{x}$ is a local minimizer of $\varphi$ if there exists a neighborhood $W$ of $(\bar{x},0)$ such that
\begin{equation}\label{eq:neighborlocalmin}
\langle z,w\rangle\ge 0\;\text{ whenever }\;z\in\widehat{D}^*T(u,v)(w),\; (u,v)\in\gph T \cap(U\times V),\;w\in\R^n,
\end{equation}
where $T$ is the $\varphi$-attentive $\epsilon$-localization of $\partial\varphi$ around $(\ox,0)$. If in addition that $\varphi$ is subdifferentially continuous at $\ox$ for $\ov$, then $\Hat{D}^* T(u,v)$ in \eqref{eq:neighborlocalmin} can be replaced by $\breve{\partial}^2\varphi(u,v)$.
\end{Corollary}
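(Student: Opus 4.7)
My strategy is to recognize that the hypothesis \eqref{eq:neighborlocalmin} is exactly assertion (iii) of Theorem~\ref{theo:codforvc} applied at $\bar{v}=0$, and then to harvest local minimality directly from the variational convexity it yields. Since variational convexity compares $\varphi$ with a genuine convex minorant that agrees with $\varphi$ at the reference point, optimality will drop out with essentially no further work.

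First I would invoke the implication (iii)$\Longrightarrow$(i) of Theorem~\ref{theo:codforvc} with $\bar{v}=0$. The input hypotheses match: $\varphi$ is prox-regular at $\bar{x}$ for $0$, and the coderivative inequality on the $\varphi$-attentive $\epsilon$-localization $T$ of $\partial\varphi$ holds on a neighborhood of $(\bar{x},0)$. The conclusion furnishes a convex neighborhood $U\times V$ of $(\bar{x},0)$, a number $\varepsilon'>0$, and an l.s.c.\ convex function $\psi\le\varphi$ on $U$ for which
\begin{equation*}
(U_{\varepsilon'}\times V)\cap\gph\partial\varphi \;=\;(U\times V)\cap\gph\partial\psi,
\end{equation*}
together with $\varphi(x)=\psi(x)$ at every common pair $(x,v)$, where $U_{\varepsilon'}=\{x\in U\mid\varphi(x)<\varphi(\bar{x})+\varepsilon'\}$.

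Next I would verify that $(\bar{x},0)$ itself is a common pair: plainly $\bar{x}\in U_{\varepsilon'}$ because $\varphi(\bar{x})<\varphi(\bar{x})+\varepsilon'$, and $0\in\partial\varphi(\bar{x})\cap V$ by hypothesis. Therefore $(\bar{x},0)\in\gph\partial\psi$ and $\varphi(\bar{x})=\psi(\bar{x})$. Since $\psi$ is convex on $U$ and $0\in\partial\psi(\bar{x})$, the point $\bar{x}$ is a global minimizer of $\psi$ on $U$. Combining $\psi\le\varphi$ on $U$ with $\psi(\bar{x})=\varphi(\bar{x})$ yields
\begin{equation*}
\varphi(x)\;\ge\;\psi(x)\;\ge\;\psi(\bar{x})\;=\;\varphi(\bar{x})\qquad\text{for all }x\in U,
\end{equation*}
which is exactly the local minimizer property. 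For the addendum under subdifferential continuity, the last sentence of Theorem~\ref{theo:codforvc} allows $\widehat{D}^*T(u,v)$ to be replaced by $\breve{\partial}^2\varphi(u,v)$, so the very same argument goes through without modification.

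I do not anticipate a real obstacle here, because the entire analytical weight — recovering a convex minorant from the one-sided coderivative estimate, including the passage through Moreau envelopes, proximal cocoercivity, and $\varphi$-local monotonicity — has already been discharged in Theorem~\ref{theo:codforvc}. The only point that requires a moment of care is checking that $(\bar{x},0)$ actually lies in the common-pair set $(U_{\varepsilon'}\times V)\cap\gph\partial\varphi$ so that the equality $\varphi(\bar{x})=\psi(\bar{x})$ may be used; this is immediate from the stationarity hypothesis $0\in\partial\varphi(\bar{x})$ and the strict inequality $\varphi(\bar{x})<\varphi(\bar{x})+\varepsilon'$.
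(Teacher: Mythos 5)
Your proposal is correct and follows the same route as the paper: the paper's proof simply cites Theorem~\ref{theo:codforvc} together with the ``easily verifiable fact'' that variational convexity at $\ox$ for $0$ implies local minimality, and your argument is precisely the verification of that fact (checking that $(\ox,0)$ is a common pair, so $0\in\partial\psi(\ox)$ for the convex minorant $\psi$, whence $\varphi\ge\psi\ge\psi(\ox)=\varphi(\ox)$ on $U$).
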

\begin{proof} This follows from Theorem~\ref{theo:codforvc} due to the easily verifiable fact that the variational convexity of $\ph$ at $\ox$ for $0\in\partial\ph(\ox)$ implies that $\ox$ is a local minimizer of $\ph$.
\end{proof} 

Note that $\ox$ may be a local minimizer of a function $\ph\colon\R^n\to\oR$ that is not variationally convex at $\ox$ for $0\in\partial\ph(\ox)$. An example of such a function $\ph\colon\R\to\R$, which is continuous on $\R$ but not prox-regular at $\ox$ for $0$, is presented in \cite[Remark~2.8(i)]{kmp22convex}. Furthermore, replacing in Corollary~\ref{cor:2nd-suff} the constructions $\Hat{D}^*T(u,v)$ and $\breve{\partial}^2\varphi(u,v)$ by $D^*T(u,v)$ and $\partial^2\ph(u,v)$, respectively, also gives us sufficient conditions for local minimizers of $\ph$ since the latter sets are larger than the former ones.
\vspace*{-0.12in}

\section{Second-Order Conditions for Strong Local Minimizers}\label{sec:strong}\vspace*{-0.05in}

In this section, we derive a counterpart of Theorem~\ref{PSDnecessary} as a second-order subdifferential {\em necessary} optimality conditions for {\em strong} local minimizers of prox-regular functions and provide further discussions of the {\em no-gap sufficiency} of such conditions and other relationships for this class of minimizers. 

Recall that $\ox$ is a {\em strong local minimizer} of an extended-real-valued function $\varphi:\R^n\to\oR$ with {\em modulus} $\sigma>0$ if $\ox\in\dom\varphi$ and  there exists a neighborhood $U$ of $\ox$ such that
$$
\varphi(x) \geq  \varphi(\ox) +\frac{\sigma}{2}\|x-\ox\|^2\;\text{ for all }\; x \in U.  
$$ 
Define further the {\em quadratically $\sigma$-shifted function} associated with $\varphi$  by
\begin{equation}\label{shift}
\psi(x):=\varphi(x) -\frac{\sigma}{2}\|x-\ox\|^2\;\text{ for all } \ x\in\R^n.
\end{equation}  
The following relationship between local and strong local minimizers is obvious.

\begin{Lemma}\label{lem:localminishift} Let $\varphi: \R^n\rightarrow\overline{\R}$ be an arbitrary extended-real-valued function, and let $\sigma>0$. Then  $\ox$ is a strong local minimizer of $\varphi$ with modulus $\sigma$ if and only if $\ox$ is a local minimizer of the quadratically $\sigma$-shifted function $\psi$ defined in \eqref{shift}.
\end{Lemma}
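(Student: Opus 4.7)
The plan is essentially to unfold both definitions on a common neighborhood and observe that the two inequalities are algebraic rearrangements of one another. First I would note that the quadratic shift vanishes at $\ox$, so that $\psi(\ox)=\varphi(\ox)$; this is the key normalization that makes the equivalence clean.

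Next, suppose $\ox$ is a strong local minimizer of $\varphi$ with modulus $\sigma$, so that on some neighborhood $U$ of $\ox$ we have $\varphi(x)\ge\varphi(\ox)+\tfrac{\sigma}{2}\|x-\ox\|^2$. Subtracting $\tfrac{\sigma}{2}\|x-\ox\|^2$ from both sides and using $\psi(\ox)=\varphi(\ox)$ gives $\psi(x)\ge\psi(\ox)$ on $U$, which is precisely the local minimizer property for $\psi$. Conversely, if $\psi(x)\ge\psi(\ox)$ on some neighborhood $U$, adding $\tfrac{\sigma}{2}\|x-\ox\|^2$ to both sides and invoking $\psi(\ox)=\varphi(\ox)$ recovers the quadratic growth inequality that defines a strong minimizer with modulus $\sigma$.

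There is no genuine obstacle here; the statement is a direct consequence of the definition of $\psi$ and the fact that $\|x-\ox\|^2$ vanishes at $\ox$. The only thing to be careful about is that the \emph{same} neighborhood $U$ works in both directions, which is immediate since no shrinking is required in either passage.
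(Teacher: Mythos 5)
Your argument is correct and is exactly the one the paper has in mind: the paper states this lemma without proof, calling the relationship obvious, and your unfolding of the definitions via $\psi(\ox)=\varphi(\ox)$ and the algebraic rearrangement on a common neighborhood $U$ is precisely that obvious argument.
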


The above observation can be used to derive a necessary condition for strong local minimizers of $\ph$ in terms of the combined second-order subdifferentials from the one established in Theorem~\ref{NecessI} for standard local minimizers. To proceed in this direction, we need an appropriate sum rule for regular coderivatives, which incorporates the summation representation of $\ph$ in \eqref{shift} due to the combined second-order subdifferential structure in Definition~\ref{2ndsub}(ii). The new {\em regular coderivative sum rule} obtained below is of its own interest while being instrumental to achieve our goal.

\begin{Lemma}\label{sumrulecombine} Let $f:\R^n \to \R^m$,  let $F\colon\R^n\tto\R^m$, and let $\ov \in (f +F)(\ox)$ for some $\ox\in\R^n$. Assume that $f$ is {\em calm} at $\ox$ meaning that there exist $\ell >0$ and a neighborhood $U$ of $\ox$ such that 
\begin{equation}\label{calmF}
\|f(x) -f(\ox)\| \leq \ell \|x-\ox\|\;\text{ for all }\;x \in U. 
\end{equation}
Then we have the following inclusion:
\begin{equation}\label{reg-cod-sum}
\widehat{D}^*f (\ox)(w) + \widehat{D}^* F\big(\ox, \ov -f(\ox)\big)(w) \subset \widehat{D}^*(f+F)(\ox, \ov)(w)\;\mbox{ whenever }\;w \in \R^n. 
\end{equation}
If $f$ is Fr\'echet differentiable at $\ox$, then \eqref{reg-cod-sum}  holds as an equality. 
\end{Lemma}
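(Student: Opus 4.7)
The plan is to prove the inclusion \eqref{reg-cod-sum} directly from the $\liminf$-description of the regular coderivative via the regular normal cone, and then derive the equality case in the Fréchet differentiable setting by a symmetric application of the inclusion to $F = (f+F) + (-f)$.

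To establish the inclusion, fix $w\in\R^n$ and pick arbitrary $z_1\in\widehat D^*f(\ox)(w)$ and $z_2\in\widehat D^*F(\ox,\ov-f(\ox))(w)$. By definition \eqref{rcod} and \eqref{FrechetSubdifferential}, for every $\eta>0$ there exists $\delta>0$ such that
\begin{equation*}
\langle z_1,x-\ox\rangle-\langle w,f(x)-f(\ox)\rangle\le\eta\,\bigl\|(x-\ox,f(x)-f(\ox))\bigr\|
\end{equation*}
whenever $\|x-\ox\|+\|f(x)-f(\ox)\|<\delta$, and
\begin{equation*}
\langle z_2,x'-\ox\rangle-\langle w,y'-(\ov-f(\ox))\rangle\le\eta\,\bigl\|(x'-\ox,y'-\ov+f(\ox))\bigr\|
\end{equation*}
whenever $(x',y')\in\gph F$ with $\|x'-\ox\|+\|y'-\ov+f(\ox)\|<\delta$. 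Now take any $(x,y)\in\gph(f+F)$ near $(\ox,\ov)$ and set $y':=y-f(x)\in F(x)$. The calmness bound \eqref{calmF} gives $\|f(x)-f(\ox)\|\le\ell\|x-\ox\|$, so by shrinking the neighborhood I can ensure that both displayed inequalities apply simultaneously to the pair $(x,f(x))$ and to $(x,y-f(x))$. Adding them and using $\|y'-\ov+f(\ox)\|=\|y-\ov-(f(x)-f(\ox))\|\le\|y-\ov\|+\ell\|x-\ox\|$, together with $\|f(x)-f(\ox)\|\le\ell\|x-\ox\|$, I obtain a bound of the form
\begin{equation*}
\langle z_1+z_2,x-\ox\rangle-\langle w,y-\ov\rangle\le\eta\,C\,\bigl\|(x-\ox,y-\ov)\bigr\|
\end{equation*}
for a constant $C=C(\ell)$ independent of $\eta$. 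Since $\eta$ was arbitrary, this yields $(z_1+z_2,-w)\in\widehat N_{\gph(f+F)}(\ox,\ov)$, i.e., $z_1+z_2\in\widehat D^*(f+F)(\ox,\ov)(w)$, proving \eqref{reg-cod-sum}.

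For the equality under Fréchet differentiability of $f$ at $\ox$, a standard computation (using the smooth single-valued structure of $f$) yields $\widehat D^*f(\ox)(w)=\{\nabla f(\ox)^*w\}$ and $\widehat D^*(-f)(\ox)(w)=\{-\nabla f(\ox)^*w\}$, and $-f$ is obviously calm at $\ox$. Applying the already-proved inclusion \eqref{reg-cod-sum} to the sum $(-f)+(f+F)=F$ at the point $(\ox,\ov-f(\ox))$ gives
\begin{equation*}
-\nabla f(\ox)^*w+\widehat D^*(f+F)(\ox,\ov)(w)\subset\widehat D^*F\bigl(\ox,\ov-f(\ox)\bigr)(w),
\end{equation*}
which, after adding $\nabla f(\ox)^*w=\widehat D^*f(\ox)(w)$ to both sides, is precisely the reverse inclusion of \eqref{reg-cod-sum}.

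The main technical obstacle I expect is the bookkeeping in the first step: one must make the two $\eta$-estimates valid on a common neighborhood of $(\ox,\ov)$ in $\gph(f+F)$, and this requires using calmness both to guarantee that $(x,f(x))$ stays near $(\ox,f(\ox))$ and that the shifted point $(x,y-f(x))$ stays near $(\ox,\ov-f(\ox))$, while simultaneously controlling the denominators $\|(x,f(x))-(\ox,f(\ox))\|$ and $\|(x,y-f(x))-(\ox,\ov-f(\ox))\|$ by a uniform multiple of $\|(x,y)-(\ox,\ov)\|$. Without calmness the second norm could blow up relative to the reference distance, so calmness is exactly what is needed to keep the $\liminf$-style estimates coherent.
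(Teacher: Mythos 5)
Your proof of the inclusion \eqref{reg-cod-sum} is correct and is essentially the paper's argument in a more direct form: the paper packages the two $\varepsilon$-estimates through the product sets $\Omega_1=\{(x,y_1,y_2):(x,y_1)\in\gph f\}$ and $\Omega_2=\{(x,y_1,y_2):(x,y_2)\in\gph F\}$ and the elementary inclusion $\widehat N_{\Omega_1}(\oz)+\widehat N_{\Omega_2}(\oz)\subset\widehat N_{\Omega_1\cap\Omega_2}(\oz)$, while you simply add the two defining inequalities at the points $(x,f(x))\in\gph f$ and $(x,y-f(x))\in\gph F$; in both versions calmness plays exactly the role you identify, namely controlling $\|f(x)-f(\ox)\|$ and $\|y-f(x)-\ov+f(\ox)\|$ by a uniform multiple of $\|(x,y)-(\ox,\ov)\|$ so that the estimate transfers to $\gph(f+F)$. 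Where you genuinely diverge is the equality case: the paper simply cites the known coderivative sum rule for a Fr\'echet differentiable single-valued part (\cite[Theorem~1.62(i)]{Mordukhovich06}), whereas you bootstrap the already-proved inclusion applied to the decomposition $F=(-f)+(f+F)$ at $(\ox,\ov-f(\ox))$, using $\widehat D^*(\pm f)(\ox)(w)=\{\pm\nabla f(\ox)^*w\}$ to cancel the gradient term and obtain the reverse inclusion. This bootstrap is valid (note that $\ov'-(-f)(\ox)=\ov$, so the middle term is indeed $\widehat D^*(f+F)(\ox,\ov)(w)$) and makes the equality part self-contained at the cost of relying on the standard fact that the regular coderivative of a Fr\'echet differentiable map is the singleton $\{\nabla f(\ox)^*w\}$, which is the same ingredient hidden in the cited theorem.
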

\begin{proof} Observe first that we always have the inclusion 
\begin{equation}\label{frechetnormalinc}
\widehat{N}_{\Omega_1}(\oz) +\widehat{N}_{\Omega_2}(\oz) \subset \widehat{N}_{\Omega_1 \cap \Omega_2}(\oz)
\end{equation}
for any sets $\Omega_1, \Omega_2 \subset \R^s$ with $\oz \in \Omega_1\cap \Omega_2$. To verify \eqref{reg-cod-sum}, consider the sets
$$
\Omega_1:= \big\{(x,y_1,y_2)\in\R^n\times \R^m\times \R^m\;\big|\; (x,y_1) \in \gph f\big\},
$$
$$
\Omega_2:=\big\{(x,y_1,y_2)\in\R^n\times \R^m\times \R^m\;\big|\; (x,y_2) \in \gph F\big\}.
$$
Fixing $w\in \R^n$ and picking any $z \in \widehat{D}^*f (\ox)(w) + \widehat{D}^* F(\ox, \oy_2)(w)$ with $\oy_1:=f(\ox)$ and $\oy_2:= \ov - f(\ox)$, we find $z_1 \in \widehat{D}^*f(\ox)(w)$ and $z_2 \in \widehat{D}^*F(\ox,\oy_2)(w)$ such that $z=z_1+z_2$. This readily implies that
$$
(z_1, -w)\in \widehat{N}_{\text{\rm gph}f}\left( \ox, \oy_1 \right) \; \text{ and }\; (z_2, -w)\in \widehat{N}_{\text{\rm gph}F}\left(\ox, \oy_2 \right).
$$
Employing \cite[Proposition 1.2]{Mordukhovich06} gives us the inclusions
$$
(z_1,-w,0)\in \widehat{N}_{\Omega_1}(\ox,\oy_1, \oy_2) \; \text{and }\; (z_2,0,-w)\in \widehat{N}_{\Omega_2}(\ox,\oy_1, \oy_2),
$$
which ensure by \eqref{frechetnormalinc} that
\begin{equation}\label{normalo1o2}
(z_1+z_2, -w, -w) \in \widehat{N}_{\Omega_1\cap \Omega_2}(\ox,\oy_1, \oy_2).   
\end{equation}
To verify \eqref{reg-cod-sum}, we need to show that 
\begin{equation}\label{norofsum}
(z_1+z_2, -w) \in \widehat{N}_{\text{\rm gph} (f+F)}(\ox,\ov). 
\end{equation}
To proceed, pick any $\epsilon>0$ and deduce from \eqref{normalo1o2} that there exists $\delta>0$ such that $B_\delta(\ox) \subset U$, and that for all $(x,y_1,y_2)\in (\Omega_1\cap\Omega_2)\cap B_\delta((\ox,\oy_1, \oy_2))$ we have 
$$ 
\langle z_1 + z_2, x - \ox\rangle -\langle w, y_1 + y_2 -\ov    \rangle   \leq   \epsilon\big(\|x-\ox\|+\|y_1 - f(\ox)\| + \|y_2 - \ov + f(\ox)\|\big).
$$
Combining this with the calmness assumption in \eqref{calmF} tells us that
\begin{equation}\label{inner1}
\langle z_1 + z_2, x - \ox\rangle -\langle w, y_1 + y_2 -\ov    \rangle \leq M\epsilon(\|x -\ox\| + \|y_1+y_2 -\ov\|)
\end{equation}
for all $(x,y_1,y_2)\in (\Omega_1\cap\Omega_2)\cap B_\delta((\ox,\oy_1, \oy_2))$, where $M:=1+ 2\ell$. Choosing now
$$
\delta^\prime = \frac{\delta}{\sqrt{3(1+\ell^2)}},
$$
we readily arrive at the implication
\begin{equation}\label{grpimplies}
(x,y) \in \gph (f+F) \cap\big(B_{\delta^\prime}(\ox) \times B_{\delta^\prime}(\ov)\big) \Longrightarrow\big(x,f(x), y-f(x)\big) \in (\Omega_1\cap\Omega_2)\cap B_\delta((\ox,\oy_1, \oy_2)).
\end{equation}
Employing \eqref{inner1} together with \eqref{grpimplies} gives us the limiting condition 
$$
\underset{(x,y) \overset{ \text{\rm gph} (f+F) }{\longrightarrow} (\bar{x},\ov)} {\limsup} \frac{\langle z_1 + z_2, x - \ox\rangle -\langle w, y -\ov    \rangle }{\|x-\ox\|+\|y-\ov\|} \leq 0, 
$$
which yields \eqref{norofsum} by the definition of regular normals in \eqref{FrechetSubdifferential} and \eqref{NormalCones}. This means that $z \in \widehat{D}^*(f+F)(\ox,\ov)(w)$, and thus inclusion \eqref{reg-cod-sum} is justified. The equality in \eqref{reg-cod-sum} under the Fr\'echet differentiability of $f$ at $\ox$ follows from \cite[Theorem~1.62(i)]{Mordukhovich06}. 
\end{proof}

Now we are ready to derive the aforementioned {\em second-order necessary condition} for {\em strong} local minimizers of prox-regular functions  via the combined second-order subdifferential. 

\begin{Theorem}\label{NecessII} Let $\ox$ be a strong local minimizer with some modulus $\sigma>0$ of an extended-real-valued function $\varphi:\R^n\to\overline{\R}$, which is assumed to be prox-regular at $\ox$ for $0\in\partial\ph(\ox)$. Then the second-order subdifferential necessary optimality condition 
\begin{equation}\label{PSDnecessaryII}
\langle z,w \rangle \geq \sigma\|w\|^2\;\text{ for all }\;z \in \breve{\partial}^2\varphi(\bar{x},0)(w)\;\mbox{ and }\;w \in \R^n
\end{equation} 	  
is satisfied. This implies that we have, without a particular modulus specified, the following pointbased second-order necessary optimality condition $($PSONC$)$ for strong local minimizers:
\begin{equation}\label{PSDnecessaryIII}
\langle z,w\rangle >0 \;\text{ for all }\;z \in \breve{\partial}^2\varphi(\bar{x},0)(w)\;\mbox{ and }\;w\in\R^n\setminus\{0\}.
\end{equation}
\end{Theorem}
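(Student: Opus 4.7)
The plan is to reduce this to the local-minimizer setting already treated in Theorem~\ref{NecessI} by exploiting the quadratic shift. By Lemma~\ref{lem:localminishift}, $\ox$ being a strong local minimizer of $\varphi$ with modulus $\sigma$ is equivalent to $\ox$ being a local minimizer of the shifted function $\psi(x)=\varphi(x)-\frac{\sigma}{2}\|x-\ox\|^2$ from \eqref{shift}. Since the quadratic term is $\mathcal{C}^\infty$, the standard calculus for limiting subdifferentials under smooth perturbations gives $\partial\psi(x)=\partial\varphi(x)-\sigma(x-\ox)$, so in particular $0\in\partial\psi(\ox)$. Moreover, prox-regularity of $\varphi$ at $\ox$ for $0$ is preserved under addition of a $\mathcal{C}^2$ function, so $\psi$ is prox-regular at $\ox$ for $0\in\partial\psi(\ox)$. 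Thus Theorem~\ref{NecessI} applies to $\psi$ and yields
\begin{equation*}
\langle z',w\rangle\ge 0\;\text{ whenever }\;z'\in\breve{\partial}^2\psi(\ox,0)(w),\;w\in\R^n.
\end{equation*}

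The next step is to relate $\breve{\partial}^2\psi(\ox,0)$ to $\breve{\partial}^2\varphi(\ox,0)$. Write the subgradient mapping as the sum $\partial\psi=f+\partial\varphi$, where $f(x):=-\sigma(x-\ox)$ is linear (hence Fr\'echet differentiable at $\ox$ with $f(\ox)=0$). Applying the regular coderivative sum rule of Lemma~\ref{sumrulecombine}, which holds as equality under Fr\'echet differentiability of $f$, gives
\begin{equation*}
\breve{\partial}^2\psi(\ox,0)(w)=\widehat{D}^*f(\ox)(w)+\widehat{D}^*(\partial\varphi)\bigl(\ox,0-f(\ox)\bigr)(w)=-\sigma w+\breve{\partial}^2\varphi(\ox,0)(w),
\end{equation*}
since $\widehat{D}^*f(\ox)(w)=\nabla f(\ox)^{*}w=-\sigma w$ and $f(\ox)=0$. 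Consequently, every $z\in\breve{\partial}^2\varphi(\ox,0)(w)$ yields an element $z'=-\sigma w+z\in\breve{\partial}^2\psi(\ox,0)(w)$, for which the inequality above reads $\langle -\sigma w+z,w\rangle\ge 0$, i.e., $\langle z,w\rangle\ge\sigma\|w\|^2$, establishing \eqref{PSDnecessaryII}.

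For the PSONC \eqref{PSDnecessaryIII}, it suffices to note that for any $w\in\R^n\setminus\{0\}$ the right-hand side $\sigma\|w\|^2$ is strictly positive, so \eqref{PSDnecessaryII} forces the strict inequality $\langle z,w\rangle>0$ for every $z\in\breve{\partial}^2\varphi(\ox,0)(w)$. I do not anticipate a serious obstacle: the two ingredients, the local/strong-local correspondence of Lemma~\ref{lem:localminishift} and the regular coderivative sum rule of Lemma~\ref{sumrulecombine}, fit together cleanly, and the only delicate point is ensuring that prox-regularity of $\psi$ at $\ox$ for $0$ is preserved, which follows from the definition since adding a smooth quadratic only alters the modulus $r$ in \eqref{prox} by $\sigma$.
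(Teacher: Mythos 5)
Your proposal is correct and follows essentially the same route as the paper: reduce to Theorem~\ref{NecessI} via the quadratic shift of Lemma~\ref{lem:localminishift}, then translate the second-order subdifferential of $\psi$ back to that of $\varphi$ using the regular coderivative sum rule of Lemma~\ref{sumrulecombine}, which yields $\breve{\partial}^2\psi(\ox,0)(w)=\breve{\partial}^2\varphi(\ox,0)(w)-\sigma w$. Your explicit check that $\psi$ inherits prox-regularity at $\ox$ for $0$ (needed to invoke Theorem~\ref{NecessI}) is a detail the paper leaves implicit, and it is handled correctly.
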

\begin{proof} Considering the quadratically shifted function $\psi$ from \eqref{shift}, we get by Lemma~\ref{lem:localminishift} that $\ox$ is  a local minimizer of $\psi$. Utilizing Theorem~\ref{NecessI} tells us that
\begin{equation}\label{2ndPSDforpsinec}
\langle z,w \rangle \geq 0\;\text{ whenever }\;z \in \breve{\partial}^2\psi(\bar{x},0)(w)\;\mbox{ and }\;w \in \R^n.
\end{equation} 	
It follows from the sum rule for regular coderivatives in Lemma~\ref{sumrulecombine} that 
\begin{equation}\label{sumruleshift}
\breve{\partial}^2\psi(\bar{x},0)(w) = \breve{\partial}^2\varphi(\bar{x},0)(w) -\sigma w\;\text{ for all }\;w\in \R^n.   
\end{equation}
Combining \eqref{2ndPSDforpsinec} and \eqref{sumruleshift}, we obtain \eqref{PSDnecessaryII} and thus complete the proof of the theorem. 
\end{proof}\vspace*{-0.1in}

\begin{Remark}[\bf comparison with known second-order necessary optimality conditions] \rm As advised by one of the referees, the usage of Moreau envelopes to obtain second-order optimality conditions was initiated by Eberhard and Wenczel \cite{eberhard}. In fact, our Theorems~\ref{NecessI} and \ref{NecessII} can be obtained via \cite[Theorem 67]{eberhard} since the proximal subdifferential and limiting subdifferential of $\varphi$ are the same under the prox-regularity of the function at the reference point, and since we may assume without loss of generality the prox-boundedness of a prox-regular function. 

However, our approach and proof, depending heavily on the relationships between local minimizers to a function and its corresponding Moreau envelope in Theorem~\ref{localMoreau}, explores deeply the celebrated notion of {\em prox-regularity} of extended real-valued functions together with its fruitful properties and interrelationships with the associated Moreau-Yosida regularization developed in \cite{Poliquin}. Further, our proof necessitates employing only the properties and calculus rules for regular coderivatives, while the approach in \cite{eberhard} requires using several auxiliary results involving other generalized second-order derivatives, such as the second-order lower Dini-directional derivative (or second-order subderivative) and the graphical derivative to the proximal subdifferential. Remarkably, it is observed that the proof in \cite{eberhard} cannot be easily simplified under the prox-regularity of the function in consideration. Below the main ideas in two approaches are provided:
\begin{itemize}
\item The assumed prox-regularity of $\varphi$ at $\ox$ for $0\in \partial\varphi (\ox)$ in Theorem \ref{NecessII} allows us, via Proposition~\ref{C11}, to exploit both conditions:
\begin{numcases}{}
\text{the local }\mathcal{C}^{1,1} \text{ smoothness of } e_{\lambda}\varphi \text{ around }\ox, \text{ and} \notag\\
\text{the fulfillment of } \nabla e_{\lambda}\varphi=\big(\lambda I + T^{-1}\big)^{-1} \text{ around }\ox \label{us-2},
\end{numcases}
where the second-order necessary optimality condition for functions of class $\mathcal{C}^{1,1}$, built in \cite[Theorem~3.3]{ChieuLeeYen17}, is applied later to the Moreau envelope $e_{\lambda}\varphi$ for obtaining the positive-definiteness of $(\widehat{D}^*\nabla e_{\lambda}\varphi) (\ox)$. At last, $(\widehat{D}^*\partial\varphi)(\ox,0)$ is positive-definite by the connection in \eqref{us-2}. 

\item On the other hand, the device in \cite[Theorem~67]{eberhard} employs,  under merely the prox-boundedness of $\varphi$, the automatic paraconcavity (i.e., weak concavity) of $e_{\lambda}\varphi$ to justify the following facts involving a strong local minimizer $\ox$:
\begin{numcases}{}
\text{The third-kind necessary condition from \cite[Definition~56]{eberhard} holds for $e_{\lambda}\varphi$ at }\ox,\text{ together with} \notag \\
\widehat{N}_{\text{gph} \partial_p \varphi}(\ox,0)\subset L_{\lambda}^T \Big(\widehat{N}_{\text{gph} \partial_p e_{\lambda}\varphi}(\ox,0)\Big), \text{ where }L_\lambda (x,y):=(x+\lambda y,y). \notag
\end{numcases}
Combining the obtained assertions, the authors  of \cite{eberhard} finally get the positive-semidefiniteness of tghe multifunction $(\widehat{D}^*\partial_p \varphi)(\ox,0)$.
\end{itemize}
\end{Remark}

\color{black}
As demonstrated in \cite{dl} by an illustrative example, a significant limitation of strong local minimizers lies in their high sensitivity with respect to minor perturbations. Addressing this issue, the authors of \cite{dl} introduce a more robust notion labeled {\em stable strong local minimizers}. Remarkably, it is proved in \cite{dl} that the subclass of strong local minimizers agrees with tilt-stable ones mentioned in Section~\ref{intro}.

Recall that $\ox\in\dom\varphi$ is a {\em tilt-stable local minimizer} of a function $\varphi\colon\R^n\to\oR$ if there exists a number $\gamma>0$ such that the mapping
\begin{equation*}
M_\gamma\colon v\mapsto{\rm argmin}\big\{\varphi(x)-\langle v,x\rangle\;\big|\;x\in\B_\gamma(\ox)\big\}
\end{equation*}
is single-valued and Lipschitz continuous on some neighborhood of $0\in\R^n$ with $M_\gamma(0)=\{\ox\}$. By a {\em modulus} of tilt stability of $\ph$ at $\ox$ we understand a Lipschitz constant of $M_\gamma$ around the origin. It has been well recognized starting with \cite{r19} that tilt-stable local minimizers of continuously prox-regular functions are closely related to strong variational stability of such functions with the moduli interplay. The following proposition is taken from \cite[Proposition~2.9]{kmp22convex}.

\begin{Proposition}\label{equitiltstr} Let $\varphi:\R^n\to\overline{\R}$ be continuously prox-regular at $\bar{x}\in\dom\varphi$ for $0\in\partial\varphi(\bar{x})$. Then we have the equivalent assertions:
		
{\bf(i)} $\varphi$ is strongly variationally convex at $\ox$ for $\ov=0$ with modulus $\sigma>0$. 
		
{\bf (ii)} $\ox$ is a tilt-stable local minimizer of $\varphi$ with modulus $\sigma^{-1}$. 
\end{Proposition}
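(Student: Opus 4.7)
The plan is to treat the equivalence as a bridge between two views of the same local object: strong variational convexity tells us that, in a certain attentive localization, $\varphi$ behaves like an l.s.c.\ strongly convex function $\psi$ with modulus $\sigma$, while tilt stability says that the localized argmin map of $\varphi$ under linear perturbations is single-valued and Lipschitz with constant $\sigma^{-1}$. The natural intermediary in both directions is the Moreau envelope, which by continuous prox-regularity (Proposition~\ref{C11}) is $\mathcal{C}^{1,1}$ near $\ox+\lambda\cdot 0=\ox$ for small $\lambda>0$, with $\nabla e_\lambda\varphi=\lambda^{-1}(I-\mathrm{Prox}_{\lambda\varphi})$ and $\mathrm{Prox}_{\lambda\varphi}(\ox)=\{\ox\}$.

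For (i)$\Rightarrow$(ii) I would start from the neighborhood $U\times V$ and the strongly convex l.s.c.\ $\psi\le\varphi$ on $U$ supplied by Definition~\ref{vr}. Consider the tilted problem of minimizing $\psi(x)-\langle v,x\rangle$ over $U$ for $v$ close to $0$: strong convexity with modulus $\sigma$ gives a unique minimizer $x(v)$ with $v\in\partial\psi(x(v))$, and the standard estimate for strongly convex subdifferentials yields $\|x(v_1)-x(v_2)\|\le\sigma^{-1}\|v_1-v_2\|$. The graph-overlap condition \eqref{varconvex} identifies such $(x(v),v)$ with points of $\gph\partial\varphi$, so $x(v)$ is a (limiting) stationary point of the tilted $\varphi$ belonging to a small ball $\B_\gamma(\ox)$. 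Continuous prox-regularity upgrades this to a local minimizer (the localized $\varphi$ agrees with $\psi$ at $x(v)$ via subdifferential continuity), and a routine quadratic lower-bound comparison against the boundary of $\B_\gamma(\ox)$ rules out competitors, so $M_\gamma(v)=\{x(v)\}$ and tilt stability holds with modulus $\sigma^{-1}$.

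For (ii)$\Rightarrow$(i) I would reduce to a statement about the Moreau envelope. Using the identities $\mathrm{Prox}_{\lambda\varphi}=(I+\lambda\partial\varphi)^{-1}$ (after attentive localization) and $\nabla e_\lambda\varphi=\lambda^{-1}(I-\mathrm{Prox}_{\lambda\varphi})$, the tilt map $M_\gamma$ of $\varphi$ can be written, for small tilts, in terms of $\mathrm{Prox}_{\lambda\varphi}$ evaluated at a small perturbation of $\ox$. Lipschitz continuity of $M_\gamma$ with constant $\sigma^{-1}$ then transfers to a Lipschitz/cocoercivity-type bound on $\mathrm{Prox}_{\lambda\varphi}$; applying Proposition~\ref{prop:coco} in the direction (i)$\Rightarrow$(iii) gives a coderivative estimate that, combined with the coderivative sum rule used in Proposition~\ref{coro:cocoprox}, establishes strong monotonicity of $\partial e_\lambda\varphi$ on a neighborhood of $\ox$ with an appropriate constant depending on $\sigma$ and $\lambda$. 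This yields local strong convexity of $e_\lambda\varphi$ near $\ox$, and then \cite[Theorem~6.6]{KKMP23-2} (together with Theorem~\ref{theo:codforvc} for the convex/strongly convex refinement) promotes local strong convexity of $e_\lambda\varphi$ back to strong variational convexity of $\varphi$ with modulus $\sigma$, exactly as required.

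The hard part will be the bookkeeping of moduli, in particular showing that the reciprocal relation $\sigma\leftrightarrow\sigma^{-1}$ is preserved exactly rather than only up to a constant; the factor $\lambda$ introduced by passing through $e_\lambda\varphi$ has to cancel cleanly, and this is where continuous prox-regularity is essential, because it legitimizes the identifications $\partial e_\lambda\varphi=(\lambda I+(\partial\varphi)^{-1})^{-1}$ on the attentive localization and lets us replace $T$ by $\partial\varphi$ throughout. A secondary but delicate issue is to ensure that the ball $\B_\gamma(\ox)$ in the definition of tilt stability can be reconciled with the neighborhood $U$ in the definition of strong variational convexity; this amounts to showing that for sufficiently small $\gamma$ and small tilts no minimizer can escape, which follows from the uniform quadratic lower bound $\psi(x)\ge\psi(\ox)+\tfrac{\sigma}{2}\|x-\ox\|^2$ on one side and the Lipschitz property of $M_\gamma$ on the other.
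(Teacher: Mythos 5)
First, a point of reference: the paper does not prove this proposition at all --- it is imported verbatim from \cite[Proposition~2.9]{kmp22convex} --- so there is no in-paper argument to compare with, and your proposal has to stand on its own. Your direction (i)$\Rightarrow$(ii) does: the strongly convex minorant $\psi$ from Definition~\ref{vr} gives a unique, $\sigma^{-1}$-Lipschitz map $v\mapsto x(v)$ with $v\in\partial\psi(x(v))$, the graph identification in \eqref{varconvex} together with $\varphi\ge\psi$ and $\varphi(x(v))=\psi(x(v))$ yields $\varphi(x)-\la v,x\ra\ge\varphi(x(v))-\la v,x(v)\ra+\frac{\sigma}{2}\|x-x(v)\|^2$ on a small ball around $\ox$, and hence $M_\gamma(v)=\{x(v)\}$ with the stated Lipschitz constant. (Continuous prox-regularity plays essentially no role in this half.)

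The reverse direction is where there is a genuine gap. The pivotal sentence --- ``Lipschitz continuity of $M_\gamma$ with constant $\sigma^{-1}$ then transfers to a Lipschitz/cocoercivity-type bound on ${\rm Prox}_{\lambda\varphi}$'' --- is precisely the hard content of the theorem and does not follow from the manipulations you describe. Writing $T$ for the attentive localization, tilt stability identifies $M_\gamma$ with $T^{-1}$ locally, and the argmin structure gives monotonicity of $M_\gamma$; but a monotone $\sigma^{-1}$-Lipschitz map need not have a $\sigma$-strongly monotone inverse (consider $v\mapsto\sigma^{-1}Av$ with $A$ a rotation by an angle close to $\pi/2$: monotone, $\sigma^{-1}$-Lipschitz, yet $\la A^{-1}x,x\ra=\cos\theta\,\|x\|^2$), and strong monotonicity of $T$ with modulus exactly $\sigma$ is exactly what cocoercivity of ${\rm Prox}_{\lambda\varphi}=(I+\lambda T)^{-1}$ with constant $1+\lambda\sigma$ amounts to. What is actually required here is the uniform second-order growth characterization of tilt-stable minimizers (\cite{dl}, \cite{MorduNghia}): $\varphi(x)-\la v,x\ra\ge\varphi(M_\gamma(v))-\la v,M_\gamma(v)\ra+\frac{\sigma}{2}\|x-M_\gamma(v)\|^2$ uniformly in small $v$, from which adding two such inequalities gives $\la v_1-v_2,x_1-x_2\ra\ge\sigma\|x_1-x_2\|^2$ on the localization; your outline never produces this inequality, and without it the whole chain through Proposition~\ref{prop:coco} and Proposition~\ref{coro:cocoprox} has nothing to start from. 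Two further problems you partly flag but do not resolve: Theorem~\ref{theo:codforvc} and \cite[Theorem~6.6]{KKMP23-2} as used in this paper cover only plain variational convexity, so the ``strongly convex refinement'' you invoke at the end is not available from the paper's toolbox (one must use \cite[Theorem~5.4]{kmp22convex} or rework the argument with moduli); and passing through $e_\lambda\varphi$ degrades the modulus to $\sigma/(1+\lambda\sigma)$, so recovering exactly $\sigma$ rather than every $\sigma'<\sigma$ needs an additional limiting argument in $\lambda$.
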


Next we discuss some {\em second-order sufficient conditions} for strong local minimizers of continuously prox-regular functions that are complement the necessary optimality conditions from Theorem~\ref{NecessII}.

\begin{Remark}\label{SuffI} \rm Let $\varphi:\R^n\to \overline{\R}$ be continuously prox-regular at $\bar{x}$ for $0 \in \partial\varphi(\bar{x})$. One of the second-order subdifferential conditions to guarantee the {\em strong} local optimality of the stationary point $\ox$ of $\varphi$ is the {\em pointbased second-order sufficient optimality condition} (PSOSC) 
\begin{equation}\label{2ndPDtilt}
\langle z, w \rangle > 0\;\text{ whenever }\;z \in {\partial}^2\varphi(\bar{x},0)(w),\;w \in \R^n\setminus \{0\},
\end{equation}
which is different from \eqref{PSDnecessaryIII} only by the type of the second-order subdifferential. Indeed, it is shown in \cite[Theorem~1.3]{Poli} that PSOSC \eqref{2ndPDtilt} is {\em equivalent} to the {\em tilt-stability} of $\bar{x}$, which clearly implies that $\ox$ is a strong local minimizer of $\varphi$. Another second-order subdifferential condition ensuring the local optimality of $\ox$ for $\ph$ is the {\em neighborhood second-order sufficient optimality condition} (NSOSC)
\begin{equation}\label{2ndPSDaround}
\langle z, w\rangle \geq \sigma \|w\|^2\;\text{ whenever }\; z \in \breve{\partial}^2\varphi(x,v)(w), \; (x,v)\in\gph\partial\varphi\cap (U\times V), \; w \in \R^n
\end{equation}
valid for some $\sigma >0$ and neighborhoods $U$ of $\ox$ and $V$ of $\ov=0$ as a {\em no-gap neighborhood} counterpart of the second-order necessary optimality condition in \eqref{PSDnecessary}. Indeed, it is shown in \cite[Theorem~5.4]{kmp22convex} that NSOSC \eqref{2ndPSDaround} is a {\em characterization} of the {\em strong variational convexity} of a continuously prox-regular function $\varphi$ at $\ox$ for $0\in\partial\ph(\ox)$, which implies that $\ox$ is a strong (even tilt-stable) local minimizer of $\ph$ while not vice versa; see \cite[Remarks~2.6(ii)]{kmp22convex}. 
\end{Remark}\vspace*{-0.1in}

Now we draw the reader's attention to Figure~\ref{fig:my_label} illustrating the relationships between the obtained second-order subdifferential optimality conditions for strong local minimizers and the associated variational notions. In this figure, the arrows $(\longrightarrow)$, which are based on Theorem~\ref{NecessII}, Proposition~\ref{equitiltstr}, and Remark~\ref{SuffI}, read ``implies''. On the other hand, the arrows $(\dashrightarrow)$ read ``may not imply''. In addition to the previous discussions on ($\dashrightarrow$), observe that we do not have the equivalence between strong local minimizers and the pointbased second-order necessary optimality condition (PSONC) \eqref{PSDnecessaryIII}.  Indeed, it is shown in \cite[Example 4.10]{ChieuLeeYen17} that there exists a function of class $\mathcal{C}^{1,1}$ (hence continuously prox-regular) for which  PSONC \eqref{PSDnecessaryIII} holds but $\ox$ is not even a local minimizer of $\varphi$.
\vspace*{-0.3in}
\begin{center} 
\begin{figure}
\centering
\begin{tikzpicture}[x=0.75pt,y=0.75pt,yscale=-1,xscale=1, line width=0.75pt]
\draw  [->] (153,254) -- (227,254) ;
\draw  [  ->]  (227,247) -- (153,247) ;
\draw  [->]  (285,254) -- (359,254) ;
\draw   [dashed, thick, ->] (359,247) -- (285,247) ;
\draw  [->]  (139,238) -- (177,172) ;
\draw  [->]  (167,172.1) -- (130.05,236.75);
\draw [->] (337,174) -- (375,240);
\draw  [dashed, ->] (384,237) -- (345,170);
\draw  [ - >]   (219,81) -- (181,147) ;
\draw  [->]  (191,147) -- (229,81) ;
\draw [->]   (285,81) -- (323,147);
\draw  [-> , dashed]  (334,147) -- (296,81) ;
\draw  [->]   (252,235) -- (252,81)  ;
\draw  [->]  (260,81) --  (260,235) ;
\draw (100,244) node [anchor=north west][inner sep=0.75pt]    {PSOSC};
\draw (232,244) node [anchor=north west][inner sep=0.75pt]    {NSOSC};
\draw (365,244) node [anchor=north west][inner sep=0.75pt]    {PSONC};
\draw (128,154) node [anchor=north west][inner sep=0.75pt]    {Tilt Stability};
\draw (298,154) node [anchor=north west][inner sep=0.75pt]    {Strong Local Minimizer};
\draw (189,60) node [anchor=north west][inner sep=0.75pt]    {Strong Variational Convexity};
\end{tikzpicture}
\caption{Relationships between second-order optimality conditions, tilt stability, strong variational convexity, and strong local minimizer for continuously prox-regular functions}
\label{fig:my_label}
\end{figure}
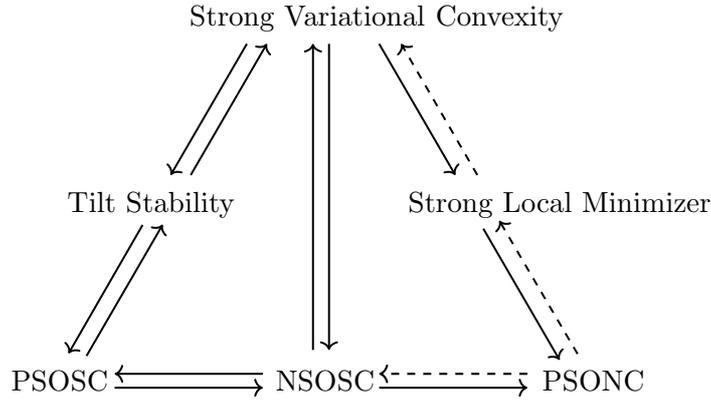
\end{center}
\vspace*{-0.2in}

\section{Second-Order Necessary Conditions in Constrained Optimization}\label{sec:optiforconstrained}\vspace*{-0.05in}

In this section, we aim at establishing second-order necessary  optimality conditions for the class of {\em constrained optimization problems} in the functional-geometric form given by:
\begin{equation}\label{composite}
\min\;f(x)\;\text{ subject to }\;g(x)\in\Theta,
\end{equation}
where $f:\R^n\to\R$ and $g\colon\R^n\to\R^m$ are ${\cal C}^1$-smooth at the reference points, and where $\Theta \subset \R^m$ is a nonempty, closed, and convex set. These are our {\em standing assumptions} in this section unless otherwise stated. Problem \eqref{composite} encompasses various particular classes of problems in constrained optimization such as conic programs, nonlinear programs, etc.

Denoting the set of feasible solutions to \eqref{composite} by 
\begin{equation}\label{Gammas}
\Gamma:=\big\{x\in\R^n\;\big|\;g(x) \in \Theta\big\}, 
\end{equation}
we can rewrite \eqref{composite} in the unconstrained extended-real-valued format 
\begin{equation}\label{composiuncon}
\min \quad \varphi(x):= f(x) + \delta_{\Gamma}(x)\;\text{ subject to }\; x \in \R^n 
\end{equation}
for which the second-order optimality conditions are established in Sections~\ref{sec:optiforprox}--\ref{sec:strong} via the second-order subdifferentials of the cost function $\ph\colon\R^n\to\oR$. Our goal is to obtain second-order optimality conditions in terms of the initial data of problem \eqref{composite}. This section addresses necessary optimality conditions, while the corresponding sufficient conditions for \eqref{composite} are derived in Section~\ref{sec:2ndsuf}.

It follows from \cite[Proposition~1.30] {Mor18} that the {\rm first-order necessary condition} for the local optimality of $\ox$ in the unconstrained optimization problem \eqref{composiuncon} is 
\begin{equation}\label{Fermatrulecompo}
0 \in \partial\varphi(\ox) = \nabla f(\ox) + \partial\delta_\Gamma(\ox)= \nabla f(\ox) + N_\Gamma(\ox),
\end{equation}
which can be equivalently expressed in the form
\begin{equation}\label{1stcond}
\nabla f(\ox) + \nabla g(\ox)^*\oy = 0 \; \text{ for some }\;\oy \in N_{\Theta}(g(\ox))
\end{equation}
under appropriate constraint qualification conditions. One of the most well-known constraint qualification conditions, which is called the {\rm first-order constrained qualification condition} (FOCQ), to guarantee the equivalence between \eqref{Fermatrulecompo} and \eqref{1stcond} is 
\begin{equation}\label{1stqualifyconcompos}
N_\Theta (g(\ox)) \cap \ker \nabla g(\ox)^* = \{0\}. 
\end{equation}
For particular classes of optimization problems, the general FOCQ \eqref{1stqualifyconcompos}
reduces to the classical {\em Mangasarian-Fromovitz constraint qualification} (MFCQ) in nonlinear programming and to the {\em Robinson constraint qualification} in conic programming. Moreover, it follows from the Mordukhovich criterion in \eqref{cor-cr} (see \cite[Theorem~3.6]{Mordu93} and \cite[Theorem~9.40]{Rockafellar98}) that condition \eqref{1stqualifyconcompos} is equivalent to the \textit{metric regularity} of the   mapping $(x,\alpha)\mapsto (g(x),\alpha) -\Theta \times \R_{+}$ around the point $((\ox,0),(0,0))$. We are also going to use the qualification condition labeled as the {\em second-order qualification condition} (SOCQ) from \cite{mor-roc} for problem \eqref{NLPproblem} at $\ox \in \Gamma$, which is formulated as follows:
\begin{equation}\label{SOCQ}
D^*N_\Theta(g(\ox), \oy)(0) \cap \ker \nabla g(\ox)^* =\{0\},  
\end{equation}
where $\oy \in \R^m$ satisfies \eqref{1stcond}. It follows from \cite[Lemma~7.2]{kmp22convex} that SOCQ \eqref{SOCQ} reduces to the classical {\em linear independence constraint
qualification} (LICQ) in nonlinear programming. Next we recall the weaker qualification condition, which is labeled as the {\em metric subregularity constraint qualification condition} (MSCQ) for \eqref{composite}, introduced in \cite{gm15}. Namely, we say that MSCQ holds at $\ox \in \Gamma$ if the mapping $x \mapsto g(x) - \Theta$ is {\em metrically subregular} at $(\ox,0)$, i.e., 
there exist a neighborhood $U$ of $\ox$ and a number $\kappa> 0$ such that the distance estimate 
\begin{equation}\label{MSCQ}
\dist(x;\Gamma) \leq \kappa\,\dist (g(x);\Theta) \; \text{ for all }\; x\in U
\end{equation}
is satisfied. It follows from the definition that MSCQ is a {\em robust} property, i.e., MSCQ is fulfilled at every  $x \in \Gamma$ near $\ox$ whenever it holds
at $\ox \in \Gamma$. The next remark summarizes relationships between the above constraint qualification conditions that are broadly used in this paper.

\begin{Remark}\label{CQdiscuss} \rm Observe the following:
 
{\bf(i)} We get from \cite[Proposition~6.3]{kmp22convex} that SOCQ always yields FOCQ while the inverse implication is not true in general. This distinction is evident in classical nonlinear programming, where FOCQ is equivalent to MFCQ while SOCQ reduces to LICQ, which is stronger than MFCQ.

{\bf(ii)} As discussed above, FOCQ is equivalent to the metric regularity of the mapping $(x,\alpha)\mapsto (g(x),\alpha) -\Theta \times \R_{+}$ around the point $((\ox,0),(0,0))$. Using \cite[Proposition~3.1]{mms}, we deduce that the latter property implies that $g(x) -\Theta$ is metrically subregular at $(\ox,0)$, i.e., MSCQ is satisfied at $\ox$. The inverse implication fails in general. This is demonstrated by
\cite[Example~3.3(b)]{mms}.
\end{Remark}

The relationships between FOCQ, SOCQ, and MSCQ are illustrated in Figure~\ref{fig:CQ},
where the arrows $(\longrightarrow)$ read ``implies'' while the arrows $(\dashrightarrow)$ read ``may not imply''.\vspace*{-0.3in}
\begin{center}
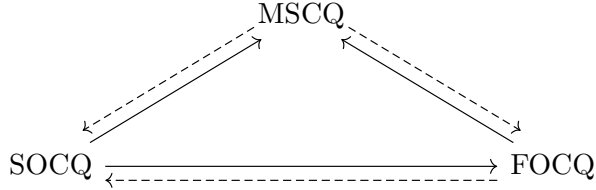
\begin{figure}
\centering
\begin{tikzcd}
&& \text{MSCQ} \\
\\
\text{SOCQ} &&&& \text{FOCQ}
\arrow[from=3-1, to=3-5]
\arrow[from=3-5, to=1-3]
\arrow[shift left=-2, dashed, from=1-3, to=3-1]
\arrow[shift left=2, dashed, from=1-3, to=3-5]
\arrow[shift left=2, dashed, from=3-5, to=3-1]
\arrow[from=3-1, to=1-3]
\end{tikzcd}
\caption{Relationships between constraint qualification conditions}
\label{fig:CQ}
\end{figure}
\end{center}

Next we recall a useful formula for calculating the regular coderivative of the normal cone mapping obtained in \cite[Corollary~3.8]{chieuhien} under MSCQ. 

\begin{Proposition}\label{chainrulecombine} Let  $\Theta \subset \R^m$ be a  nonempty polyhedral convex set, let the set $\Gamma \subset \R^n$ be given in \eqref{Gammas} where $g: \R^n\to  \R^m$ is a $\mathcal{C}^2$-smooth mapping. Suppose that MSCQ is satisfied at $\ox \in \Gamma$ and for each  $\ov \in N_\Gamma(\ox)$ define the sets
$$
P(\ox,\ov):=\big\{y\in\R^m\;\big|\;\ov=\nabla g(\ox)^*y,\;y\in N_{\Theta}\left(g(\ox)\right) \big\},
$$
\begin{equation}\label{linearprogramset}
S(v):= \text{\rm argmax} \left\{\langle v,\nabla^2 \langle y, g\rangle (\ox)v\rangle\;\big|\; y \in P(\ox,\ov)  \right\}  \; \text{for all } v \in K_\Gamma(\ox,\ov).  
\end{equation}
Then we have the calculation formula
\begin{equation*}
\widehat{D}^*N_{\Gamma}(\ox,\ov)(w) = \begin{cases}
\big\{u\;\big|\; \langle u, v\rangle -\langle w, \nabla^2 \langle y, g\rangle (\ox)v\rangle   \leq 0\;  \forall v \in K_\Gamma(\ox,\ov), \; y \in S(v)\big\} & \text{if }\;w \in - K_\Gamma(\ox,\ov),\\
\emptyset & \text{otherwise},
\end{cases}
\end{equation*}
where $K_\Gamma$ stands for the critical cone \eqref{criticalconeK}
associated with the set of feasible solutions \eqref{Gammas}.
\end{Proposition}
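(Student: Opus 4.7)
The plan is to base the computation on the duality $\widehat{N}_S(z) = T_S(z)^*$ between regular normals and contingent directions, which the excerpt records explicitly. Since $u \in \widehat{D}^* N_\Gamma(\ox,\ov)(w)$ is equivalent to $(u,-w) \in \widehat{N}_{\gph N_\Gamma}(\ox,\ov) = T_{\gph N_\Gamma}(\ox,\ov)^*$, the task reduces to describing $T_{\gph N_\Gamma}(\ox,\ov)$ and then polarizing; the desired formula should then read off as $\langle u,v\rangle - \langle w,z\rangle \le 0$ for every tangent direction $(v,z)$.

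First, I would exploit MSCQ together with the polyhedrality of $\Theta$ to parametrize $N_\Gamma$ locally through multipliers. Under MSCQ one has, on a neighborhood of $\ox$, the chain rule $N_\Gamma(x) = \nabla g(x)^* N_\Theta(g(x))$, so every $(x,v) \in \gph N_\Gamma$ near $(\ox,\ov)$ is captured by pairs $y \in N_\Theta(g(x))$ with $v = \nabla g(x)^* y$, i.e.\ $y \in P(x,v)$. This reparametrization is the workhorse that transfers the polyhedral structure of $\gph N_\Theta$ into $\gph N_\Gamma$, and it is where MSCQ is essential: the metric subregularity estimate is what permits extracting bounded and convergent multiplier sequences along arbitrary sequences in the graph.

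Second, I would compute $T_{\gph N_\Gamma}(\ox,\ov)$ by taking $(x_k,v_k) \in \gph N_\Gamma$ with $(x_k,v_k) \to (\ox,\ov)$ together with scaled differences converging to $(v,z)$. Picking associated multipliers $y_k \in P(x_k,v_k)$ (bounded by MSCQ on compact neighborhoods) and performing a second-order Taylor expansion of $g$ shows that $v$ must lie in $K_\Gamma(\ox,\ov)$, and that $z$ decomposes as $\nabla^2 \langle y,g\rangle(\ox)\,v + \nabla g(\ox)^* \eta$ for some $y \in P(\ox,\ov)$ and some $\eta$ tangent to $N_\Theta(g(\ox))$ at $y$ in the direction $\nabla g(\ox) v$. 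The polyhedrality of $\Theta$ is decisive here: the tangent cone to $\gph N_\Theta$ at $(g(\ox),y)$ is again a polyhedral cone with a clean description via $K_\Theta(g(\ox),y)$, so the whole object $T_{\gph N_\Gamma}(\ox,\ov)$ becomes a union, over $y \in P(\ox,\ov)$, of explicit polyhedral pieces.

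Third, polarizing this union yields the claimed formula. Testing $(u,-w)$ against the $v$-components forces $w \in -K_\Gamma(\ox,\ov)$, for otherwise one can find $v \in K_\Gamma(\ox,\ov)$ with $\langle -w,v\rangle > 0$ and scaling breaks the polar inequality, giving the empty-set alternative. On the remaining region, testing against the $z$-components produces, for each admissible $v$, the sharpest constraint at those multipliers maximizing the Hessian quadratic form, which is precisely $S(v)$ as defined in \eqref{linearprogramset}; the max-over-$y$ becomes a for-all-$y \in S(v)$ inequality because polarizing a union is intersecting polars. The main obstacle I foresee is the tangent-cone step under MSCQ alone: MSCQ is strictly weaker than SOCQ, so one cannot rely on uniqueness or even boundedness of $P(\ox,\ov)$ a priori, and the careful bookkeeping that extracts a limiting multiplier and identifies its optimality in $S(v)$ is the technical heart of the argument, carried out in \cite{chieuhien} via a reduction to the polyhedral critical-cone calculus.
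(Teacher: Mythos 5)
The paper does not prove this proposition at all: it is recalled verbatim from \cite[Corollary~3.8]{chieuhien} (see the sentence immediately preceding the statement), so there is no in-paper argument to compare yours with. Your outline — describe $T_{\gph N_\Gamma}(\ox,\ov)$ (equivalently the graphical derivative $DN_\Gamma(\ox,\ov)$) under MSCQ and polyhedrality, then polarize to get the regular coderivative — is indeed the route taken in \cite{chieuhien} (building on \cite{gm15}), so as a roadmap it points in the right direction. As a proof, however, it has concrete gaps beyond what can be waved off.

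First, the step ``picking associated multipliers $y_k\in P(x_k,v_k)$ (bounded by MSCQ on compact neighborhoods)'' is false as stated: MSCQ does not give boundedness of multipliers. Local boundedness of the multiplier map is tied to FOCQ/MFCQ (this is exactly Proposition~\ref{local-closed} of the paper, proved under FOCQ), and under MSCQ alone $P(\ox,\ov)$ can be unbounded; the actual argument in \cite{chieuhien,gm15} replaces boundedness by a Hoffman-type selection of suitable multipliers exploiting polyhedrality. You partially retract this at the end, but the retraction leaves the tangent-cone computation — the heart of the proof — entirely to the cited reference. Second, the polarization bookkeeping is off. The restriction $w\in-K_\Gamma(\ox,\ov)$ does not come from ``testing against the $v$-components'': tangent directions come as pairs and cannot be scaled componentwise. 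It comes from the cone part of the $z$-components, e.g.\ the directions $(0,q)$ with $q\in N_{K_\Gamma(\ox,\ov)}(0)=K_\Gamma(\ox,\ov)^*$, which force $\langle w,q\rangle\ge 0$ for all such $q$, i.e.\ $w\in-K_\Gamma(\ox,\ov)$; your version would at best yield $w\in-K_\Gamma(\ox,\ov)^*$. Likewise, ``polarizing a union is intersecting polars'' would produce a for-all-$y\in P(\ox,\ov)$ condition, which is not the stated formula: that only the maximizers $y\in S(v)$ of the quadratic form $\langle v,\nabla^2\langle y,g\rangle(\ox)v\rangle$ enter (and that the resulting set is the same for every such $y$) is precisely the nontrivial content of the graphical-derivative formula in \cite{chieuhien}, which your sketch invokes rather than establishes. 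In short: right strategy, but the two decisive steps (multiplier selection under MSCQ without boundedness, and the exact $S(v)$-description of the tangent cone) are missing, and the paper itself offers no proof to fill them — it cites \cite[Corollary~3.8]{chieuhien}, which is where those steps are carried out.
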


Employing the above coderivative calculation together with other results of variational analysis leads us to the following novel {\em second-order necessary optimality conditions} for local and strong local minimizers of the constrained problem \eqref{composite} involving the initial data of this problem. 

\begin{Theorem}\label{necforconstrained} Consider the constrained optimization problem \eqref{composite} in which $\Theta$ is a nonempty convex polyhedron. Given a local minimizer $\ox$ of \eqref{composite}, assume that  $f$ is of class $\mathcal{C}^{1,1}$ and $g$ is $\mathcal{C}^2$-smooth around $\ox$, and that MSCQ holds at this point. Then $\ox$ satisfies the first-order optimality
condition \eqref{1stcond}. If in addition $\oy$ in \eqref{1stcond} is unique, then we have the second-order necessary optimality condition
\begin{equation}\label{necescompcondi}
\langle z, w\rangle +  \langle w, \nabla^2 \langle \oy, g\rangle (\ox)w\rangle \geq 0
\end{equation}
for all $w \in -K_\Gamma(\ox,-\nabla f(\ox))$ and $z \in \breve{\partial}^2 f(\ox)(w)$.  In the case where $\ox$ is a strong local minimizer of \eqref{composite} with modulus $\sigma>0$, the second-order optimality condition
\begin{equation}\label{necescompcondiII}
\langle z, w\rangle +  \langle w, \nabla^2 \langle \oy, g\rangle (\ox)w\rangle \geq \sigma\|w\|^2
\end{equation}
holds for all $w \in -K_\Gamma(\ox,-\nabla f(\ox))$ and $z \in \breve{\partial}^2 f(\ox)(w)$. 
\end{Theorem}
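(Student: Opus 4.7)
The plan is to reformulate \eqref{composite} in the unconstrained extended-real-valued form \eqref{composiuncon} with $\varphi = f + \delta_\Gamma$, apply the necessary second-order conditions of Theorems~\ref{NecessI} and~\ref{NecessII} to $\varphi$, and then unfold $\breve{\partial}^2 \varphi(\ox, 0)$ into data-based quantities via the regular-coderivative sum rule (Lemma~\ref{sumrulecombine}) and Proposition~\ref{chainrulecombine}. To begin, local minimality of $\ox$ for \eqref{composite} means local minimality for $\varphi$, so Fermat's rule and the $\mathcal{C}^1$-smoothness of $f$ give $0\in\partial\varphi(\ox)=\nabla f(\ox)+N_\Gamma(\ox)$. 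MSCQ at $\ox$ produces the pre-image inclusion $N_\Gamma(\ox)\subset\nabla g(\ox)^* N_\Theta(g(\ox))$ via the standard subregularity calculus for $g-\Theta$, and thus yields $\oy\in N_\Theta(g(\ox))$ satisfying \eqref{1stcond}.

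Next I would verify that $\varphi$ is prox-regular at $\ox$ for $0$. Since $f\in\mathcal{C}^{1,1}$, this reduces to prox-regularity of $\delta_\Gamma$ at $\ox$ for $-\nabla f(\ox)$, which in the present setting (polyhedral $\Theta$, $\mathcal{C}^2$-smooth $g$, MSCQ at $\ox$) can be obtained from the chain rules for prox-regular indicators under metric subregularity, with $\delta_\Theta$ being a polyhedral (hence prox-regular) function on all of $\R^m$. Theorem~\ref{NecessI} (or Theorem~\ref{NecessII} in the strong-minimizer case) then supplies
\begin{equation*}
\langle z,w\rangle\ge 0\quad(\text{resp.}\;\langle z,w\rangle\ge\sigma\|w\|^2)\;\text{ whenever }\; z\in\breve{\partial}^2\varphi(\ox,0)(w),\;w\in\R^n.
\end{equation*}

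To convert this abstract bound into \eqref{necescompcondi}-\eqref{necescompcondiII}, apply Lemma~\ref{sumrulecombine} to the decomposition $\partial\varphi=\nabla f+N_\Gamma$ at $(\ox,0)$. Since $\nabla f$ is locally Lipschitz (hence calm at $\ox$) and $\widehat{D}^*\nabla f(\ox)=\breve{\partial}^2 f(\ox)$ by the scalarization formula for $\mathcal{C}^{1,1}$ data, one obtains
\begin{equation*}
\breve{\partial}^2 f(\ox)(w)+\widehat{D}^* N_\Gamma\big(\ox,-\nabla f(\ox)\big)(w)\subset\breve{\partial}^2\varphi(\ox,0)(w)\quad\text{for all }\;w\in\R^n.
\end{equation*}
Under MSCQ with polyhedral $\Theta$, Proposition~\ref{chainrulecombine} provides the explicit description of $\widehat{D}^* N_\Gamma(\ox,-\nabla f(\ox))(w)$; uniqueness of $\oy$ in \eqref{1stcond} forces $P(\ox,-\nabla f(\ox))=S(v)=\{\oy\}$ for every $v$, so the defining inequality becomes $\langle u,v\rangle\le\langle w,\nabla^2\langle\oy,g\rangle(\ox)v\rangle$ for all $v\in K_\Gamma(\ox,-\nabla f(\ox))$. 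The symmetry of the Hessian makes the candidate $u:=\nabla^2\langle\oy,g\rangle(\ox)w$ satisfy this inequality with equality, so $u\in\widehat{D}^* N_\Gamma(\ox,-\nabla f(\ox))(w)$ whenever $w\in-K_\Gamma(\ox,-\nabla f(\ox))$. Plugging $z+u$ into the lower bound from Theorem~\ref{NecessI}, respectively Theorem~\ref{NecessII}, and using bilinearity yields precisely \eqref{necescompcondi}, respectively \eqref{necescompcondiII}.

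The main obstacle I anticipate is the prox-regularity step: MSCQ is strictly weaker than FOCQ and does not directly place $\delta_\Gamma$ in the classical (strongly) amenable class, so some care is needed before Theorem~\ref{NecessI}/\ref{NecessII} can be invoked on $\varphi$. Once this is settled, the rest is a fairly mechanical combination of Lemma~\ref{sumrulecombine}, the $\mathcal{C}^{1,1}$ scalarization formula, and the explicit formula of Proposition~\ref{chainrulecombine} evaluated at the unique multiplier $\oy$.
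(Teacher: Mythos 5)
Your proposal is correct and follows essentially the same route as the paper's proof: reformulation as $\varphi=f+\delta_\Gamma$, the first-order condition via the normal-cone calculus under MSCQ, an appeal to Theorems~\ref{NecessI} and \ref{NecessII}, the regular-coderivative sum rule of Lemma~\ref{sumrulecombine}, and Proposition~\ref{chainrulecombine} evaluated at the unique multiplier $\oy$ together with the symmetry of $\nabla^2\langle\oy,g\rangle(\ox)$. The prox-regularity step you flag as the main obstacle is resolved in the paper exactly along the lines you indicate, by invoking the known fact that MSCQ combined with the polyhedrality of $\Theta$ yields continuous prox-regularity of $\delta_\Gamma$ at $\ox$ (Theorem~7.1 of \cite{mms}) and then passing to $\varphi=f+\delta_\Gamma$ with $f\in\mathcal{C}^{1,1}$ via Exercise~13.35 of \cite{Rockafellar98}.
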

\begin{proof}
By combining \cite[Proposition~1.30]{Mor18} and \cite[Lemma~2.5]{chieuhien} with the assumed local optimality at $\ox$, we get the equalities
\begin{equation*}
0\in \partial \varphi (\ox) = \nabla f(\ox) + N_{\Gamma}(\ox) = \nabla f(\ox) + \nabla g(\ox)^* N_{\Theta}\big( g(\ox)\big),
\end{equation*}
which give us the first-order optimality condition \eqref{1stcond}. The MSCQ at $\ox$ and the polyhedral convexity of $\Theta$ yield the continuous prox-regularity of $\delta_\Gamma$ at $\ox$ due to \cite[Theorem~7.1]{mms}. Using this together with the $\mathcal{C}^{1,1}$ property of $f$ around $\ox$, we get that the function $\varphi=f+\delta_\Gamma$ is continuously prox-regular at $\ox$ for $0$ due to \cite[Exercise~13.35]{Rockafellar98}. 

Next we verify the second-order condition \eqref{necescompcondi}   under the additional assumption that $\oy$ in \eqref{1stcond} is unique. Employing Theorem~\ref{NecessI} tells us that if $\ox$ is a local minimizer of $\varphi$, then 
\begin{equation}\label{PSDnecessarycomposite}
\langle z, w \rangle \geq 0\; \text{ whenever }\;z \in \breve{\partial}^2\varphi(\bar{x},0)(w),\;w \in \R^n.
\end{equation}
Furthermore, the coderivative sum rule from Lemma~\ref{sumrulecombine} ensures that
\begin{equation}\label{sumcombine}
\breve{\partial}^2 f(\ox)(w) + \breve{\partial}^2 \delta_\Gamma (\ox,-\nabla f(\ox))(w) \subset \breve{\partial}^2 \varphi (\ox,0)(w)\;\text{ for all }\; w \in \R^n. 
\end{equation}
Since $-\nabla f(\ox)\in N_\Gamma(\ox)$ and $\oy$ in \eqref{1stcond} is unique, it follows from Proposition~\ref{chainrulecombine} that 
$$
\widehat{D}^*N_{\Gamma}(\ox,-\nabla f(\ox))(w)=\big\{u\;\big|\; \langle u, v\rangle -\langle w, \nabla^2 \langle \oy, g\rangle (\ox)v\rangle   \leq 0\;  \forall v \in K_\Gamma(\ox,-\nabla f(\ox))\big\}
$$
for any $w\in -K_\Gamma(\ox,-\nabla f(\ox))$, and therefore
\begin{equation}\label{chaincombn}
\nabla^2\langle \oy, g\rangle (\ox)w   \in  \widehat{D}^*N_{\Gamma}(\ox,-\nabla f(\ox))(w) = \breve{\partial}^2\delta_\Gamma (\ox,-\nabla f(\ox))(w).
\end{equation}
Combining \eqref{sumcombine} and \eqref{chaincombn} brings us to the inclusion
\begin{equation}\label{inclnecess}
z + \nabla^2 \langle \oy, g\rangle (\ox)w \in \breve{\partial}^2\varphi(\ox,0)(w)\;\mbox{ for all }\;w\in -K_\Gamma(\ox,-\nabla f(\ox))\;\mbox{ and }\;z \in \breve{\partial}^2 f(\ox)(w).
\end{equation}
Using the latter together with \eqref{PSDnecessarycomposite} yields
$$
\langle z, w\rangle + \langle w,\nabla^2 \langle \oy, g\rangle (\ox) w\rangle \geq 0\;\mbox{ for all }\;w\in -K_\Gamma(\ox,-\nabla f(\ox))\;\mbox{ and }\;z \in \breve{\partial}^2 f(\ox)(w),
$$
which verifies \eqref{necescompcondi}. If $\ox$ is a strong local minimizer of \eqref{composite} with modulus $\sigma>0$, then we have by Theorem~\ref{NecessII} that
\begin{equation}\label{PSDnecessarycompositeII}
\langle z, w \rangle \geq \sigma\|w\|^2\;\text{ whenever }\;z \in \breve{\partial}^2\varphi(\bar{x},0)(w),\;w \in \R^n,
\end{equation} 	
and thus deduce from \eqref{PSDnecessarycompositeII} and \eqref{inclnecess} the second-order estimate 
$$
\langle z, w\rangle + \langle w,\nabla^2 \langle \oy, g\rangle (\ox) w\rangle \geq \sigma\|w\|^2\;\mbox{ for all }\;w\in -K_\Gamma(\ox,-\nabla f(\ox))\;\mbox{ and }\;z \in \breve{\partial}^2 f(\ox)(w).
$$
This justifies \eqref{necescompcondiII} and completes the proof of the theorem. 
\end{proof}

If MSCQ is replaced by SOCQ \eqref{SOCQ} in Theorem~\ref{necforconstrained}, then the required uniqueness of $\oy$ is automatically satisfied, and thus we arrive at: 

\begin{Corollary} \label{necforconstrainedSOCQ} Consider the optimization problem \eqref{composite}, where $\Theta$ is a nonempty convex polyhedron, where $f$ and $g$ are of class $\mathcal{C}^{1,1}$ and $\mathcal{C}^2$ around $\ox$, respectively, and where SOCQ \eqref{SOCQ} is satisfied at $\ox$. Then the following assertions hold:

{\bf(i)} If $\ox$ is a local minimizer of  problem \eqref{composite}, then $\ox$ satisfies the first-order optimality condition \eqref{1stcond}, where $\oy$ is unique. Moreover,  the second-order necessary condition \eqref{necescompcondi} is satisfied. 

{\bf(ii)} If $\ox$ is a strong local minimizer of \eqref{composite} with modulus $\sigma>0$, then $\ox$ satisfies the first-order optimality condition \eqref{1stcond}, where $\oy$ is unique, and the second-order condition \eqref{necescompcondiII} is satisfied.
\end{Corollary}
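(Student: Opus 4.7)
The plan is to obtain Corollary \ref{necforconstrainedSOCQ} as a direct specialization of Theorem \ref{necforconstrained}: under SOCQ, I will verify (a) that MSCQ is automatically satisfied, so Theorem \ref{necforconstrained} is applicable, and (b) that the multiplier $\oy$ in \eqref{1stcond} exists and is unique, which is the extra hypothesis required in Theorem \ref{necforconstrained} to obtain the second-order estimates \eqref{necescompcondi} and \eqref{necescompcondiII}. Once (a) and (b) are in place, assertion (i) follows from the local-minimizer part of Theorem \ref{necforconstrained}, and assertion (ii) from its strong-local-minimizer part.

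For (a) and the existence of $\oy$, I would invoke Remark \ref{CQdiscuss}: SOCQ$\Rightarrow$FOCQ by part (i), and FOCQ$\Rightarrow$MSCQ by part (ii). Combining FOCQ with the Fermat rule \eqref{Fermatrulecompo} applied to the local minimizer $\ox$ and with the standard subdifferential calculus for $\delta_{\Theta}\circ g$ then produces some $\oy\in N_{\Theta}(g(\ox))$ satisfying \eqref{1stcond}.

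The main step is uniqueness. Suppose $\oy$ and $\oy'$ both satisfy \eqref{1stcond}. Then $\oy'-\oy\in\ker\nabla g(\ox)^{*}$, and my goal is to show $\oy'-\oy\in D^{*}N_{\Theta}(g(\ox),\oy)(0)$; SOCQ \eqref{SOCQ} will then force $\oy'-\oy=0$. To prove this inclusion, I apply Proposition \ref{chainrulecombine} to the convex polyhedron $\Theta$ itself (taking $g$ to be the identity and $\Gamma=\Theta$, which trivially fulfills MSCQ and the $\mathcal{C}^{2}$ smoothness assumption, and for which the Hessian term $\nabla^{2}\la y,g\ra$ vanishes). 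The regular-coderivative formula then simplifies to
\[
\widehat{D}^{*}N_{\Theta}(g(\ox),\oy)(0)=K_{\Theta}(g(\ox),\oy)^{\circ}=\{u\in\R^{m}\mid\la u,v\ra\le 0\ \forall v\in K_{\Theta}(g(\ox),\oy)\}.
\]
For any $v\in K_{\Theta}(g(\ox),\oy)=T_{\Theta}(g(\ox))\cap\oy^{\perp}$ I have $\la\oy,v\ra=0$ by definition, and $\la\oy',v\ra\le 0$ because $\oy'\in N_{\Theta}(g(\ox))$ and $v\in T_{\Theta}(g(\ox))$; hence $\la\oy'-\oy,v\ra\le 0$. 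This gives $\oy'-\oy\in\widehat{D}^{*}N_{\Theta}(g(\ox),\oy)(0)\subset D^{*}N_{\Theta}(g(\ox),\oy)(0)$, and SOCQ closes the uniqueness argument.

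The expected main obstacle is the uniqueness step, since the natural identity involving the limiting coderivative $D^{*}N_{\Theta}$ is not entirely transparent from the definition; the trick of invoking Proposition \ref{chainrulecombine} with $g=\mathrm{id}$ to reduce everything to a polar-cone inequality on the critical cone is what makes the argument short. Once uniqueness is secured, the two assertions of the corollary follow verbatim from the two halves of Theorem \ref{necforconstrained}.
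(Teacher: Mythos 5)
Your overall route coincides with the paper's: both proofs reduce the corollary to Theorem~\ref{necforconstrained} by observing via Remark~\ref{CQdiscuss} that SOCQ implies FOCQ and hence MSCQ, and by securing the uniqueness of the multiplier $\oy$ in \eqref{1stcond}. The only genuine difference is the uniqueness step. The paper simply cites an external result (Theorem~4.3 of Mordukhovich--Rockafellar) stating that SOCQ forces the multiplier to be unique, whereas you prove it from scratch: given two multipliers $\oy,\oy'$, you note $\oy'-\oy\in\ker\nabla g(\ox)^*$, and you show $\oy'-\oy\in\widehat D^*N_\Theta(g(\ox),\oy)(0)\subset D^*N_\Theta(g(\ox),\oy)(0)$ by specializing Proposition~\ref{chainrulecombine} to $\Gamma=\Theta$ with $g=\mathrm{id}$ (MSCQ trivially holds, the Hessian term vanishes, $0\in-K_\Theta(g(\ox),\oy)$), so that $\widehat D^*N_\Theta(g(\ox),\oy)(0)$ is the polar of the critical cone $K_\Theta(g(\ox),\oy)$, and then using $\la\oy,v\ra=0$ together with $\la\oy',v\ra\le 0$ for $v\in K_\Theta(g(\ox),\oy)\subset T_\Theta(g(\ox))$. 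This argument is correct, and SOCQ then yields $\oy'=\oy$, after which both assertions follow from the two parts of Theorem~\ref{necforconstrained} exactly as in the paper. What your variant buys is self-containedness (no appeal to the second-order chain-rule paper), and in fact your key inclusion does not even need polyhedrality: a direct check of the definition of regular normals, using $\la\oy',z-g(\ox)\ra\le 0$ for $z\in\Theta$ and $\la v-\oy,z-g(\ox)\ra\ge\la-\oy,z-g(\ox)\ra$ for $v\in N_\Theta(z)$, gives $(\oy'-\oy,0)\in\widehat N_{\gph N_\Theta}(g(\ox),\oy)$ for any closed convex $\Theta$. The paper's citation is shorter but imports machinery; your proof is slightly longer but elementary.
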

\begin{proof} By Remark~\ref{CQdiscuss}, MSCQ holds at $\ox$. It follows from \cite[Theorem~4.3]{mor-roc} that $\oy$ in \eqref{1stcond} is unique under SOCQ. Therefore, both (i) and (ii) are implied by Theorem~\ref{necforconstrained}. 
\end{proof}

Note that SOCQ \eqref{SOCQ} is merely a sufficient condition to ensure MSCQ \eqref{MSCQ} and the uniqueness of $\oy$ in \eqref{1stcond}. The following example presents an optimization problem of type \eqref{composite} where MSCQ is satisfied and $\oy$ is unique in \eqref{1stcond}, while SOCQ fails.

\begin{Example}\label{nlpex} \rm Consider problem \eqref{composite} with the initial data
$$
f(x_1,x_2):= x_1^2 - x_2^2, \; g(x_1,x_2):=(-x_1+x_2, -x_1-x_2, x_2), \;
\Theta:= \R_{-}\times \R_{-} \times \{0\}
$$
for all $(x_1,x_2) \in \R^2$. Hoffman’s lemma \cite[Theorem~2.200]{bonsha} tells us that the mapping $x \mapsto g(x)-\Theta $ is metrically subregular at any point in $\R^2$, and so MSCQ is satisfied at any point of the plane. It is easy to see that
$N_\Theta(g(\ox)) = \R_{+}\times \R_{+}\times \R$ for $\ox:=(0,0)$, and thus condition \eqref{1stcond} reduces to
\begin{equation*}
\nabla f(\ox)+\nabla g(\ox)^*\oy=0\;\mbox{ for some }\;\oy=(\oy_1, \oy_2, \oy_3)\in \R_{+}\times \R_{+}\times \R,\;\mbox{ i.e., }
\end{equation*}
$$
-\oy_1 -\oy_2 =0, \; \oy_1 - \oy_2 + \oy_3 =0 \; \text{ for some }\; \oy=(\oy_1, \oy_2, \oy_3)\in \R_{+}\times \R_{+}\times \R. 
$$
The latter gives us $\oy_1=\oy_2=\oy_3 =0$, and hence $\oy = (0,0,0)$ is the unique vector satisfying \eqref{1stcond}. On the other hand, we deduce from \cite[Lemma~7.1]{kmp22convex} that  
$$
\begin{aligned}
D^*N_\Theta(g(\ox), \oy)(0)\cap \text{\rm ker} \nabla g(\ox)^* &=  \partial^2 \delta_{\Theta}(g(\ox))(0)\cap \text{\rm ker} \nabla g(\ox)^* \\
&=\big\{u=(u_1,u_2,u_3)\in\R^3\;\big|\;-u_1 -u_2 =0, u_1 -u_2 +u_3 =0\big\},
\end{aligned}
$$
which implies that the set $D^*N_\Theta(g(\ox), \oy)(0)\cap \text{\rm ker} \nabla g(\ox)^*$ is not a singleton, and therefore
the SOCQ condition \eqref{SOCQ} is not satisfied. 
\end{Example}

Finally in this section, we discuss relationships of our new second-order subdifferential necessary optimality conditions with known results in this direction.

\begin{Remark}\label{rem:2nc}
\rm Second-order necessary conditions for constrained optimization problems of type \eqref{composite} were obtained in the recent papers \cite{anyenxu23,anyen21} via the regular second-order subdifferential, which coincides with the combined one for $\mathcal{C}^1$-smooth functions; see \eqref{Frechet_Mordukhovich}. Observe the following:

{\bf (i)} An and Yen \cite{anyen21} derived the second-order necessary condition for \eqref{composite} in the case where the  constraint set is a convex polyhedron, and where $f$ is $\mathcal{C}^1$-smooth such that its gradient calm; see \cite[Theorem~5]{anyen21}. Note that the set $\Gamma$ in \eqref{Gammas} in our case is not necessarily polyhedral, or even convex.  

{\bf(ii)} In \cite[Theorem 3.5]{anyenxu23}, the authors established the second-order necessary condition for \eqref{composite} with $\Theta={0_{\R^m}}$ and the surjective Jacobian $\nabla g(\ox)$. In contrast, our results in Theorem~\ref{necforconstrained} are obtained under much less restrictive constraint qualifications, and the set $\Theta$ under consideration can be any convex polyhedron. Note that the results of \cite[Theorems~3.10 and 3.11]{anyenxu23} provide alternative second-order necessary conditions for \eqref{composite} under the Robinson constraint qualification. However, the latter results are applicable only when the mapping $g$ in \eqref{composite} is affine, which is not required in our conditions.
\end{Remark} \vspace*{-0.2in}

\section{Second-Order Sufficient Conditions in Constrained Optimization}\label{sec:2ndsuf}\vspace*{-0.05in}

In this section, we derive second-order subdifferential sufficient conditions for local and strong local minimizers of problem \eqref{composite} under the mild MSCQ condition in the case when $\Th$ is convex. For each $(x,v)\in\R^n\times\R^n$, define the {\em set of multipliers} for \eqref{composite} as follows: 
\begin{equation}\label{Lagrangemultiplier}
\Lambda (x,v):= \big\{y\in\R^m\;\big|\;v=\nabla f(x)+\nabla g(x)^*y,\;y\in N_{\Theta}\left(g(x) \right) \big\}.
\end{equation}

The next proposition shows that the multiplier 
 mapping in \eqref{Lagrangemultiplier} possesses the local boundedness property. Recall that a set-valued mapping $F:\R^n \rightrightarrows \R^m$ is {\em locally bounded} around $x\in \dom F$ if there is a neighborhood $U$ of $x$ for which $F(U)$ is bounded in $\R^m$. 

\begin{Proposition}\label{local-closed}
Consider problem \eqref{composite}, where FOCQ \eqref{1stqualifyconcompos} holds. Then the set-valued mapping of Lagrange multipliers $\Lambda:\R^n \times \R^n \rightrightarrows \R^m$ defined in \eqref{Lagrangemultiplier}  is locally bounded around $(\ox,0)$.
\end{Proposition}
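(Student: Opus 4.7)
The plan is a standard contradiction argument based on homogenizing the multiplier equation and exploiting the closed-graph (outer semicontinuity) property of the normal cone mapping to the convex set $\Theta$.

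Suppose, toward a contradiction, that $\Lambda$ is not locally bounded around $(\ox,0)$. Then I can pick sequences $(x_k,v_k)\to(\ox,0)$ and $y_k\in\Lambda(x_k,v_k)$ with $\|y_k\|\to\infty$. By the definition of $\Lambda$, this gives me the two relations
\begin{equation*}
v_k=\nabla f(x_k)+\nabla g(x_k)^*y_k\quad\text{and}\quad y_k\in N_\Theta\bigl(g(x_k)\bigr)\quad\text{for all }k\in\N.
\end{equation*}

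Next I would normalize by setting $\tilde y_k:=y_k/\|y_k\|$. Since $\|\tilde y_k\|=1$, a subsequence (which I do not relabel) converges to some $\tilde y\in\R^m$ with $\|\tilde y\|=1$. Dividing the first relation by $\|y_k\|$ yields
\begin{equation*}
\frac{v_k-\nabla f(x_k)}{\|y_k\|}=\nabla g(x_k)^*\tilde y_k.
\end{equation*}
The numerator on the left is bounded (as $v_k\to 0$ and $\nabla f(x_k)\to\nabla f(\ox)$ by $\mathcal{C}^1$-smoothness), while the denominator tends to infinity, so the left side tends to $0$. Passing to the limit in the right side using continuity of $\nabla g$ gives $\nabla g(\ox)^*\tilde y=0$, i.e., $\tilde y\in\ker\nabla g(\ox)^*$.

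On the other hand, since $N_\Theta(\cdot)$ is a cone, $\tilde y_k\in N_\Theta(g(x_k))$ for every $k$. Because $\Theta$ is closed and convex, the normal cone mapping $N_\Theta$ has closed graph, and $g(x_k)\to g(\ox)$ by continuity of $g$; hence $\tilde y\in N_\Theta(g(\ox))$. Combining the two memberships I get
\begin{equation*}
\tilde y\in N_\Theta\bigl(g(\ox)\bigr)\cap\ker\nabla g(\ox)^*=\{0\}
\end{equation*}
by FOCQ \eqref{1stqualifyconcompos}, which contradicts $\|\tilde y\|=1$. This contradiction shows that $\Lambda$ is locally bounded around $(\ox,0)$, completing the proof.

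The argument is essentially routine; the only subtle point is making sure the right-hand side of the normalized equation actually converges to zero, which uses that both $v_k$ and $\nabla f(x_k)$ stay bounded while $\|y_k\|\to\infty$. No deep tool beyond the closed-graph property of $N_\Theta$ (for a closed convex set) and the FOCQ hypothesis is needed.
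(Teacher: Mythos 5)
Your proof is correct and follows essentially the same route as the paper's: normalize the unbounded multiplier sequence, pass to the limit in the homogenized stationarity equation to land in $\ker\nabla g(\ox)^*$, use the cone property and closed graph of $N_\Theta$ to land in $N_\Theta(g(\ox))$, and contradict FOCQ. The only difference is cosmetic (you move $\nabla f(x_k)$ to the left before dividing by $\|y_k\|$, while the paper keeps it on the right).
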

\begin{proof}
Assuming the contrary, construct a sequence $(x_k,v_k,y_k)\in \gph \Lambda$ for which $(x_k,v_k) \rightarrow (\ox,0)$ while $\|y_k\|\rightarrow \infty$ as $k\rightarrow \infty$. By the definition of $\Lambda(\cdot,\cdot)$, for every $k\in\N$ we have $y_k\in N_{\Theta}(g(x_k))$ and the relationship $v_k = \nabla f(x_k) + \nabla g(x_k)^* y_k$. This gives us therefore that
\begin{equation}\label{nvc}
\dfrac{v_k}{\|y_k\|}= \dfrac{\nabla f(x_k)}{\|y_k\|}+\nabla g(x_k)^*\left(\dfrac{y_k}{\|y_k\|}\right)\;\text{ for large }\;k\in \N.
\end{equation}
Suppose without loss of generality the convergence $\frac{y_k}{\|y_k\|}\rightarrow \tilde{y}$ as $k\to\infty$ for some $\tilde{y}\in \R^m$ with $\|\tilde{y}\|=1$. Moreover, since $(x_k,v_k)\rightarrow (\ox,0)$ and $\|y_k\|\rightarrow \infty$ while $f,g$ are continuously differentiable, the passage to the limit in \eqref{nvc} as $k\rightarrow \infty$ leads us to the equality
\begin{equation}\label{break-1}
\nabla g(\ox)^* \tilde{y}=0.  
\end{equation}
On the other hand, by $\frac{y_k}{\|y_k\|}\in N_{\Theta}(g(x_k))$ for large $k\in \N$ and the closedness of the set $\gph N_\Th$, we get $\tilde{y}\in N_{\Theta}(g(\ox))$. Combining the latter with \eqref{break-1} and employing the imposed FOCQ amount to $\tilde{y}=0$, which clearly contradicts the fact that $\|\tilde{y}\|=1$ and thus completes the proof.
\end{proof}

One more lemma is needed to establish the desired second-order sufficient condition below. 

\begin{Lemma}\label{nonemptylinear} In the setting of Proposition~{\rm\ref{chainrulecombine}}, we have that  $S(v)\ne \emptyset$ for all $v \in K_\Gamma(\ox,\ov)$, where the sets $S(v)$ are defined in \eqref{linearprogramset}. 
\end{Lemma}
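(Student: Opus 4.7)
The plan is to observe that $S(v)$ is the argmax of a linear function over a polyhedral set, and then show (via a Taylor expansion argument) that this linear function is bounded above, so that the argmax is nonempty.

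First I would check that $P(\ox,\ov)$ is a nonempty polyhedron. Nonemptiness follows from the normal cone formula $N_\Gamma(\ox) = \nabla g(\ox)^* N_\Theta(g(\ox))$, valid under MSCQ with polyhedral $\Theta$ (this is the same Lemma~2.5 of \cite{chieuhien} already invoked in the proof of Theorem~\ref{necforconstrained}), since $\ov \in N_\Gamma(\ox)$ produces some $y \in N_\Theta(g(\ox))$ with $\nabla g(\ox)^* y = \ov$, i.e.\ $y \in P(\ox,\ov)$. Polyhedrality is immediate: $N_\Theta(g(\ox))$ is polyhedral because $\Theta$ is, and one intersects it with the affine subspace $\{y\mid \nabla g(\ox)^* y=\ov\}$.

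Next I would record that the objective in \eqref{linearprogramset} is linear in $y$:
\begin{equation*}
\phi_v(y):=\la v,\nabla^2\la y,g\ra(\ox)v\ra=\sum_{i=1}^m y_i\,\la v,\nabla^2 g_i(\ox)v\ra.
\end{equation*}
By elementary linear programming, the argmax of a linear functional over a nonempty polyhedron is nonempty if and only if the functional is bounded above on it, and since $N_\Theta(g(\ox))$ is a polyhedral cone (hence its own recession cone), the recession cone of $P(\ox,\ov)$ equals $N_\Theta(g(\ox))\cap\ker\nabla g(\ox)^*$. Therefore it suffices to prove
\begin{equation}\label{eq:planbound}
\phi_v(y)\le 0\quad\text{whenever}\quad y\in N_\Theta(g(\ox))\cap\ker\nabla g(\ox)^*\ \text{ and }\ v\in K_\Gamma(\ox,\ov).
\end{equation}

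Finally I would verify \eqref{eq:planbound} by a second-order expansion along a tangent sequence. Since $v\in T_\Gamma(\ox)$, choose $t_k\dn 0$ and $v_k\to v$ with $\ox+t_k v_k\in\Gamma$, so that $g(\ox+t_k v_k)\in\Theta$. Convexity of $\Theta$ and $y\in N_\Theta(g(\ox))$ yield $\la y,g(\ox+t_k v_k)-g(\ox)\ra\le 0$. The $\mathcal{C}^2$-Taylor expansion of $g$ at $\ox$ combined with $\nabla g(\ox)^* y=0$ kills the first-order term, leaving
\begin{equation*}
\la y,g(\ox+t_k v_k)-g(\ox)\ra=\tfrac{t_k^2}{2}\la v_k,\nabla^2\la y,g\ra(\ox)v_k\ra+o(t_k^2)\le 0.
\end{equation*}
Dividing by $t_k^2/2$ and letting $k\to\infty$ gives $\phi_v(y)\le 0$, which establishes \eqref{eq:planbound} and hence $S(v)\ne\emp$.

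The main obstacle, such as it is, lies in justifying nonemptiness and polyhedrality of $P(\ox,\ov)$ under just MSCQ (rather than a stronger regularity), but this is handled cleanly by the already-cited normal cone formula; everything else is routine LP reasoning plus a Taylor expansion, with the cancellation of the first-order term coming from membership in $\ker\nabla g(\ox)^*$.
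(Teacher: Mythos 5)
Your proof is correct, but it takes a genuinely different route from the paper's. The paper proves the lemma by rewriting the polyhedron $\Theta$ as $\{y\mid\la a_i,y\ra\le b_i,\ i=1,\dots,p\}$, passing to the equivalent inequality-constrained representation $\Gamma=\{x\mid q(x)\in\R^p_-\}$ with $q(x)=A^*g(x)-b$, checking via Hoffman's lemma that MSCQ transfers to this representation, and then quoting \cite[Proposition~4.3]{gm15} for the nonemptiness of the corresponding argmax set $\widehat S(v)$; finally it maps everything back through $S(v)=A(\widehat S(v))$. You instead argue directly: $P(\ox,\ov)$ is a nonempty polyhedron (nonemptiness from $N_\Gamma(\ox)=\nabla g(\ox)^*N_\Theta(g(\ox))$ under MSCQ, the same \cite[Lemma~2.5]{chieuhien} the paper already uses), the objective in \eqref{linearprogramset} is linear in $y$, so by the fundamental theorem of linear programming it suffices to show the objective is nonpositive on the recession cone $N_\Theta(g(\ox))\cap\ker\nabla g(\ox)^*$, and this follows from a second-order Taylor expansion of $\la y,g(\cdot)\ra$ along a tangent sequence realizing $v\in T_\Gamma(\ox)$, with the first-order term killed by $\nabla g(\ox)^*y=0$ and the sign coming from $y\in N_\Theta(g(\ox))$ and $g(\ox+t_kv_k)\in\Theta$. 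All hypotheses you invoke ($\Theta$ polyhedral, $g$ of class $\mathcal{C}^2$, MSCQ) are part of the setting of Proposition~\ref{chainrulecombine}, and each step is sound. What your approach buys is self-containedness and transparency: it exposes exactly why the supremum is finite (the second-order tangential inequality) rather than outsourcing that to \cite{gm15}, and it avoids the change of representation and the verification that MSCQ survives it. What the paper's approach buys is brevity and the reuse of an established result, together with the explicit identification $S(v)=A(\widehat S(v))$ which links the multipliers in the two representations.
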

\begin{proof} Since $\Theta$ is a convex polyhedron, we find $a_i \in \R^m$ and $b_i \in \R$ for $i = 1,\ldots, p$ such that 
$$
\Theta=\big\{y \in \R^m\;\big|\;\langle a_i, y\rangle \leq b_i, \; i=1,\ldots,p\big\}.
$$
Define $q(x):= A^*g(x) - b$, $x \in \R^n$, where $A$ is the $m\times p$ matrix with the $i$th column $a_i$, $i=1,\ldots,p$, and where $b=(b_1,\ldots, b_p)$. We rewrite $\Gamma$ as 
$\Gamma = \{x \in \R^n\;|\; q(x) \in \R^p_{-}\}$
and define the following sets:
$$
\widehat{P}(\ox,\ov):=\big\{z\in\R^p\;\big|\;\ov=\nabla q(\ox)^*z,\;z\in N_{\R_{-}^p}\left(q(\ox)\right)\big\},
$$
$$
\widehat{S}(v):= \text{\rm argmax} \left\{\langle v,\nabla^2 \langle z, q\rangle (\ox)v\rangle\;\big|\; z \in \widehat{P}(\ox,\ov)  \right\}  \; \text{ for all }\;v \in K_\Gamma(\ox,\ov).  
$$
Hoffman's lemma ensures that MSCQ is satisfied in our setting, i.e., the mapping $x\mapsto q(x)-\R_{-}^p$ is metrically subregular at $(\ox,0)$. Using the result of \cite[Proposition~4.3]{gm15} tells us that $\widehat{S}(v)\ne \emptyset$ for all $v \in K_\Gamma(\ox,\ov)$. Furthermore, arguing as in the proof of 
\cite[Theorem~3.5]{chieuhien} brings us to the equalities
$$
P(\ox,\ov) = A\big(\widehat{P}(\ox,\ov)\big)\; 
\text{ and }\; 
\langle v, \nabla^2 \langle z, q\rangle (\ox)v \rangle =  \langle v, \nabla^2 \langle Az, g\rangle (\ox)v\rangle \; \text{for all }\; z \in   \widehat{P}(\ox,\ov),
$$
which imply that  $S(v)= A(\widehat{S}(v))$ for all $v \in K_\Gamma(\ox,\ov)$. 
Combining this with the fact that  $\widehat{S}(v)\ne \emptyset$, we arrive at $S(v) \ne \emptyset$ for all $v \in K_\Gamma(\ox,\ov)$, which completes the proof of the lemma. 
\end{proof}

Here are the aforementioned {\em second-order subdifferential sufficient conditions} for both local and strong local minimizers in the constrained problem \eqref{composite}.

\begin{Theorem}\label{suff2ndconstraint} Consider problem \eqref{composite}, where $\Theta$ is a convex polyhedron, and where $\ox$ satisfies the first-order optimality condition \eqref{1stcond}. Assume that both $f$ and $g$ are $\mathcal{C}^{2}$-smooth around $\ox$, and that
MSCQ is fulfilled at this point. Then we have the following assertions:

{\bf(i)} If there exist neighborhoods $U$ of $\ox$ and $V$ of $0$ such that 
\begin{equation}\label{sufficient2nd}
\langle \nabla^2 f(x)w,w\rangle + \langle w, \nabla^2 \langle y, g\rangle (x)w\rangle \geq 0 
\end{equation}
for all $x \in U$,  $v \in V$, $w \in -K_\Gamma(x,v-\nabla f(x))$, and $y \in \Lambda(x,v )$, where the set-valued mapping $\Lambda$ is defined in \eqref{Lagrangemultiplier}, then the function $f+ \delta_{\Gamma}$ is variationally convex at $\ox$ for $\ov=0$.  Consequently, $\ox$ is a local minimizer of the constrained problem \eqref{composite}.

{\bf(ii)} If there exist neighborhoods $U$ of $\ox$ and $V$ of $0$ such that 
\begin{equation}\label{sufficient2ndII}
\langle \nabla^2 f(x)w,w\rangle + \langle w, \nabla^2 \langle y, g\rangle (x)w\rangle \geq \sigma\|w\|^2 
\end{equation}
for all $x \in U$,  $v \in V$, $w \in -K_\Gamma(x,v-\nabla f(x)),   \;  y \in \Lambda(x,v )$ and some $\sigma>0$, where the set-valued mapping $\Lambda$ is defined in \eqref{Lagrangemultiplier}, then the function $f+ \delta_{\Gamma}$ is strongly variationally convex at $\ox$ for $0$ with modulus $\sigma$. Consequently, $\ox$ is a strong (in fact tilt-stable) local minimizer of \eqref{composite} with modulus $\sigma^{-1}$.
\end{Theorem}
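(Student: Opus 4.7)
The plan is to establish (i) by verifying the variational convexity of $\varphi:=f+\delta_\Gamma$ at $\ox$ for $0$ through the coderivative characterization in Theorem~\ref{theo:codforvc}(iii), and to handle (ii) by carrying out the same computation with the strengthened inequality, which amounts to NSOSC~\eqref{2ndPSDaround} and hence to strong variational convexity with modulus $\sigma$; Proposition~\ref{equitiltstr} then converts the latter into tilt stability with modulus $\sigma^{-1}$, which in particular gives a strong local minimizer.

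I would first collect the regularity ingredients. MSCQ combined with the polyhedrality of $\Theta$ yields continuous prox-regularity of $\delta_\Gamma$ at $\ox$ for every element of $N_\Gamma(\ox)$ via \cite[Theorem~7.1]{mms}, and since $f\in\mathcal{C}^2$ around $\ox$, \cite[Exercise~13.35]{Rockafellar98} makes $\varphi$ continuously prox-regular at $\ox$ for $0$. The subdifferential continuity of $\varphi$ allows me to invoke the second-order subdifferential form of Theorem~\ref{theo:codforvc}(iii): it suffices to display a neighborhood $W$ of $(\ox,0)$ on which
\begin{equation*}
\la z,w\ra\ge 0\quad\text{for every }(u,\nu)\in\gph\partial\varphi\cap W,\ w\in\R^n,\ z\in\breve{\partial}^2\varphi(u,\nu)(w),
\end{equation*}
for part (i), with the analogous lower bound $\sigma\|w\|^2$ for part (ii).

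The decisive step is an exact computation of $\breve{\partial}^2\varphi$. Because $\nabla f$ is locally Lipschitz and Fréchet differentiable, the equality case of the regular coderivative sum rule from Lemma~\ref{sumrulecombine} yields
\begin{equation*}
\breve{\partial}^2\varphi(u,\nu)(w)=\nabla^2 f(u)w+\widehat D^* N_\Gamma\big(u,\nu-\nabla f(u)\big)(w).
\end{equation*}
Since MSCQ is robust, it persists at every $u$ near $\ox$ and Proposition~\ref{chainrulecombine} is available there. In particular, $\breve{\partial}^2\varphi(u,\nu)(w)=\emptyset$ unless $w\in -K_\Gamma(u,\nu-\nabla f(u))$, so I fix such a $w$, pick $z\in\breve{\partial}^2\varphi(u,\nu)(w)$, and decompose $z=\nabla^2 f(u)w+z'$ with $z'\in\widehat D^* N_\Gamma(u,\nu-\nabla f(u))(w)$. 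Inspection of the definitions gives the identity $\Lambda(u,\nu)=P(u,\nu-\nabla f(u))$, and Lemma~\ref{nonemptylinear} produces some $y\in S(-w)\subset\Lambda(u,\nu)$. Testing the coderivative inequality of Proposition~\ref{chainrulecombine} at the admissible direction $v=-w\in K_\Gamma(u,\nu-\nabla f(u))$ with this $y$ delivers $\la z',w\ra\ge\la w,\nabla^2\la y,g\ra(u)w\ra$; adding $\la\nabla^2 f(u)w,w\ra$ and invoking hypothesis \eqref{sufficient2nd} (respectively \eqref{sufficient2ndII}) at $(x,v)=(u,\nu)\in U\times V$ with this specific $y\in\Lambda(u,\nu)$ produces $\la z,w\ra\ge 0$ (resp.\ $\ge\sigma\|w\|^2$).

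The main technical nuisance I anticipate is the neighborhood bookkeeping: $W$ must be shrunk so that every $(u,\nu)\in\gph\partial\varphi\cap W$ lands inside $U\times V$, so that MSCQ and Proposition~\ref{chainrulecombine} apply at $u$, and so that Lemma~\ref{nonemptylinear} yields a nonempty $S(-w)$; this is routine once one notes that $(u,\nu)\to(\ox,0)$ along $\gph\partial\varphi$ forces $u\to\ox$ and $\nu\to 0$. Once this is arranged, Theorem~\ref{theo:codforvc}(iii) delivers variational convexity of $\varphi$ at $\ox$ for $0$ in case (i), so $\ox$ is a local minimizer of \eqref{composite} since $0\in\partial\varphi(\ox)$ by the first-order condition~\eqref{1stcond}. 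For case (ii), the strengthened inequality is exactly NSOSC~\eqref{2ndPSDaround} for $\varphi$, which by \cite[Theorem~5.4]{kmp22convex} recalled in Remark~\ref{SuffI} characterizes strong variational convexity with modulus $\sigma$, and Proposition~\ref{equitiltstr} converts this into a tilt-stable local minimizer with modulus $\sigma^{-1}$, completing the argument.
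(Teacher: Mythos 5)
Your proposal is correct and follows essentially the same route as the paper: establish continuous prox-regularity of $\varphi=f+\delta_\Gamma$ from MSCQ and polyhedrality, compute $\breve{\partial}^2\varphi$ via the regular coderivative sum rule and Proposition~\ref{chainrulecombine}, use Lemma~\ref{nonemptylinear} together with the identity $\Lambda(u,\nu)=P(u,\nu-\nabla f(u))$ to extract a multiplier $y$ and test the coderivative inequality at $v=-w$, and then pass to (strong) variational convexity through the neighborhood positivity of $\breve{\partial}^2\varphi$. The only cosmetic difference is that you invoke the paper's own Theorem~\ref{theo:codforvc} for part (i) where the paper cites the corresponding characterization from \cite{kmp22convex}; these rest on the same fact, so nothing of substance changes.
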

\begin{proof} It follows from \cite[Proposition~1.30] {Mor18} and \cite[Lemma~2.5]{chieuhien} that the first-order optimality condition \eqref{1stcond} is equivalent to the inclusion $0 \in \partial\varphi(\ox)$ with $\varphi:= f+ \delta_{\Gamma}$. Employing the same arguments in the proof of Theorem~\ref{necforconstrained} verifies that $\varphi$ is continuously prox-regular at $\ox$ for $0$. Now we are going to apply the results of \cite[Theorems~5.2 and 5.4]{kmp22convex} to the above function $\ph$ by showing that our assumptions guarantee the fulfillment of the conditions for the variational and strong variational convexity therein. To proceed, let us verify that there are neighborhoods $U$ of $\ox$ and $V$ of $0$ such that   
\begin{equation}\label{PSDsufcomposite}
\langle z, w \rangle \geq 0\;\text{ whenever }\;z \in \breve{\partial}^2\varphi(x,v)(w), \; (x,v)\in\gph\partial\varphi \cap (U\times V),\; w \in \R^n,
\end{equation} 	
\begin{equation}\label{PSDsufcompositeII}
\langle z, w \rangle \geq \sigma\|w\|^2\;\text{ whenever }\;z \in \breve{\partial}^2\varphi(x,v)(w), \; (x,v)\in\gph\partial\varphi \cap (U\times V),\; w \in \R^n,
\end{equation} 	
under the fulfillment of \eqref{sufficient2nd} and \eqref{sufficient2ndII}, respectively. 
Since MSCQ is satisfied at $\ox$, it also holds around this point. Assuming that \eqref{sufficient2nd} is fulfilled, we get without loss of generality that MSCQ is satisfied at any $x \in U$ and that $f$ is $\mathcal{C}^2$-smooth on $U$. It follows from  \cite[Theorem~1.62]{Mordukhovich06} that  
\begin{equation}\label{sumcombine2}
\breve{\partial}^2 \varphi (x,v)(w)=\nabla^2 f(x)w + \breve{\partial}^2 \delta_\Gamma (x,v-\nabla f(x))(w)\; \text{ for all }\;x \in U, \;  (x,v)\in\gph\partial\varphi,\; w \in \R^n. 
\end{equation} 
Hence whenever $x\in U$, $w \in \R^n$, $(x,v)\in \gph\partial\varphi$, and $z \in \breve{\partial}^2 \varphi(x,v)(w)$, we deduce from \eqref{sumcombine2} that 
$$
z - \nabla^2 f(x) w \in \breve{\partial}^2 \delta_\Gamma (x,v-\nabla f(x))(w)=\widehat{D}^*N_\Gamma(x,v-\nabla f(x))(w).
$$
Employing Proposition~\ref{chainrulecombine} and Lemma~\ref{nonemptylinear} ensures that $w \in -K_\Gamma(x,v-\nabla f(x))$, and that there exists a multiplier $y \in \Lambda(x,v)$ giving us
$$
\langle z - \nabla^2 f(x) w, -w\rangle - \langle w, \nabla^2 \langle y, g\rangle (x)(-w)\rangle\leq 0
$$
for all $x \in U$, $w \in \R^n$, $(x,v)\in \gph\partial\varphi$, and $z \in \breve{\partial}^2 \varphi(x,v)(w)$. Combining the latter with \eqref{sufficient2nd} yields
$$
\langle z, w\rangle \geq \langle \nabla^2 f(x)w, w\rangle + \langle \nabla^2 \langle y, g\rangle (x) w, w\rangle \ge 0
$$
when $z \in \breve{\partial}^2\varphi(x,v)(w)$, $(x,v)\in\gph\partial\varphi \cap (U\times V)$, and $w \in \R^n$. Thus \eqref{PSDsufcomposite} and \eqref{PSDsufcompositeII} are verified under the fulfillment of \eqref{sufficient2nd} and \eqref{sufficient2ndII}, respectively. The last statement of the theorem follows from Proposition~\ref{equitiltstr}, which therefore completes the proof. 
\end{proof}

To conclude this section, we discuss the relationships of the obtained results with other second-order subdifferential sufficient optimality conditions for local minimizers of \eqref{composite}.
 
\begin{Remark}\label{rem:2ndsuf} \rm Quite recently, some second-order sufficient conditions for local minimizers of problem \eqref{composite} with $\Theta= 0_{\R^m}$ have been derived in \cite{anyenxu23} via the regular second-order subdifferential from Definition~\ref{2ndsub}(iii). The main assumptions in \cite{anyenxu23} are the surjectivity of the Jacobian $\nabla g(\ox)$ and the positive-semidefiniteness of  $\widehat{\partial}^2\mathcal{L}(x)$ on the whole space, where $\mathcal{L}$ is the Lagrange function in  \eqref{composite}. Note that these assumptions imply the {\em local convexity} of the Lagrange function $\mathcal{L}$. Meanwhile, the set $\Theta$ in our consideration is a general polyhedral set, the assumptions in Theorem~\ref{suff2ndconstraint} do not imply the local convexity of the Lagrange function while being essentially weaker than the positive-semidefiniteness of  $\widehat{\partial}^2\mathcal{L}(x)$ on the whole space. Moreover, our constraint qualification is MSCQ, which is much weaker than the surjectivity of the Jacobian $\nabla g(\ox)$. 
\end{Remark}\vspace*{-0.2in}

\section{Applications to Nonlinear Programming}\label{sec:appnonl}\vspace*{-0.05in}

The main goal of this section is to establish {\em second-order necessary and sufficient optimality conditions} for local and strong local minimizers of {\em nonlinear programs} (NLPs) formulated as follows:
\begin{equation}\label{NLPproblem}
\min\quad f(x)\;\text{ subject to }\;
\begin{cases}
\varphi_i(x)\le 0&\text{for }\;i=1,\ldots,s,\\
\varphi_i(x)=0&\text{for }\;i=s+1,\ldots,m,
\end{cases}
\end{equation}
where $f$ is a function of class $\mathcal{C}^{1,1}$, and  where $\varphi_i$ for $i=1,\ldots,m$ are $\mathcal{C}^2$-smooth functions around the reference points. The optimality conditions for NLPs obtained below will be derived as specifications of those for the general constrained optimization problems in Sections~\ref{sec:optiforconstrained} and \ref{sec:2ndsuf} while being expressed in term of the initial data of \eqref{NLPproblem} under the corresponding version of the fairly mild MSCQ. 

Problem \eqref{NLPproblem} can be obviously written in the form of \eqref{composite} as 
$$
\min \quad f (x)  \; \text{ subject to }\; g(x) \in \Theta:=\R_{-}^{s}\times \{0_{m-s}\}
$$
\color{black}
with $g(x):=(\varphi_1(x),\ldots,\ph_m(x))$ and the set of the NLP feasible solutions is $\Gamma:=\big\{x \in \R^n\;\big|\; g(x) \in \Theta\big\}$. Define the corresponding {\em Lagrangian function} $L:\R^n\times\R^m\to\R$ by 
\begin{equation}\label{Lagrangefunction}
L(x,y):= f(x)+\langle y,g(x)\rangle\;\text{ for all }\;x\in\R^n,\;y\in\R^m
\end{equation}
together with the index collections
\begin{equation}\label{index}
I(x):=\big\{i\in\{1,\ldots,s\}\;\big|\;\ph_i(x)=0\big\} \;\text{ and }\; I_+(x,y):=\big\{i\in I(x)\;\big|\;y_i>0\big\}.
\end{equation}
Further, for each $(x,y)\in\R^n\times\R^m$ consider the cone 
\begin{equation}\label{coneS}
S(x,y):=\left\{w\in\R^n\;\bigg|\; \begin{cases}
\langle\nabla\ph_i(x),w\rangle=0&\text{if }\;i\in\big\{s+1,\ldots,m\big\}\cup I_+(x,y),\\
\langle\nabla\ph_i(x),w\rangle\geq  0 & \text{if }\;i\in I(x)\setminus I_+(x,y)
\end{cases} \right\}.
\end{equation}
Next we recall for the reader's convenience the classical constraint qualification conditions in NLP. Given a feasible solution $\ox$ to \eqref{NLPproblem}, it is said that the {\em linear independence constraint qualification} (LICQ) holds at $\ox$ of \eqref{NLPproblem} if the vectors
\begin{equation}\label{linearICQ}
\nabla\ph_i(\ox)\;\mbox{ for }\;i\in I(\ox) \cup \{s+1,\ldots,m\}\;\text{ are linearly independent},
\end{equation} 
where $I(\ox)$ stands for the collection of active inequality constraint indices at $\ox$ taken from \eqref{index}. The {\em positive linear independence constraint qualification} (PLICQ) is satisfied at $\ox$ if 
\begin{equation}\label{linearPICQ}
 {\bigg[\disp\sum_{i\in I(\ox)\cup\{s+1,\ldots,m\}}\al_i\nabla\ph_i(\ox)=0,\;\al_i\ge 0,\;i \in I(\ox)  \bigg] \Longrightarrow\big[\al_i=0\;\mbox{ if }\; i \in I(\ox)\cup \{s+1,\ldots,m\} \big].}
\end{equation} 
Note that both of these constraint qualifications are robust with respect to small perturbations of $\ox$. In fact, \eqref{linearPICQ} is a dual form of the {\em Mangasarian-Fromovitz constraint qualification} (MFCQ) at $\ox$.
It follows from \cite[Lemma~7.2]{kmp22convex} that PLICQ and LICQ  are equivalent to FOCQ \eqref{1stqualifyconcompos} and SOCQ \eqref{SOCQ}, respectively, and thus our major MSCQ \eqref{MSCQ} is weaker than PLICQ. For the NLP model \eqref{NLPproblem}, we can equivalently describe MSCQ \eqref{MSCQ} via the existence of a neighborhood $U$ of $\ox$ and $\kappa>0$ such that
\begin{equation}\label{MSCQNLP}
\dist(x;\Gamma) \leq \kappa \left(\sum_{i=1}^s \max\{\varphi_i(x),0\} +\sum_{i=s+1}^m |\varphi_i(x)|  \right)\;\text{ for all }\; x \in U.  
\end{equation}

To derive second-order necessary and sufficient conditions for \eqref{NLPproblem} in terms of the given data, explicit formulas for the set of multipliers and  the critical cone for NLPs are needed. Here they are.

\begin{Proposition}\label{calNLPex} Let $\Theta=\R_{-}^{s}\times \{0_{m-s}\}$. Then  we have
\begin{equation}\label{normalconeKs}
N_\Theta(z)= F_1(z_1)\times\ldots\times  F_m(z_m)\;{\text{ for all }\;z\in \Theta,}
\end{equation} 
\begin{equation}\label{tangentconeKs}
T_\Theta(z)= G_1(z_1)\times \ldots \times G_m(z_m) \; \text{ for all }\; z \in \Theta,
\end{equation}
where each multifunction $F_i\colon\R\tto\R$ and $G_i \colon \R \tto \R$ is defined by
\begin{equation*}
F_i(t):=\begin{cases}
[0,\infty)&\text{if}\quad t=0,\;1\le i\le s,\\
\{0\} &\text{if}\quad t<0,\;1\le i\leq s,\\
\R &\text{if}\quad t =0,\; s+1\le i\le m,\\
\emp &\text{otherwise}.
\end{cases} 
\end{equation*}
\begin{equation*}
G_i(t):=\begin{cases}
(-\infty,0]&\text{if}\quad t=0,\;1\le i\le s,\\
\R &\text{if}\quad t<0,\;1\le i\leq s,\\
\{0\} &\text{if}\quad t =0,\; s+1\le i\le m,\\
\emp &\text{otherwise}.
\end{cases} 
\end{equation*}
Consequently, the set of multipliers \eqref{Lagrangemultiplier} for problem \eqref{NLPproblem} is represented by
\begin{equation}\label{Lmulti2}
\Lambda(x,v)=\big\{y\in\R_+^s\times\R^{m-s}\;\big|\;v=\nabla_x L(x,y),\;y_i\ph_i(x)=0,\;i=1,\ldots,m\big\}
\end{equation} 
whenever $(x,v)\in\R^n\times\R^n$. If furthermore  MSCQ holds at some $\ox \in \Gamma =\{x\in \R^n\mid g(x)\in \Theta\}$, then for any $x \in \Gamma$ sufficiently close to $\ox$, any $v \in \R^n$, and any $y \in \Lambda(x,v)$, we have 
$$
S(x,y) =- K_\Gamma(x,v-\nabla f(x)),
$$
where the cone $S(x,y)$ is defined by \eqref{coneS}. 
\end{Proposition}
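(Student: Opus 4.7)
My plan is to handle the three assertions sequentially, since each builds on the preceding ones. For the formulas \eqref{normalconeKs} and \eqref{tangentconeKs}, I would invoke the product rule for the normal and tangent cones of a convex set: for $\Theta=\Theta_1\times\cdots\times\Theta_m$ with $\Theta_i$ being either $\R_-$ (for $i\le s$) or $\{0\}$ (for $i>s$), one has $N_\Theta(z)=\prod_i N_{\Theta_i}(z_i)$ and $T_\Theta(z)=\prod_i T_{\Theta_i}(z_i)$. The one-dimensional computations $N_{\R_-}(0)=\R_+$, $N_{\R_-}(t)=\{0\}$ for $t<0$, $N_{\{0\}}(0)=\R$, and the duals for the tangent cone are immediate from convex analysis, yielding exactly the multifunctions $F_i$ and $G_i$.

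For the multiplier description \eqref{Lmulti2}, I would substitute the formula for $N_\Theta(g(x))$ just obtained into the defining inclusion $y\in N_\Theta(g(x))$ appearing in \eqref{Lagrangemultiplier}. Componentwise, this forces $y_i\ge 0$ with $y_i\,\varphi_i(x)=0$ for $i\le s$ (complementary slackness, since $y_i=0$ whenever $\varphi_i(x)<0$ and $\varphi_i(x)=0$ whenever $i\in I(x)$), while $y_i$ is free for $i>s$ with $\varphi_i(x)=0$ automatic by feasibility. The gradient condition $v=\nabla f(x)+\nabla g(x)^*y$ is then rewritten as $v=\nabla_x L(x,y)$ via \eqref{Lagrangefunction}.

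For the third assertion I would proceed by unpacking the definition of the critical cone and negating. Write $\bar v:=v-\nabla f(x)=\nabla g(x)^*y\cdot(-1)$ with a sign I will track carefully; in fact, for $y\in\Lambda(x,v)$ we have $-\bar v=\nabla g(x)^*y$. The key step is to show that under MSCQ (which is robust, so it holds at all $x\in\Gamma$ near $\ox$), the tangent cone $T_\Gamma(x)$ coincides with the linearized cone $\{w\mid\nabla g(x)w\in T_\Theta(g(x))\}$; this is the classical fact that metric subregularity of $x\mapsto g(x)-\Theta$ implies the Abadie-type equality, and it can be extracted either from the distance estimate \eqref{MSCQNLP} applied to $x+tw$ with $t\downarrow 0$, or by citing the corresponding result from variational analysis. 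Combined with the product formula for $T_\Theta(g(x))$, this gives
\[
T_\Gamma(x)=\bigl\{w\in\R^n\bigm|\langle\nabla\varphi_i(x),w\rangle\le 0,\;i\in I(x);\;\langle\nabla\varphi_i(x),w\rangle=0,\;i>s\bigr\}.
\]

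Intersecting with $\{\bar v\}^\perp$ then gives the critical cone. Using $-\bar v=\sum_i y_i\nabla\varphi_i(x)$ and complementary slackness, the orthogonality condition becomes $\sum_{i\in I(x)\cup\{s+1,\dots,m\}}y_i\langle\nabla\varphi_i(x),w\rangle=0$; since the summands for $i>s$ already vanish on $T_\Gamma(x)$ and the summands for $i\in I(x)$ are products of nonnegative $y_i$ and nonpositive inner products, every term must vanish individually. Hence $\langle\nabla\varphi_i(x),w\rangle=0$ exactly on $I_+(x,y)\cup\{s+1,\dots,m\}$, while the sign constraint $\langle\nabla\varphi_i(x),w\rangle\le 0$ is retained on $I(x)\setminus I_+(x,y)$. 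Negating $w$ flips the sign of each inner product and produces precisely the description \eqref{coneS} of $S(x,y)$, finishing the identification $-K_\Gamma(x,v-\nabla f(x))=S(x,y)$. The main obstacle is the tangent cone representation under MSCQ; beyond that, the argument is purely bookkeeping of active indices, signs, and complementary slackness.
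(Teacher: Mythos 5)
Your proposal is correct and follows essentially the same route as the paper's own proof: the product formulas for the normal and tangent cones of $\Theta$, componentwise substitution into \eqref{Lagrangemultiplier} to get \eqref{Lmulti2}, the linearized-cone representation of $T_\Gamma(x)$ under MSCQ (the paper obtains it by citing \cite[Lemma~2.5]{chieuhien}), and the same complementary-slackness and sign bookkeeping to identify $-K_\Gamma(x,v-\nabla f(x))$ with $S(x,y)$. The only blemish is your sign claim ``$-\bar v=\nabla g(x)^*y$''; in fact $\bar v=v-\nabla f(x)=\nabla g(x)^*y$ for $y\in\Lambda(x,v)$, but this is harmless since only the orthogonal complement $\{\bar v\}^{\perp}$ enters the critical cone.
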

\begin{proof} It follows from \cite[Proposition~6.41]{Rockafellar98} and the obvious equality $\Theta=\Theta_1\times\ldots\times\Theta_m$ with
$$
\Theta_i:=\begin{cases}
(-\infty,0]&\text{if }\quad i=1,\ldots,s,\\
\{0\}&\text{if }\quad i=s+1,\ldots,m
\end{cases}
$$
that we have the normal and tangent cone representations
$$
N_\Theta(z)=N_{\Theta_1}(z_1)\times \ldots \times N_{\Theta_m}(z_m) \quad\text{for all }\;z=(z_1,\ldots,z_m)\in\Theta,
$$
$$
T_\Theta(z)=T_{\Theta_1}(z_1)\times \ldots \times T_{\Theta_m}(z_m) \quad\text{for all }\;z=(z_1,\ldots,z_m)\in\Theta.
$$
Since the sets $\Theta_i$ are convex for each $i\in\{1,\ldots,m\}$, it is easy to see that 
$$  
N_{\Theta_i}(t)=F_i(t) \; \text{and } \; T_{\Theta_i}(t) = G_i(t)\;\text{ whenever }\;t\in\R,\;i =1,\ldots,m,
$$
which verifies \eqref{normalconeKs} and \eqref{tangentconeKs}. Consequently, we get formula \eqref{Lmulti2} for the set of multipliers. To proceed further, suppose that MSCQ holds at $\ox \in \Gamma$ and deduce from 
\cite[Lemma~2.5]{chieuhien} that
\begin{align*}
T_\Gamma (x)& =  \big\{w \in \R^n\;\big|\; \nabla g(x) w \in T_\Theta(g(x))\big\}\\
&=\left\{w\in\R^n\;\bigg|\; \begin{cases}
\langle\nabla\ph_i(x),w\rangle=0&\text{if }\;i\in\big\{s+1,\ldots,m\big\},\\
\langle\nabla\ph_i(x),w\rangle\leq  0 & \text{if }\;i\in I(x) 
\end{cases} \right\}\;\mbox{ and}
\end{align*}
\begin{align*}
\big(v- \nabla f(x)\big)^\perp &=\big\{w\in\R^n\;\big|\; \langle w, v - \nabla f(x) \rangle =0\big\}\\
& =\big\{w\in\R^n\;\big|\;\langle w, \nabla_x L(x,y)-\nabla f(x) \rangle = 0\big\} \\
&= \left\{w \in \R^n\bigg|\; \left\langle \sum_{i=1}^m y_i \nabla \varphi_i(x), w \right\rangle = 0 \right\}\\
&= \left\{w \in \R^n\bigg|\;\sum_{i=1}^m y_i \left\langle    \nabla \varphi_i(x), w \right\rangle = 0 \right\}
\end{align*}
whenever $x \in \Gamma$ sufficiently close to $\ox$, and where $y \in \Lambda(x,v)$ as $v\in\R^n$. Therefore, for such vectors $x$, $v$, and $y$ we obtain the  following representations  
\begin{align*}
 K_\Gamma(x,v-\nabla f(x)) &  = T_\Gamma(x) \cap (v-\nabla f(x))^\perp \\
 &=  \left\{w\in\R^n\;\bigg|\; \begin{cases}
\langle\nabla\ph_i(x),w\rangle=0 \quad \text{if }\;i\in\big\{s+1,\ldots,m\big\},\\
\langle\nabla\ph_i(x),w\rangle\leq  0 \quad  \text{if }\;i\in I(x),\\
\sum_{i=1}^m y_i \left\langle    \nabla \varphi_i(x), w \right\rangle = 0
\end{cases} \right\}\\
&= \left\{w\in\R^n\;\bigg|\; \begin{cases}
\langle\nabla\ph_i(x),w\rangle=0 \quad \text{if }\;i\in\big\{s+1,\ldots,m\big\},\\
\langle\nabla\ph_i(x),w\rangle\leq  0 \quad  \text{if }\;i\in I(x),\\
\sum_{i\in I(x)} y_i \left\langle    \nabla \varphi_i(x), w \right\rangle + \sum_{i\notin I(x)} y_i \left\langle    \nabla \varphi_i(x), w \right\rangle= 0
\end{cases} \right\}\\
&= \left\{w\in\R^n\;\bigg|\; \begin{cases}
\langle\nabla\ph_i(x),w\rangle=0 \quad \text{if }\;i\in\big\{s+1,\ldots,m\big\},\\
\langle\nabla\ph_i(x),w\rangle\leq  0 \quad  \text{if }\;i\in I(x),\\
\sum_{i\in I(x)} y_i \left\langle    \nabla \varphi_i(x), w \right\rangle = 0
\end{cases} \right\}\\
&= \left\{w\in\R^n\;\bigg|\; \begin{cases}
\langle\nabla\ph_i(x),w\rangle=0 \quad \text{if }\;i\in\big\{s+1,\ldots,m\big\},\\
\langle\nabla\ph_i(x),w\rangle\leq  0 \quad  \text{if }\;i\in I(x),\\
 y_i \left\langle\nabla \varphi_i(x), w \right\rangle = 0 \quad  \text{if } i \in I(x)
\end{cases} \right\} \\
&= \left\{w\in\R^n\;\bigg|\; \begin{cases}
\langle\nabla\ph_i(x),w\rangle=0&\text{if }\;i\in\big\{s+1,\ldots,m\big\}\cup I_+(x,y),\\
\langle\nabla\ph_i(x),w\rangle\leq  0 & \text{if }\;i\in I(x)\setminus I_+(x,y) 
\end{cases} \right\}
\end{align*}
and therefore complete the proof of the proposition. 
\end{proof}

Having the above calculations in hand, we arrive at the {\em pointbased second-order necessary optimality conditions} for local and strong local minimizers of NLPs.

\begin{Theorem}\label{necforNLP} Consider the optimization problem \eqref{NLPproblem}, and let $\ox$ be a local minimizer of \eqref{NLPproblem}. If MSCQ \eqref{MSCQNLP} holds at $\ox$, then $\ox$ satisfies the first-order optimality condition
\begin{equation}\label{1stKKT}
\nabla_x L(\ox,\oy)=0\;\text{ and }\;\langle\oy,g(\ox)\rangle=0\;\text{ for some }\;\bar{y}\in\R_+^s\times\R^{m-s}.
\end{equation}
If in addition $\oy$ in \eqref{1stKKT} is unique $($in particular, when LICQ \eqref{linearICQ} is satisfied$)$, then 
\begin{equation}\label{necescompcondiNLP}
 \langle z, w\rangle +  \langle w, \nabla^2 \langle \oy, g\rangle (\ox)w\rangle \geq 0
\end{equation}
for all $w \in S(\ox,\oy)$  
and $z \in \breve{\partial}^2 f(\ox)(w)$.  If $\ox$ is a strong local minimizer of \eqref{NLPproblem} with modulus $\sigma>0$, then we have the condition
\begin{equation}\label{necescompcondiNLPII}
 \langle z, w\rangle +  \langle w, \nabla^2 \langle \oy, g\rangle (\ox)w\rangle \geq \sigma\|w\|^2
\end{equation}
for all    $w \in S(\ox,\oy)$  
and $z \in \breve{\partial}^2 f(\ox)(w)$. 
\end{Theorem}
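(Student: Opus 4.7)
The strategy is to view the NLP \eqref{NLPproblem} as an instance of the abstract constrained problem \eqref{composite} with $g=(\varphi_1,\ldots,\varphi_m)$ and $\Theta:=\R_-^s\times\{0_{m-s}\}$, and then to translate the abstract necessary conditions of Theorem~\ref{necforconstrained} into the explicit NLP format via the calculations of Proposition~\ref{calNLPex}. The chosen $\Theta$ is a convex polyhedron, $f$ is of class ${\cal C}^{1,1}$, and $g$ is ${\cal C}^2$-smooth around $\ox$, so all standing assumptions of Theorem~\ref{necforconstrained} are in force. The assumed MSCQ condition \eqref{MSCQNLP} is exactly the NLP-form of MSCQ \eqref{MSCQ} for the mapping $x\mapsto g(x)-\Theta$.

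First I would apply Theorem~\ref{necforconstrained} to obtain the abstract first-order condition \eqref{1stcond}: there exists $\oy\in N_\Theta(g(\ox))$ with $\nabla f(\ox)+\nabla g(\ox)^*\oy=0$. Proposition~\ref{calNLPex} gives an explicit description of $N_\Theta(g(\ox))$ (equivalently, of the multiplier set $\Lambda(\ox,0)$ via \eqref{Lmulti2}), which forces $\oy\in\R_+^s\times\R^{m-s}$ together with the complementarity conditions $\oy_i\varphi_i(\ox)=0$ for $i=1,\ldots,s$ (and hence $\langle\oy,g(\ox)\rangle=0$). The equation $\nabla f(\ox)+\nabla g(\ox)^*\oy=0$ is exactly $\nabla_x L(\ox,\oy)=0$, yielding \eqref{1stKKT}.

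Next, assume $\oy$ in \eqref{1stKKT} is unique. Theorem~\ref{necforconstrained} directly yields
\begin{equation*}
\langle z,w\rangle+\langle w,\nabla^2\langle\oy,g\rangle(\ox)w\rangle\ge 0
\end{equation*}
for all $w\in-K_\Gamma(\ox,-\nabla f(\ox))$ and $z\in\breve\partial^2 f(\ox)(w)$. To convert this into the form \eqref{necescompcondiNLP}, I would invoke the second conclusion of Proposition~\ref{calNLPex}, which identifies the polar form of the critical cone precisely as $-K_\Gamma(\ox,-\nabla f(\ox))=S(\ox,\oy)$, where $S(\ox,\oy)$ is defined via the active index set $I(\ox)$ and the strict complementarity set $I_+(\ox,\oy)$ in \eqref{index}--\eqref{coneS}. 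This gives \eqref{necescompcondiNLP} verbatim. To address the parenthetical LICQ case, I would recall (as noted in the discussion preceding the theorem) that LICQ \eqref{linearICQ} corresponds exactly to SOCQ \eqref{SOCQ} in this NLP setting, which in turn guarantees uniqueness of the multiplier $\oy$ by \cite[Theorem~4.3]{mor-roc} and hence places us in the hypothesis just verified.

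Finally, for the strong local minimizer case with modulus $\sigma>0$, I would apply the strong-minimizer part of Theorem~\ref{necforconstrained} instead, yielding
\begin{equation*}
\langle z,w\rangle+\langle w,\nabla^2\langle\oy,g\rangle(\ox)w\rangle\ge\sigma\|w\|^2
\end{equation*}
for the same range of $w$ and $z$, and the same critical-cone identification $-K_\Gamma(\ox,-\nabla f(\ox))=S(\ox,\oy)$ converts this into \eqref{necescompcondiNLPII}. The only nonroutine point in the whole argument is matching the abstract polar-critical-cone condition with the explicit cone $S(\ox,\oy)$, and this matching is precisely the content of Proposition~\ref{calNLPex}; once it is invoked, all three assertions follow by direct specialization.
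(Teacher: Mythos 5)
Your proposal is correct and follows exactly the paper's route: the paper's own proof is precisely the one-line specialization of Theorem~\ref{necforconstrained} (first-order condition, condition \eqref{necescompcondi}, and its strong-minimizer version \eqref{necescompcondiII}) to $\Theta=\R_-^s\times\{0_{m-s}\}$ combined with the identifications of Proposition~\ref{calNLPex}, namely the multiplier description \eqref{Lmulti2} giving \eqref{1stKKT} and the identity $S(\ox,\oy)=-K_\Gamma(\ox,-\nabla f(\ox))$ giving \eqref{necescompcondiNLP} and \eqref{necescompcondiNLPII}. Your additional remark on the LICQ case (LICQ $\Leftrightarrow$ SOCQ, hence uniqueness of $\oy$) matches the discussion preceding the theorem, so no gaps remain.
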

\begin{proof} We immediately obtain \eqref{necescompcondiNLP} and \eqref{necescompcondiNLPII} from Theorem \ref{necforconstrained} and Proposition \ref{calNLPex}.
\end{proof} 

The next theorem provides new {\em neighborhood second-order sufficient optimality conditions} for local and strong local minimizers in the classical NLP setting with ${\cal C}^2$-smooth data under the weakest MSCQ. We actually get more by establishing {\em variational} and {\em strong variational convexity} of the extended-real-valued function associated with \eqref{NLPproblem}.

\begin{Theorem}\label{sufforNLP} Consider the optimization problem \eqref{NLPproblem} in which all the functions $f$ and $\ph_i$ are $\mathcal{C}^2$-smooth around a feasible solution $\ox$ satisfying the first-order optimality condition  \eqref{1stKKT}.
Supposing that MSCQ holds at $\ox$, we have the following assertions:

{\bf(i)} If there exist neighborhoods $U$ of $\ox$ and $V$ of $0$ such that 
\begin{equation}\label{suffcompcondiNLP}
 \langle \nabla^2 f(x)w, w\rangle +  \langle w, \nabla^2 \langle y, g\rangle (x)w\rangle \geq 0 \; \text{ whenever }\;x \in U,\;v \in V   \; \text{ and }\;w \in S(x,y),
\end{equation}
where $y \in \R^s_+ \times \R^{m-s}$ is a solution to the equations $v=\nabla_x L(x,y)$ and $\langle y, g(x)\rangle=0$, then the function $f+ \delta_\Gamma$ is variationally convex at $\ox$ for $0$. Consequently, $\ox$ is a local minimizer of \eqref{NLPproblem}.

{\bf(ii)} If there exist $\sigma >0$, neighborhoods $U$ of $\ox$ and $V$ of $0$ such that 
\begin{equation}\label{suffcompcondiNLPII}
\langle \nabla^2 f(x)w, w\rangle +  \langle w, \nabla^2 \langle y, g\rangle (x)w\rangle \geq \sigma\|w\|^2 \; \text{ whenever }\; x \in U,\;v \in V,  \; \text{ and }\;w \in S(x,y),
\end{equation}
where $y \in \R^s_+ \times \R^{m-s}$ is a solution to the equations $v=\nabla_x L(x,y)$ and $\langle y, g(x)\rangle=0$, then the function $f+ \delta_\Gamma$ is strongly variationally convex at $\ox$ for $0$ with modulus $\sigma$. Consequently, $\ox$ is a strong local $($in fact tilt-stable$)$ minimizer of \eqref{NLPproblem} with modulus $\sigma^{-1}$.
\end{Theorem}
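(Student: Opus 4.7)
The plan is to view Theorem~\ref{sufforNLP} as a direct specialization of Theorem~\ref{suff2ndconstraint} to the NLP setting \eqref{NLPproblem}, using Proposition~\ref{calNLPex} to translate the abstract critical cone and multiplier set into the gradient-based objects $S(x,y)$ and the KKT-multiplier set \eqref{Lmulti2}. No new analytic machinery is needed; the work is bookkeeping.

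First, I would observe that \eqref{NLPproblem} fits the format \eqref{composite} by taking $g=(\varphi_1,\ldots,\varphi_m)$ and $\Theta=\R_-^s\times\{0_{m-s}\}$, which is a convex polyhedron. The smoothness hypotheses transfer verbatim ($f$ being $\mathcal{C}^2$ in a neighborhood of $\ox$ is in particular $\mathcal{C}^{1,1}$), the MSCQ \eqref{MSCQNLP} is exactly \eqref{MSCQ} for this data, and the KKT condition \eqref{1stKKT} is precisely the first-order condition \eqref{1stcond} once \eqref{normalconeKs} is invoked.

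Second, I would apply Proposition~\ref{calNLPex}. It supplies two crucial identifications valid for every feasible $x$ close to $\ox$, every $v\in\R^n$, and every admissible multiplier $y$: the multiplier set $\Lambda(x,v)$ from \eqref{Lagrangemultiplier} coincides with the KKT description \eqref{Lmulti2}, and one has the cone equality
\begin{equation*}
-K_\Gamma\bigl(x,\,v-\nabla f(x)\bigr)=S(x,y).
\end{equation*}
Under these identifications, the hypothesis \eqref{suffcompcondiNLP} is exactly \eqref{sufficient2nd}, and \eqref{suffcompcondiNLPII} is exactly \eqref{sufficient2ndII}. (One should note here that MSCQ is robust, so after possibly shrinking $U$ it persists at every $x\in U\cap\Gamma$, which is what allows Proposition~\ref{calNLPex} to be applied throughout a neighborhood rather than only at $\ox$.)

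Finally, I would invoke Theorem~\ref{suff2ndconstraint}: part~(i) yields variational convexity of $f+\delta_\Gamma$ at $\ox$ for $0$, which by Definition~\ref{vr} forces $\ox$ to be a local minimizer of $f+\delta_\Gamma$ and hence of \eqref{NLPproblem}; part~(ii) yields strong variational convexity with modulus $\sigma$, and then Proposition~\ref{equitiltstr} delivers tilt stability with modulus $\sigma^{-1}$, which in particular entails the strong local minimum with the same modulus. I do not foresee any genuine obstacle; the only point requiring care is consistency of the various neighborhoods (the neighborhoods $U,V$ in the hypothesis, the neighborhood on which MSCQ and the cone identification of Proposition~\ref{calNLPex} hold, and the neighborhood from Theorem~\ref{suff2ndconstraint}), which is resolved by shrinking to their common intersection.
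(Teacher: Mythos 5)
Your proposal is correct and follows exactly the paper's route: the paper proves this theorem in one line by combining Theorem~\ref{suff2ndconstraint} with Proposition~\ref{calNLPex}, and your write-up simply fills in the same bookkeeping (the identification $S(x,y)=-K_\Gamma(x,v-\nabla f(x))$, the multiplier set \eqref{Lmulti2}, the robustness of MSCQ, and the final appeal to Proposition~\ref{equitiltstr} for tilt stability). No gaps.
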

\begin{proof}
It follows immediately from Theorem~\ref{suff2ndconstraint} and Proposition~\ref{calNLPex}.
\end{proof}

The next example illustrates our newly obtained second-order sufficient optimality condition in which the results in 
Theorem~\ref{sufforNLP} can be applied, while the sufficient conditions from \cite[Theorem~4.1]{anyenxu23} and
\cite[Corollary~7.4]{kmp22convex}  cannot be used.

\begin{Example}\label{exa:suf}
\rm Consider the optimization problem  taken from Example~\ref{nlpex}. This problem is the NLP class being equivalently written in the form
\begin{equation}\label{NLPexam}
\min\quad f(x_1,x_2):=x_1^2-x_2^2\;\text{ subject to }\;
\begin{cases}
\varphi_1(x_1,x_2):=-x_1+x_2 &\le 0, \\
\varphi_2(x_1,x_2):=-x_1-x_2 &\leq 0,\\ 
\varphi_3(x_1,x_2):=x_2 &=0. 
\end{cases} 
\end{equation}
The Lagrangian function $L:\R^2 \times \R^3\to \R$ is given here by
$$
L(x_1,x_2,y_1,y_2,y_3)= x_1^2-x_2^2 + y_1(-x_1+x_2)+ y_2(-x_1-x_2)+y_3 x_2
$$
for all $(x_1,x_2,y_1,y_2,y_3) \in \R^5$. It is easy to check that the reference point $\ox:=(0,0)$ satisfies the first-order optimality condition \eqref{1stKKT}. 
Since LICQ fails at $\ox:=(0,0)$, we cannot apply the sufficient conditions from \cite[Theorem~4.1]{anyenxu23} and \cite[Corollary~7.4]{kmp22convex}, while Example~\ref{nlpex} shows that MSCQ is satisfied at any point of the plane. Considering the corresponding mapping
$$
g(x_1,x_2):= \big(\varphi_1(x_1,x_2), \varphi_2(x_1,x_2), \varphi_3(x_1,x_2)\big)\;\text{ on }\;\R^2, 
$$
we see that whenever $x=(x_1,x_2) \in \R^2$ and $y=(y_1,y_2,y_3) \in \R^3$, the inclusion $w:=(w_1,w_2) \in S(x,y)$ implies that $w_2 =0$. Therefore, we arrive at the condition
$$
\langle \nabla^2 f(x)w, w\rangle +\langle w, \nabla^2 \langle y, g\rangle (x) w\rangle = 2w_1^2 -2w_2^2 = 2w_1^2 \geq 0,
$$
which gives us \eqref{suffcompcondiNLP}. Hence $\ox$ is a local minimizer of \eqref{NLPexam}. 
\end{Example}

Finally in this section, we present the following consequence of Theorem~\ref{sufforNLP} providing the {\em pointbased} condition for {\em strong variational convexity} and {\em sufficiency} of strong (even tilt-stable) local minimizers in \eqref{NLPproblem} under the fulfillment of PLICQ/MFCQ at the reference point. Note that this sufficient condition for tilt-stable minimizers was previously obtained for NLPs with only inequality constraints by using a completely different technique not based on second-order subdifferentials.

\begin{Corollary} Consider NLP \eqref{NLPproblem} in which all the functions are  $\mathcal{C}^2$-smooth around a feasible solution $\ox$ satisfying the first-order optimality condition  \eqref{1stKKT}.
Assume that PLICQ holds at $\ox$ and that the strong second-order sufficient condition $($SSOSC) is satisfied, i.e., for all $\oy$  from \eqref{1stKKT} we have 
\begin{equation}\label{pointPDNLP}
\langle \nabla^2 f(\ox)w, w\rangle +  \langle w, \nabla^2 \langle \oy, g\rangle (\ox)w\rangle >0 \; \text{ if }\;\langle \nabla \varphi_i(\ox),w\rangle =0 \; \text{for }\;i \in\{s+1,\ldots,m\}\cup I_{+}(\ox,\oy). 
\end{equation}
Then the function $f+ \delta_\Gamma$ is strongly variationally convex at $\ox$ for $0$. 
Consequently, $\ox$ is a tilt-stable $($hence strong$)$ local minimizer of \eqref{NLPproblem} with some modulus $\kappa>0$. 
\end{Corollary}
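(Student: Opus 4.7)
The plan is to derive the corollary directly from Theorem~\ref{sufforNLP}(ii). The constraint qualification hypothesis is immediate: by the discussion surrounding \eqref{linearPICQ}, PLICQ coincides with FOCQ~\eqref{1stqualifyconcompos}, which by Remark~\ref{CQdiscuss}(ii) implies MSCQ at $\ox$. What remains is to promote the pointbased SSOSC~\eqref{pointPDNLP} to the neighborhood condition~\eqref{suffcompcondiNLPII} for some $\sigma>0$ and some neighborhoods $U$ of $\ox$ and $V$ of $0$; once this is done, Theorem~\ref{sufforNLP}(ii) produces both strong variational convexity at $\ox$ for $0$ and tilt stability of $\ox$ with modulus $\sigma^{-1}$.

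I would argue by contradiction. Suppose \eqref{suffcompcondiNLPII} fails for every choice of $\sigma,U,V$. Exploiting that $S(x,y)$ is a cone, I extract sequences $x_k\to\ox$, $v_k\to 0$, $y_k\in\Lambda(x_k,v_k)$ (with $\Lambda$ from Proposition~\ref{calNLPex}), and unit vectors $w_k\in S(x_k,y_k)$ satisfying
\[
\langle\nabla^2 f(x_k)w_k,w_k\rangle+\langle w_k,\nabla^2\langle y_k,g\rangle(x_k)w_k\rangle<\tfrac{1}{k}.
\]
Since PLICQ is equivalent to FOCQ, Proposition~\ref{local-closed} makes $\Lambda$ locally bounded around $(\ox,0)$, so $\{y_k\}$ is bounded. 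Passing to a subsequence, $(y_k,w_k)\to(\tilde y,\tilde w)$ with $\|\tilde w\|=1$, and the closedness of $\gph\Lambda$ (inherited from the continuity of $\nabla_x L$ and $g$ together with the closedness of the sign and complementarity conditions in \eqref{Lmulti2}) yields $\tilde y\in\Lambda(\ox,0)$.

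The key technical step---and the main expected obstacle---is to verify that $\tilde w$ satisfies the hypothesis of SSOSC at $(\ox,\tilde y)$, i.e., $\langle\nabla\varphi_i(\ox),\tilde w\rangle=0$ for every $i\in\{s+1,\dots,m\}\cup I_+(\ox,\tilde y)$. For equality indices this is a direct passage to the limit in $\langle\nabla\varphi_i(x_k),w_k\rangle=0$. For $i\in I_+(\ox,\tilde y)$ I would use that $\tilde y_i>0$ forces $y_k^i>0$ for all large $k$; complementary slackness $y_k^i\varphi_i(x_k)=0$ then gives $\varphi_i(x_k)=0$, so $i\in I(x_k)$ and in fact $i\in I_+(x_k,y_k)$, whence $\langle\nabla\varphi_i(x_k),w_k\rangle=0$ already and the limit condition follows. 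Invoking \eqref{pointPDNLP} at $(\ox,\tilde y)$ then produces a strictly positive value of the Lagrangian quadratic form at $\tilde w$, contradicting the nonpositive limit of the displayed inequality.

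Hence \eqref{suffcompcondiNLPII} holds with some $\sigma>0$, and Theorem~\ref{sufforNLP}(ii) concludes the proof. The novelty relative to prior inequality-only arguments is that all second-order information has been encapsulated in Theorem~\ref{sufforNLP}, so the work reduces to the compactness/continuity step above; its only subtlety is the combinatorial tracking of the active-index sets $I(x_k)$ and $I_+(x_k,y_k)$ along the approximating sequence, which is controlled via complementary slackness as indicated.
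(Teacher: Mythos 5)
Your proposal is correct and follows essentially the same route as the paper's own proof: a contradiction argument with normalized critical directions, local boundedness of the multiplier mapping from Proposition~\ref{local-closed} under PLICQ$\;=\;$FOCQ, the two-case limit analysis (equality indices versus $I_+$ indices via complementary slackness) to place the limiting direction in the SSOSC cone, and then Theorem~\ref{sufforNLP}(ii). No gaps to report.
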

\begin{proof} Assuming that \eqref{pointPDNLP} is satisfied, we need to show that there exist $\sigma>0$, neighborhoods $U$ of $\ox$ and $V$ of $0$ such that \eqref{suffcompcondiNLPII} holds. Suppose the contrary and find sequences $x_k\to \ox$, $v_k\to 0$, $\sigma_k \to 0$ as $k \to \infty$ together with $w_k \in S(x_k,y_k)$ and $y_k \in \Lambda(x_k,v_k)$ such that 
\begin{equation}\label{pc1}
\langle \nabla^2 f(x_k)w_k, w_k\rangle + \langle w_k, \nabla^2 \langle y_k, g\rangle (x_k)w_k\rangle < \sigma_k \|w_k\|^2\;\mbox{ for all }\;k\in\N, 
\end{equation}
which implies that $w_k \ne 0$. Defining $\hat{w}_k:=w_k/\|w_k\|$ gives us $\hat{w}_k \in S(x_k,y_k)$ since $S(x_k,y_k)$ is a cone, and $\|\hat{w}_k\|=1$ for all $k \in \N$. These allow us to rewrite \eqref{pc1} in the form 
\begin{equation}\label{pc2}
\langle \nabla^2 f(x_k)\hat{w}_k, \hat{w}_k\rangle + \langle \hat{w}_k, \nabla^2 \langle y_k, g\rangle (x_k)\hat{w}_k\rangle < \sigma_k, \quad k \in \N. 
\end{equation}
We have without loss of generality that $\hat{w}_k \to w$ as $k \to \infty$ with some  $w \ne 0$. It follows from 
\cite[Lemma~7.2]{kmp22convex} that PLICQ is equivalent to FOCQ \eqref{1stqualifyconcompos}. Thus the set-valued mapping $\Lambda$ from \eqref{Lagrangemultiplier} is locally bounded around $(\ox,0)$ by Proposition~\ref{local-closed}, which yields the boundedness of the sequence $\{y_k\}$. Again without loss of generality, suppose that $y_k \to \oy$ as $k \to \infty$ and conclude that $\oy \in \Lambda(\ox,0)$ due to the closedness of the graphical set $\gph \Lambda$. 

To check further the conditions   
\begin{equation}\label{iintwoset}
\langle \nabla \varphi_i(\ox),w\rangle =0 \;\text{ for }\;i \in\{s+1,\ldots,m\}\cup I_{+}(\ox,\oy), 
\end{equation} 
fix $i \in \{s+1,\ldots,m\} \cup I_{+}(\ox,\oy)$ and consider the following two cases: 

\textbf{Case~1:} $i \in \{s+1,\ldots,m\}$. Since $\hat{w}_k \in S(x_k,y_k)$ for all $k\in \N$, we have 
$$
\langle \nabla \varphi_i(x_k), \hat{w}_k\rangle =0\; \text{ for all }\; k \in \N. 
$$
Passing there to the limit as $k \to \infty$ brings us to $\langle \nabla \varphi_i(\ox), w\rangle =0$  for all $i \in \{s+1,\ldots,m\}$.

{\bf Case~2:} $i\in  I_+(\ox,\oy)$. In this case we have $\varphi_i(\ox)=0$ and $\oy_i >0$. Since $y_k \to \oy$, it follows that $(y_k)_i >0$ for sufficiently large $k\in \N$. Combining the latter with $(y_k)_i\varphi_i(x_k) =0$ tells us that $\varphi_i(x_k) =0$ and hence $i \in I_{+} (x_k,y_k)$ for all large $k$. This yields 
$$
\langle \nabla \varphi_i(x_k), \hat{w}_k\rangle =0\;\text{ for sufficiently large }\; k \in \N
$$
as $\hat{w}_k \in S(x_k,y_k)$. Letting $k \to \infty$ in the above equalities brings us to $\langle \nabla \varphi_i(\ox), w\rangle =0$, which justifies \eqref{iintwoset}. Passing now to the limit as $k \to \infty$ in \eqref{pc2}, we get 
$$
\langle \nabla^2 f(\ox)w, w\rangle + \langle w, \nabla^2 \langle \oy, g\rangle (\ox)w\rangle \leq 0, 
$$
which being combined with \eqref{iintwoset} contradicts  \eqref{pointPDNLP}. Thus \eqref{suffcompcondiNLPII} holds for some $\sigma>0$. Employing Theorem~\ref{sufforNLP} confirms that $f+\delta_\Gamma$ is strongly variationally convex at $\ox$ for $0$, which implies by Proposition~\ref{equitiltstr} that $\ox$ is a tilt-stable local minimizer and thus completes the proof of the corollary. 
\end{proof}\vspace*{-0.2in}

\section{Concluding Remarks}\label{sec:conclusion}\vspace*{-0.05in}

This paper establishes new second-order necessary and sufficient optimality conditions for both unconstrained and constrained optimization problems in finite-dimensional spaces that utilize second-order subdifferentials of extended-real-valued prox-regular functions. The obtained results resolve, in particular, some challenging open questions formulated in the recent paper \cite[Section~5]{anyenxu23}. 

Our future research will address developing the second-order subdifferential approach of this paper to deriving second-order necessary and sufficient optimality conditions for nonpolyhedral problems of conic programming and extended nonlinear programming expressed entirely in terms of the given data under the weakest constraint qualifications. We also aim at applying the obtained optimality conditions and associated results on variational convexity and variational sufficiency to the design and justification of advanced second-order numerical algorithms.

\section*{Acknowledgments}
The authors are grateful to both anonymous referees for various helpful remarks and suggestions that allowed us to improve the original presentation.

\small 

\end{document}